\documentclass{aic}

\aicAUTHORdetails{%
  title = {Local Girth Choosability of Planar Graphs}, 
  author = {Luke Postle and Evelyne Smith-Roberge},
  plaintextauthor = {Luke Postle, Evelyne Smith-Roberge},
    %
    %
    %
    %
    %
   %
  keywords = {planar graphs, list colouring, local, girth},
}   

\aicEDITORdetails{%
   year={2022},
   number={8},
   received={19 November 2021},   
   published={16 December 2022},  
   doi={10.19086/aic.2022.8},      
}   

\usepackage{amsmath,amssymb,amstext}
\usepackage{amsthm}
\usepackage{enumerate}
\usepackage{tikz}
\usepackage{thmtools}
\usepackage{thm-restate}
\usetikzlibrary{shapes}

\newtheorem{thm}{Theorem}[section]

\newtheorem{lemma}[thm]{Lemma}

\newtheorem{cor}[thm]{Corollary}
\newtheorem{claim}{Claim}

\newtheorem{obs}{Observation}

\numberwithin{equation}{section}
\theoremstyle{definition} 
\newtheorem{definition}[thm]{Definition}

\newcommand{\g}{g}
\newcommand{\Int}{\textnormal{Int}}


\begin{document}

\begin{frontmatter}[classification=text]

\title{Local Girth Choosability of Planar Graphs} 

\author[lpos]{Luke Postle \thanks{$^{,\dagger}$We acknowledge the support of the Natural Sciences and Engineering Research Council of Canada (NSERC), $^*$[Discovery Grant No.  2019-04304], $^\dagger$[CGSD3 Grant No. 2020-547516]. \newline
\hphantom{|||||} $ ^{*,\dagger}$Cette recherche a \'{e}t\'{e} financ\'{e}e par le Conseil de recherches en sciences naturelles et en g\'{e}nie du Canada (CRSNG), $^*$[Discovery Grant No.  2019-04304], $^\dagger$[CGSD3 Grant No. 2020-547516].} }
\author[esr]{Evelyne Smith-Roberge~$^\dagger$}


\begin{abstract}
In 1994, Thomassen famously proved that every planar graph is 5-choosable, resolving a conjecture initially posed by Vizing and, independently, Erd\H{o}s, Rubin, and Taylor in the 1970s. Later, Thomassen proved that every planar graph of girth at least five is 3-choosable. In this paper, we introduce the concept of a \emph{local girth list assignment}: a list assignment wherein the list size of a vertex depends not on the girth of the graph, but rather on the length of the shortest cycle in which the vertex is contained. We give a local list colouring theorem unifying the two theorems of Thomassen mentioned above. In particular, we show that if $G$ is a planar graph and $L$ is a list assignment for $G$ such that $|L(v)| \geq 3$ for all $v \in V(G)$; $|L(v)| \geq 4$ for every vertex $v$ contained in a 4-cycle; and $|L(v)| \geq 5$ for every $v$ contained in a triangle, then $G$ admits an $L$-colouring.  
\end{abstract}
\end{frontmatter}

\section{Introduction}


\subsection{Context and Main Result}
A \emph{colouring} of a graph $G$ is a function $\phi: V(G) \rightarrow C$ that assigns to each vertex $v$ of $G$ a \emph{colour} $\phi(v) \in C$ such that for every edge $uv \in E(G)$, $\phi(u) \neq \phi(v)$. If $|C| \leq k$, $\phi$ is called a \emph{$k$-colouring}; and if $G$ has a $k$-colouring, we say it is \emph{$k$-colourable}.

In 1976,  Appel and Haken \cite{kenneth1977every, appel1977every} proved the following result, settling a conjecture over a century old. It is arguably the most famous theorem in the field of graph colouring.

\begin{thm}[Four Colour Theorem \cite{kenneth1977every, appel1977every}]
Every planar graph is 4-colourable.
\end{thm}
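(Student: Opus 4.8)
The plan is to follow the discharging-and-reducibility strategy pioneered by Appel and Haken (and later streamlined by Robertson, Sanders, Seymour, and Thomas). Suppose for contradiction that the theorem fails, and let $G$ be a counterexample minimizing $|V(G)|$. First I would reduce to the case that $G$ is a \emph{triangulation}: if some face of a plane embedding of $G$ is not a triangle one can add an edge across it without creating a crossing and without changing $|V(G)|$, and any $4$-colouring of the augmented graph restricts to one of $G$; so by minimality every face of $G$ is bounded by a triangle. Next, a short Kempe-chain argument rules out vertices of degree at most $4$: a vertex $v$ of degree $\le 3$ is trivially coloured last, and for $\deg(v) = 4$ one deletes $v$, $4$-colours $G - v$ by minimality, and if all four colours appear around $v$ one performs a Kempe interchange on a two-coloured subgraph to free a colour. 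Hence $G$ has minimum degree $5$. Finally I would set up the charging: by Euler's formula $|V| - |E| + |F| = 2$ and the identity $2|E| = 3|F|$ for a triangulation, assigning to each vertex $v$ the charge $\mu(v) := 6 - \deg(v)$ gives $\sum_{v} \mu(v) = 12 > 0$.

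The core of the argument is to produce a finite set $\mathcal{U}$ of \emph{configurations} — small plane near-triangulations together with prescribed degree constraints on their vertices — satisfying two properties. The first is \textbf{unavoidability}: every planar triangulation of minimum degree $5$ contains a member of $\mathcal{U}$. I would prove this by designing a finite list of local \emph{discharging rules} that move charge between nearby vertices while preserving the total $12$; one then argues that if $G$ contains no configuration from $\mathcal{U}$, the rules force the final charge $\mu'(v) \le 0$ at every vertex, contradicting $\sum_v \mu'(v) = 12$. The second property is \textbf{reducibility}: for each $C \in \mathcal{U}$, whenever a triangulation contains $C$ one can delete or identify a bounded number of vertices to get a strictly smaller planar graph, $4$-colour it by minimality, and then extend the colouring back across $C$ — if necessary after a sequence of Kempe interchanges performed on the ring of vertices surrounding $C$. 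Any reducible configuration appearing in $G$ thus contradicts the minimality of $G$; combined with unavoidability this produces the desired contradiction and proves the theorem.

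The step I expect to be the genuine obstacle — and the reason no hand proof is known — is carrying out these two verifications, because $\mathcal{U}$ is enormous: Appel and Haken's unavoidable set had on the order of $1{,}500$--$2{,}000$ configurations, and the Robertson--Sanders--Seymour--Thomas reworking still needs $633$ reducible configurations together with $32$ discharging rules. For each configuration the reducibility test amounts to enumerating all $4$-colourings of its ring and checking, via all admissible Kempe chains, whether every ring-colouring can be extended inward; this case analysis is far too large for a closed-form argument and must be done by machine. Likewise, tuning the discharging rules so that \emph{no} local configuration outside $\mathcal{U}$ survives with positive final charge requires delicate bookkeeping and computer verification. The role of a human proof here is to build the framework — the triangulation reduction, the charge $6 - \deg(v)$, and the Kempe-chain toolkit — and to organise the finite case analysis so that the computational parts can be executed and independently checked.
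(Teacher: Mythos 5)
This statement is not proved in the paper at all: it is quoted as a classical result and attributed to Appel and Haken by citation, so there is no internal argument to compare yours against. Your outline accurately reproduces the strategy of the cited proof \textemdash the reduction to triangulations, the Kempe-chain elimination of vertices of degree at most $4$ (correct, since Kempe's argument only breaks at degree $5$), the charge $6-\deg(v)$ summing to $12$ via Euler's formula, and the unavoidability/reducibility dichotomy with Kempe interchanges on the ring.

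As a proof, however, what you have written is a plan rather than an argument, and the gap is exactly where you say it is: you never exhibit the unavoidable set $\mathcal{U}$, the discharging rules, or the reducibility verifications, and those constitute essentially the entire mathematical content of the theorem. ``One can design a finite list of discharging rules such that\dots'' and ``for each $C\in\mathcal{U}$ one can extend the colouring after Kempe interchanges'' are precisely the claims that took Appel--Haken (and later Robertson--Sanders--Seymour--Thomas) thousands of machine-checked cases to establish; asserting their existence is not a proof of them. So the proposal should be read as a correct description of how the cited proof goes \textemdash which is all the paper itself relies on \textemdash but it cannot stand alone as a demonstration of the statement. If your intent was only to indicate why the result is taken as a black box (as this paper does), that is the right conclusion; if it was to supply a proof, the finite-but-enormous case analysis is missing and cannot be waved through.
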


The Four Colour Theorem is of course tight: there exist planar graphs that are not 3-colourable (for instance, $K_4$). However, in 1959 Gr\"{o}tzsch \cite{grotzsch1959dreifarbensatz} showed that a stronger bound on the chromatic number of planar graphs is obtained by forbidding triangles.

\begin{thm}[Gr\"{o}tzsch's Theorem \cite{grotzsch1959dreifarbensatz}]
Every triangle-free planar graph is 3-colourable.
\end{thm}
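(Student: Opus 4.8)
The plan is to follow Thomassen's inductive list-colouring strategy. The first step is to reduce Gr\"otzsch's Theorem to a precolouring-extension statement: in any proper $3$-colouring the two ends of an edge receive distinct colours, so composing with a permutation of $\{1,2,3\}$ lets us prescribe those two colours in advance. Hence it suffices to prove that for a $2$-connected triangle-free plane graph $G$ with outer cycle $C$ and a prescribed edge $p_1p_2$ of $C$, the precolouring $\phi(p_1)=1$, $\phi(p_2)=2$ extends to a $3$-colouring of $G$ (the general case follows by treating the blocks one at a time). Since triangle-free planar graphs are not $3$-choosable, this cannot be approached by giving every vertex a size-$3$ list; instead, following Thomassen, I would prove by induction on $|V(G)|$ a stronger statement in which the interior vertices carry size-$3$ lists, the outer-cycle vertices off a short precoloured subpath $P$ carry size-$2$ lists, and $G$ is asserted $L$-colourable except for a fixed finite family of small exceptional configurations. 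Pinning down the precise allowable length of $P$ and the exact list of exceptional configurations is the technical heart of the argument.

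The induction would run through the standard reductions, each using triangle-freeness. If $G$ is disconnected or has a cutvertex, colour its blocks in turn. If $C$ has a chord $e$, then $C+e$ splits $G$ into two triangle-free plane graphs overlapping in $e$; colour the part containing $P$ first, hand the resulting colours of $e$'s endpoints to the other part as a precoloured edge, and apply induction --- triangle-freeness guarantees the chord is not short, so the hypotheses transfer. If $G$ has a separating $4$-cycle $D$ (the shortest separating cycle possible, since $G$ is triangle-free), pick one with the fewest vertices inside, colour $G$ minus that interior by induction, and then colour the interior with its bounding $4$-cycle now precoloured; triangle-freeness forces $D$ to be chordless, so this is again an instance of the claim. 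In the remaining irreducible case --- $G$ is $2$-connected, $C$ is chordless, and every $4$-cycle bounds a face --- I would run the key step: take a vertex $v$ on $C$ adjacent to an endpoint of $P$ (so $v$ has exactly one coloured $C$-neighbour, as $G$ is triangle-free), colour it from its size-$2$ list avoiding that neighbour's colour, delete $v$, and delete the chosen colour from the lists of $v$'s interior neighbours; irreducibility forces those interior neighbours to be pairwise non-adjacent and to lie on the new outer cycle, each still with a list of size at least $2$, so the smaller graph again satisfies the hypotheses --- unless a local obstruction appears, which is exactly where the exceptional family comes from.

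The hard part will be isolating that exceptional family and then disposing of it: unlike in the girth $\ge 5$ case there are genuine obstructions for triangle-free graphs, and one must show that every failure of the deletion step is one of a short explicit list of small graphs built around a $4$-cycle near the precoloured subpath --- this, together with choosing $P$ and $v$ so that the reductions stay within the hypotheses, is the delicate part. Triangle-freeness is used essentially throughout: in ensuring that no chord- or $4$-cycle-split, and no vertex identification in a degenerate case, creates a triangle; in the base-case analysis of a precoloured $4$- or $5$-cycle with size-$3$ interior lists, where the statement holds only because triangles are absent; and it is precisely the failure of this last point for general planar graphs that separates Gr\"otzsch's Theorem from the Four Colour Theorem. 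Finally, because every list in the application equals $\{1,2,3\}$, one checks directly that each exceptional configuration is nonetheless $L$-colourable, which completes the proof.
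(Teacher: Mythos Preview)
The paper does not prove Gr\"otzsch's Theorem; it is quoted as a classical result, with only the remark (after Theorem~\ref{tech3choos}) that it follows from Thomassen's girth-five list-colouring theorem via the argument in \cite{thomassen3LCnew}. So there is no proof in the paper to compare against.

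As to your proposal on its own terms: it is a plan, not a proof. You twice defer exactly the content that constitutes the theorem --- ``Pinning down the precise allowable length of $P$ and the exact list of exceptional configurations is the technical heart of the argument'' and ``The hard part will be isolating that exceptional family and then disposing of it'' --- and never return to it. Everything surrounding those sentences is standard scaffolding.

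There is also a specific gap in your key step. When you delete an outer-cycle vertex $v$ and strip its colour from the lists of its interior neighbours, triangle-freeness does make those neighbours pairwise non-adjacent, as you say. But it does \emph{not} prevent them from being adjacent to vertices that were already on the outer cycle with size-$2$ lists: that only needs a $4$-cycle through $v$, which is allowed. So unless your inductive hypothesis permits adjacent size-$2$ vertices on the outer cycle --- and you never state what it permits --- the smaller graph need not satisfy the hypotheses at all. This is precisely the obstacle that distinguishes girth $\ge 4$ from girth $\ge 5$, and it is why Thomassen's actual proof of Gr\"otzsch in \cite{thomassen3LCnew} does not simply rerun the girth-five argument: the inductive statement there is substantially more intricate, and getting it right is the whole difficulty. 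Your final remark that ``every list in the application equals $\{1,2,3\}$'' so the exceptions can be checked directly is correct in spirit, but only once you have actually produced a finite exceptional family, which you have not.
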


This paper further investigates the problem of colouring planar graphs: in particular, of \emph{list colouring} planar graphs. List colouring is a generalization of colouring introduced in the late 1970s by Vizing \cite{vizing} and, independently, by Erd\H{o}s, Rubin, and Taylor \cite{erdos1979choosability}. Given a graph $G$, a \emph{list assignment $L$} is a function that assigns to each $v \in V(G)$ a list $L(v)$ of colours.  $L$ is a \emph{$k$-list assignment} if $|L(v)| \geq k$ for every $v \in V(G)$. An \emph{$L$-colouring} of $G$ is a colouring $\phi$ such that $\phi(v) \in L(v)$ for each vertex $v\in V(G)$. $G$ is \emph{$L$-colourable} if there exists an $L$-colouring of $G$. $G$ is \emph{$k$-choosable} if $G$ is $L$-colourable for every $k$-list assignment $L$.

In 1993, Voigt \cite{voigt1993list} showed that the Four Colour Theorem does not carry over directly to list colouring by constructing a planar graph that is not 4-choosable. However, Thomassen \cite{thomassen5LC} showed in 1994 that lists of size five suffice, thus answering a conjecture posed by Vizing and Erd\H{o}s, Rubin, and Taylor in the 1970s. 
\begin{thm}[Thomassen, \cite{thomassen5LC}]\label{5choosable}
Every planar graph is 5-choosable.
\end{thm}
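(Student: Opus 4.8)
The plan is to deduce Theorem~\ref{5choosable} from a stronger statement tailored for induction, following Thomassen's strategy. Call a $2$-connected plane graph $G$ with outer cycle $C$ a \emph{near-triangulation} if every bounded face is a triangle. I would prove: \emph{if $G$ is a near-triangulation, $xy \in E(C)$, and $L$ is a list assignment with $|L(v)| \geq 5$ for every interior vertex $v$, $|L(v)| \geq 3$ for every $v \in V(C) \setminus \{x,y\}$, $|L(x)| = |L(y)| = 1$, and $L(x) \neq L(y)$, then $G$ is $L$-colourable.} This implies Theorem~\ref{5choosable}: given a planar graph $H$ with a $5$-list assignment, first add edges (connecting components and then triangulating) to obtain a near-triangulation $G$ on the same vertex set, pick an edge $xy$ of its outer cycle, replace $L(x)$ and $L(y)$ by single distinct colours taken from their original lists, apply the strengthened statement, and restrict the resulting colouring to $H$.

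The strengthened statement is proved by induction on $|V(G)|$; the base case $|V(G)| = 3$ is immediate since the one uncoloured vertex of the triangle sees at most $2$ colours and has a list of size at least $3$. \textbf{Case 1: $C$ has a chord $uv$.} Then $uv$ separates $G$ into near-triangulations $G_1 \ni x,y$ and $G_2$ meeting in exactly the edge $uv$. Apply induction to $G_1$, which in particular colours $u$ and $v$ with distinct colours from their lists; then apply induction to $G_2$ with $u,v$ now playing the role of the precoloured adjacent pair (the other boundary vertices of $G_2$ still have lists of size at least $3$, interior vertices at least $5$). The two colourings agree on $\{u,v\}$, so their union $L$-colours $G$.

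\textbf{Case 2: $C$ has no chord.} Write $C = v_1 v_2 \cdots v_p v_1$ with $v_1 = x$, $v_2 = y$, and consider $v_p$. Its neighbours, listed from $v_1$ around the interior to $v_{p-1}$, form a path $v_1, u_1, \ldots, u_m, v_{p-1}$ in $G$, and since $C$ has no chord each $u_i$ is an interior vertex. Deleting $v_p$ leaves a near-triangulation $G - v_p$ with outer cycle $C' = v_1 u_1 \cdots u_m v_{p-1} v_{p-2} \cdots v_2 v_1$. Choose two colours $a, b \in L(v_p) \setminus L(x)$ (possible as $|L(v_p)| \geq 3$) and delete $a$ and $b$ from $L(u_i)$ for each $i$; each $u_i$ now has a list of size at least $3$ and lies on $C'$, while every other vertex keeps a list of the required size. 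By induction $G - v_p$ has an $L$-colouring $\phi$. To colour $v_p$, note its neighbours are $v_1, v_{p-1}, u_1, \ldots, u_m$: the $u_i$ all avoid $a$ and $b$, $\phi(v_1) \in L(x)$ avoids $a$ and $b$, and $\phi(v_{p-1})$ forbids at most one of $a, b$, so one of them remains available for $v_p$.

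The main obstacle is not any single case but maintaining the right invariant: that each reduction again yields a $2$-connected near-triangulation whose boundary is a genuine cycle and whose list sizes are exactly $3$ on the boundary, $5$ in the interior, and $1$ on a precoloured edge. This is precisely why the chord/no-chord dichotomy is forced — a chord is the only obstruction to the deletion in Case 2 producing a simple outer cycle, and $2$-connectedness is what makes the boundary a cycle at all. Once the strengthened hypothesis is formulated so that these conditions survive each operation, every case reduces to the short arguments above; the preliminary reduction from an arbitrary planar graph to a $2$-connected near-triangulation should also be checked, but it is routine.
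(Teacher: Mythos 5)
Your proof is correct: it is Thomassen's original induction, and the strengthened statement you formulate (near-triangulation, a precoloured edge on the outer cycle, lists of size $3$ on the boundary and $5$ inside) is essentially the paper's Theorem \ref{tech5choos} stated in the language of canvases. The paper, however, does not prove Theorem \ref{5choosable} this way: it quotes it from the cited source and observes that it also follows as a special case of the main technical result, Theorem \ref{345colouring} (via Theorem \ref{tech5choos}), whose proof is the bulk of the paper and involves much heavier machinery (acceptable paths, exceptional canvases of types (i)--(iii), deletable paths). So the comparison is really between your short, self-contained chord/no-chord induction and the paper's far more general local girth theorem: your route buys a two-page proof of $5$-choosability alone, while the paper's route buys the joint strengthening with Theorem \ref{3choosable} at the cost of that machinery; in particular, the paper's authors explain in the introduction why your Case 2 reduction (deleting two colours from the lists of the interior neighbours of $v_p$) cannot be carried out under a local girth list assignment, which is exactly why their argument has to colour and delete an arbitrarily long deletable path instead. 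Two small points you gloss over, both harmless: in Case 1 an endpoint of the chord may coincide with $x$ or $y$ (the induction still applies, since that vertex simply keeps its singleton list), and in Case 2 one should note that $v_p \neq y$, that the interior neighbours $u_1,\dots,u_m$ are distinct, and that the degenerate situation $m=0$ only occurs in the base case; these are routine checks in the standard write-up.
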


A natural question to ask is the following: by ruling out certain short cycles in planar graphs, can we further restrict the bound on the choosability number? Given that the answer to the analogous question for ordinary colouring is yes, it is perhaps unsurprising that this idea also works for list colouring. Recall that the \emph{girth} of a graph $G$ is the minimum number $k$ such that $G$ contains a cycle of length $k$. Since planar graphs of girth at least four (i.e.~triangle-free planar graphs) are 3-degenerate, a simple greedy argument shows that they are 4-choosable. Voigt \cite{voigt1995not} showed in 1995 that there are triangle-free planar graphs that are not 3-choosable: thus Gr\"{o}tzsch's Theorem does not carry over directly to list colouring. However, Thomassen \cite{thomassen3LC} showed in 1995 that if we also exclude 4-cycles, lists of size three suffice as follows. 

\begin{thm}[Thomassen, \cite{thomassen3LC}]\label{3choosable}
Every planar graph of girth at least five is 3-choosable.
\end{thm}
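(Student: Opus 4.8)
Planar graphs of girth at least $5$ are $3$-degenerate but not in general $2$-degenerate (the graph of the dodecahedron is $3$-regular with girth $5$), so a greedy argument yields only $4$-choosability; instead we follow the template Thomassen used for \Cref{5choosable} and prove, by induction on $|V(G)|$, a more flexible statement in which part of the outer boundary is precoloured and the list-size requirement is relaxed on the outer face. The target statement has roughly the form: \emph{if $G$ is a plane graph of girth at least $5$ whose outer face is bounded by a cycle $C$, if $P$ is a subpath of $C$ of bounded length, if the vertices of $P$ are precoloured with the colours forming a proper colouring of $P$, if $|L(v)| \ge 2$ for every $v \in V(C)\setminus V(P)$, and if $|L(v)| \ge 3$ for every $v \in V(G)\setminus V(C)$, then $G$ admits an $L$-colouring} --- subject to a short list of side conditions that confine the size-$2$ lists to a controlled region near $P$ and forbid chords and separating short cycles from interacting badly with $P$. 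To deduce \Cref{3choosable}: routine reductions let us assume that a minimal counterexample $G$ with $3$-list-assignment $L$ is $2$-connected, so its outer face is bounded by a cycle $C$; take $P$ to be an edge $p_1p_2$ of $C$ and precolour it (possible since $|L(p_2)| \ge 3$); then every remaining vertex keeps a list of size $3 \ge 2$, and each side condition holds automatically since girth at least $5$ rules out the relevant short cycles, so the strengthened statement applies and contradicts minimality.

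For the induction, take a counterexample $(G,C,P,L)$ to the strengthened statement with $|V(G)|$ minimum and, as in Thomassen's proof of \Cref{5choosable} but adapted to girth $5$, extract structure: $G$ is $2$-connected (a bad separation lets us colour the two pieces in turn, using the precoloured part as interface); $C$ has no chord that could be used to split $G$ into two smaller instances; every vertex off $C$ has degree at least $3$ (a vertex of degree at most $2$ in the interior has at most two coloured neighbours but a list of size at least $3$, so delete it and colour it last); and $G$ has no short separating cycle (colour the inside and the outside separately, then glue). Each reduction invokes minimality together with the slack built into the hypotheses --- the relaxed lists on $C$ and the freedom to grow or shift $P$.

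The heart of the proof is a reducible configuration on $C$ next to $P$. Let $u$ be the neighbour on $C$ of an endpoint $p$ of $P$ with $u \notin V(P)$. If $u$ has no neighbour strictly inside $C$, then $\deg_G(u)=2$: colour $u$ with a colour of $L(u)$ distinct from the colour of $p$ (possible since $|L(u)| \ge 2$), delete $u$, and apply induction to $G-u$ with outer cycle $C-u$ and the precoloured path shifted one step along $C$ --- the length bound on $P$ is precisely what must survive this bookkeeping. If instead $u$ has interior neighbours $w_1,\dots,w_t$, then girth at least $5$ forces the $w_i$ to be pairwise nonadjacent with essentially disjoint neighbourhoods near $u$, so after colouring $u$ each $w_i$ loses only one colour and so drops only from size at least $3$ to size at least $2$; one then re-routes the outer cycle to run through the relevant $w_i$ and checks that the new instance still meets all the hypotheses and side conditions.

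Carrying out this last subcase --- choosing the colour of $u$, controlling the simultaneous list reductions at the $w_i$, re-routing $C$, and preserving girth and all the side conditions in every branch --- is where I expect essentially all the genuine difficulty to lie, and it is this step that dictates what the correct side conditions and the correct length bound on $P$ must be: weak enough to hold for the instance in which all lists have size at least $3$, yet strong enough to be reproduced after every reduction. By contrast, the base cases are easy: when $G$ has no interior vertices (or is small enough that $C$ bounds a single face) one colours greedily around $C$ starting from $P$, the only possible failure being at the last vertex, which is adjacent to both ends of the already-coloured arc and which the side conditions are arranged to exclude.
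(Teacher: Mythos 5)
There is a genuine gap, and it sits exactly where the real content of Thomassen's theorem lies: in the formulation of the strengthened inductive statement. As written, your statement allows $|L(v)|\ge 2$ for \emph{every} vertex of $V(C)\setminus V(P)$, and this is false even in the smallest cases, with no chords, no separating cycles and no interior vertices: take $C=p_1p_2xyz p_1$ a $5$-cycle with $p_1p_2$ precoloured $c_1,c_2$, and $L(x)=\{c_2,a\}$, $L(z)=\{c_1,b\}$, $L(y)=\{a,b\}$; then $x$ is forced to $a$, $z$ is forced to $b$, and $y$ cannot be coloured. So the vague ``side conditions that confine the size-$2$ lists to a controlled region near $P$'' cannot be an afterthought \textemdash{} they \emph{are} the theorem. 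The correct hypothesis (Thomassen's, restated in this paper as Theorem~\ref{tech3choos} via the notion of a canvas) is that the vertices with lists of size $2$ form an \emph{independent set} on the outer boundary (they may be anywhere on $C$, not only near $P$), together with an explicit exceptional configuration: when the precoloured path has four vertices, a list-$2$ vertex $u$ adjacent to both endpoints with $L(u)=L(v_1)\cup L(v_4)$ blocks the extension (exceptional canvas of type (i)). Identifying this pair \textemdash{} independence of the list-$2$ set, plus the type-(i) obstruction \textemdash{} is precisely what makes the induction closable: in your ``$u$ has interior neighbours $w_i$'' case, girth $5$ guarantees the newly created list-$2$ vertices are independent, which is why independence (and not proximity to $P$) is the reproducible invariant, and one must additionally check that no type-(i) configuration is created, which is where the length bound on $P$ genuinely enters. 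Without this, the induction cannot even be set up, and you have in any case deferred the entire crucial case analysis (``where I expect essentially all the genuine difficulty to lie''), so what remains is a plan rather than a proof.

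For comparison, the paper does not reprove Thomassen's theorem by a standalone induction at all: Theorem~\ref{3choosable} is cited, and is recovered here as a special case of the main result, since Theorem~\ref{tech3choos} (the correctly formulated strengthening just described, with the independent list-$2$ set $A$ and exceptional type (i)) is the girth-$5$ specialization of Theorem~\ref{345colouring}. If you want a self-contained proof along your lines, the fix is to replace your hypothesis by the canvas-style one and to carry out the colour-and-delete analysis while verifying after each reduction that the list-$2$ vertices remain independent and that no type-(i) obstruction arises.
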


Thomassen later gave a shorter proof in 2003 \cite{thomassen3LCnew}. Though these bounds on the choosability number are tight, there is hope of strengthening Thomassen's theorems by modifying the parameters of the problem: in this paper, we introduce the concept of \emph{local girth choosability}, wherein the list size of a vertex depends not on the girth of the graph, but rather on the length of the shortest cycle in which that vertex is contained. We provide a formal definition below, following a few other necessary definitions.

\begin{definition}
Let $G$ be a graph. The \emph{girth of a vertex} $v \in V(G)$ is denoted $\g_G(v)$ and is defined as the minimum number $k$ such that $v$ is contained in a $k$-cycle. If the graph $G$ is clear from context, we will often omit the subscript and write $\g(v)$ instead of $\g_G(v)$. If $v$ is not contained in a cycle in $G$, we set $\g_G(v) = \infty$.
\end{definition}

Note that if $G' \subseteq G$ and $v \in V(G')$, then $\g_{G'}(v) \geq \g_G(v)$.
We define a \emph{local girth list assignment} as follows.

\begin{definition}
Let $G$ be a planar graph. A \emph{local girth list assignment} for $G$ is a list assignment $L$ such that:
\begin{itemize}
    \item $|L(v)| \geq 3$ for all $v \in V(G)$ with $\g(v) \geq 5$, 
    \item $|L(v)| \geq 4$ for all $v \in V(G)$ with $\g(v) = 4$, and
    \item $|L(v)| \geq 5$ for all $v \in V(G)$ with $\g(v) = 3$.
\end{itemize}
\end{definition}

We say a planar graph $G$ is \emph{local girth choosable} if $G$ admits an $L$-colouring for every local girth list assignment $L$. 
\vskip 4mm

Our main result is the following theorem.

\begin{thm}\label{localcolouring}
Every planar graph is local girth choosable.
\end{thm}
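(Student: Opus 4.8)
The plan is to prove a strengthened statement by induction, following Thomassen's method of adding precolouring constraints on the outer face so that the statement is strong enough to carry the induction. Concretely, I would consider a planar graph $G$ drawn in the plane with outer cycle $C$, where some path $P$ of length at most $2$ (or a single vertex, or an edge) on $C$ is precoloured, and a local girth list assignment $L$ is given on $G$, except that we enlarge the hypotheses along the outer boundary: every vertex on $C$ should be allowed a list whose size is one or two smaller than what a local girth assignment demands, to account for the "boundary effect," while interior vertices satisfy the local girth conditions exactly. The delicate point is to find the right list-size requirement on $C$: in Thomassen's $5$-list proof it is "$3$ on the boundary, $5$ inside"; in the girth-$5$ proof it is "roughly $2$ on the boundary, $3$ inside, with extra care near the precoloured path." Here we need a single hypothesis that specializes to both. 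I would guess the condition is: $|L(v)| \ge \g(v) - 2$ for $v \in V(C)$ that are "far enough" along $C$ from the precoloured path, $|L(v)| \ge \g(v)$ for $v \notin V(C)$, with the precoloured path $P$ coloured from its lists, and some intermediate condition (like $|L(v)| \ge \g(v)-1$) for the one or two vertices of $C$ adjacent to $P$.

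The main structural step is the reduction machinery. As in Thomassen's arguments, I would first reduce to the case where $G$ is $2$-connected and $C$ is an induced cycle (if $C$ has a chord, split along it; if there is a cut vertex, split and paste the colourings), and where every interior vertex has degree at least... well, some appropriate minimum degree. The heart of the induction is a case analysis:
\begin{itemize}
\item If $C$ has a chord, split $G$ into two pieces along the chord, colour the piece containing $P$ first, then treat the chord endpoints as a new precoloured path for the other piece.
\item If there is a vertex $v$ on $C$ (not on $P$, suitably placed) with a neighbour on $C$ creating a separating triangle or separating short cycle, split along it.
\item Otherwise, pick the neighbour $v$ of an endpoint of $P$ along $C$, and either (a) if $v$ lies on a triangle or $4$-cycle, handle its large list directly, or (b) delete $v$, updating the lists of its neighbours on the outer walk by removing the colours forced on $v$; the point is that removing $v$ increases the girth of its former neighbours (they are now further from short cycles), so their reduced list sizes still satisfy the boundary hypothesis relative to the \emph{new} girths. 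This "girth increases under vertex deletion" observation — already flagged in the excerpt as $\g_{G'}(v) \ge \g_G(v)$ — is exactly what makes the local version tractable.
\end{itemize}
When we delete $v$, the new outer cycle $C'$ consists of the old $C$ minus $v$ plus the neighbours of $v$ not already on $C$; those interior neighbours of $v$ move to the boundary, and we must check that their list sizes (which satisfied the \emph{interior} requirement $|L| \ge \g(v)$) now satisfy the \emph{boundary} requirement, possibly after deleting one or two colours used near $P$. Since the boundary requirement is $\g - 2$ (or $\g-1$), and girth only goes up in $G' = G - v$, we have slack of at least $2$ to absorb the deleted colours. The case where $v$ or its neighbours lie on short cycles requires tracking how list sizes $3$, $4$, $5$ interact with girths $5$, $4$, $3$ respectively, and I expect the genuinely hard part to be bookkeeping the transitions: a vertex of girth $4$ or $3$ on the boundary has a larger list but also more constrained colouring behaviour, and one must ensure that when short cycles are "used up" by the reduction the girth of nearby vertices jumps accordingly so that the smaller list sizes become valid. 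In particular, handling configurations where a triangle or $4$-cycle meets the outer cycle $C$ near $P$ — so that a girth-$5$-type $3$-list argument and a girth-$3$-type $5$-list argument must be run simultaneously on adjacent vertices — is where the unification is nontrivial, and where I would expect to need a carefully chosen auxiliary invariant (perhaps a weighting of the outer vertices, or a "potential" counting deficiency against girth) to make the induction close.

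Overall, the strategy is: (1) state the strengthened boundary-precoloured lemma with the right local girth hypothesis on $C$; (2) induct on $|V(G)| + |E(G)|$; (3) dispatch the easy reductions (disconnected, cut vertices, chords of $C$, separating short cycles); (4) in the remaining "internally $4$-connected-ish" case, identify a boundary vertex adjacent to $P$ to delete or a short cycle incident to $C$ to contract/split, update lists using that girth is monotone under subgraphs, and apply induction; (5) recover the colouring of $G$ by extending. The original Theorem~\ref{localcolouring} follows by taking $C$ to be any facial cycle and precolouring nothing (or precolouring a single vertex/edge to seed the induction). The main obstacle, I believe, is engineering the boundary hypothesis and the reduction so that the girth-$3$, girth-$4$, and girth-$5$ regimes interlock cleanly at their interfaces rather than each being handled in isolation.
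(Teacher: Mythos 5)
Your high-level scaffolding is right: strengthen to a precoloured-boundary statement on a plane graph with outer cycle $C$, induct via vertex deletion and splitting along chords and separating short cycles, and exploit that girth is monotone under passing to subgraphs. All of that matches the paper. But the specific hypotheses you guess are wrong in a way that matters, and the genuinely new ideas that make the induction close are missing.

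First, the boundary hypothesis. You propose $|L(v)| \ge \g(v) - 2$ for boundary vertices, which for a girth-$3$ boundary vertex demands only a list of size $1$. That is far too weak: Thomassen's own $5$-list theorem needs boundary lists of size $3$ even though every vertex of a near-triangulation has girth $3$. The paper's canvas definition instead imposes a \emph{flat} lower bound of $3$ on every boundary vertex outside the precoloured path $S$, independent of girth, together with a distinguished independent set $A$ of boundary vertices of girth $\ge 5$ allowed to have lists of size $2$. The girth-dependent bounds $5$, $4$, $3$ apply only to \emph{interior} vertices. Your formula inverts the dependence on the boundary, and the induction would fail immediately for graphs with triangles meeting the outer cycle.

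Second, and more fundamentally: your main reduction is to delete a single boundary vertex adjacent to an endpoint of the precoloured path. That is the classical move from both of Thomassen's proofs, and the authors explicitly point out it does not survive the mixed-girth setting. The actual failure mode is that consecutive stretches of the outer cycle can share nested lists (as happens along broken wheels), so that deleting one vertex at a time repeatedly reproduces exceptional configurations. The paper's central technical innovation is to colour and delete an \emph{arbitrarily long} boundary path, a ``deletable path,'' whose length is dictated by the pattern of lists along $C$ and cannot be bounded a priori. Your plan never anticipates needing to delete more than a bounded number of vertices at once, and will not close.

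Third, a workable inductive statement must come with a complete classification of the exceptional precoloured configurations that block extension. The paper identifies three types, built from generalized wheels and from a girth-$\ge 5$ vertex of $A$ bridging the endpoints of the precoloured path, and every inductive step must verify the reduced canvas avoids them. You acknowledge needing ``a carefully chosen auxiliary invariant'' but do not identify these structures; without them the inductive statement is not even well-formed. Establishing them, together with the supporting lemmas on separating $3$-, $4$-, $5$-, and $6$-cycles and on broken wheels, is the bulk of the real work.
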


We note that Theorem \ref{localcolouring} is a joint strengthening of Theorems \ref{5choosable} and \ref{3choosable}.

The idea of restricting the list size of vertices based on the structure that surrounds them is not a new one. Indeed, many list-colouring theorems admit a \emph{local} version: that is, a version where list sizes depend on the local structure rather than a global property of the graph. For instance, Borodin, Kostochka and Woodall \cite{borodin1997list} proved a local version of Galvin's Theorem that the List Colouring Conjecture holds for bipartite graphs, where the size of an edge's list depends on the maximum degree of its endpoints. In a similar vein, Bonamy, Delcourt, Lang, and Postle \cite{bonamy2020edge} proved a local asymptotic version of Kahn's Theorem on list-edge colouring. In \cite{kelly2020local}, Kelly and Postle proved a local epsilon version of Reed's conjecture, where list sizes are lower-bounded by a linear combination of vertices' degrees and the size of the largest clique in which they are contained.   In \cite{davies2020coloring}, Davies, de Joannis de Verclos, Kang, and Pirot gave an asymptotic theorem for list-colouring triangle-free graphs, where again vertices' list sizes are bounded by a function of their degree.

The proof of our main theorem is inspired by those of Theorems \ref{5choosable} and \ref{3choosable}. To prove those theorems, Thomassen instead proved more technical theorems (Theorems \ref{tech5choos} and \ref{tech3choos}, given later on in this section) involving precolouring paths on the outer face boundary of plane graphs, and extending these to colourings to the whole graph. In Theorems \ref{tech5choos} and \ref{tech3choos}, list sizes are restricted still further than in Theorems \ref{5choosable} and \ref{3choosable}: in particular, vertices on the outer face boundary of the graphs have smaller lists than the others. It is precisely these stronger restrictions that allow the theorems to be proven inductively: a subset of the vertices of a vertex-minimum counterexample to each theorem are coloured, their colours removed from the lists of neighbouring vertices, and the coloured vertices deleted.  If this is done carefully, the resulting graph will still satisfy the premises of the technical theorem, allowing the proof to be completed by induction. Like Thomassen, we also prove our main colouring result (Theorem \ref{localcolouring}) via a more technical theorem. Our main technical theorem is Theorem \ref{345colouring}, which implies both of Thomassen's technical theorems, Theorems \ref{tech5choos} and \ref{tech3choos}, and hence implies both of Theorems \ref{5choosable} and \ref{3choosable}.

Unfortunately, the proofs of Theorems \ref{5choosable} and \ref{3choosable} cannot be readily combined to prove Theorem \ref{localcolouring}. The proof of Theorem \ref{tech5choos} (which implies Theorem \ref{5choosable}) relies on the fact that we may delete up to two colours in the lists of vertices not on the outer face boundary of the graph without making their lists too small. The same is not true for a graph with a local girth list assignment. The proof of Theorem \ref{tech3choos} (which implies Theorem \ref{3choosable}) of course relies on the fact that every vertex in the graph has girth at least five: this ensures that in colouring and deleting a path of length at most one on the outer face boundary of the graph, the vertices that were adjacent to these deleted vertices form an independent set (a condition required for the technical inductive statement to go through).  

Though many of our lemmas are similar in spirit to those used by Thomassen, the main colouring argument is quite different: in our proof, we colour and delete an \emph{arbitrarily long} path on the outer face boundary of the graph. The path in question is determined by the lists of the vertices on the outer face boundary. Since the structure of the graph under study is more complex than that of a simple near-triangulation or a graph of girth at least five, a stronger inductive statement and more structural lemmas are needed than for either of the proofs  of Theorems \ref{5choosable} or \ref{3choosable}. Indeed for our proof, we even have to generalize Thomassen's characterization of when a precoloured path of length two does not extend to a colouring (Theorem \ref{thomassen3ext} below) to the local setting. Unfortunately then, the stronger inductive statement given in Theorem \ref{345colouring} gives rise to a number of exceptional graphs which render the analysis more difficult: when working through inductive arguments, we will have to argue why these exceptional cases do not arise. 

It is perhaps surprising that there is a single theorem that unifies both Theorems \ref{5choosable} and \ref{3choosable}. After all, the proofs of the Four Colour Theorem and Gr\"{o}tzsch's Theorem \textemdash which are in a sense the analogous theorems for ordinary vertex colouring\textemdash are very different. That Theorem \ref{localcolouring} (a local, unified version for list colouring) holds hints perhaps at something fundamental about planar graphs and the relation between colouring and short cycles. 

\subsection{Preliminaries and Technical Result}
Before we present our more technical theorem, we give a few required definitions.

The \emph{outer} face of a plane graph is its infinite face. Let $G$ be a plane graph, and let $E$ and $V$ be the set of edges and vertices, respectively,  in the boundary walk of the outer face of $G$. Let $H$ be the subgraph of $G$ with $V(H) = V$ and $E(H) = E$.  We say a path (or cycle) $S$ is \emph{on the outer face boundary of $G$} if $S \subseteq H$.  

\begin{definition}
Let $G$ be a plane graph, and let $S = v_1v_2 \dots v_k$ be a path in $G$. We say $S$ is an \emph{acceptable path} if one of the following holds. 
\begin{itemize}
    \item $k \leq 3$, or
    \item $k = 4$, $\g(v_2) \geq 4$, and $\g(v_3) \geq 4$, or
    \item $k = 4$, and either $\g(v_2) \geq 5$ or $\g(v_3) \geq 5$.
\end{itemize}
An \emph{acceptable cycle} is a cycle $S$ in $G$ where $S$ contains an edge $e$ such that $S-e$ is an acceptable path in $G$. 
\end{definition}

Note that in the above definition, we allow $S$ to be the empty path. Moreover, note that if $S$ is an acceptable path in a graph $G$, $S_1$ is a subpath of $S$, and $G_1$ is a subgraph of $G$ that contains $S_1$, then $S_1$ is an acceptable path of $G_1$.

Theorem \ref{345colouring} below \textemdash the proof of which constitutes the bulk of this paper \textemdash characterizes precisely when a colouring of an acceptable path or cycle extends to a list colouring of the whole graph. The list assignment described in the theorem is a restriction of a local girth list assignment. Under this list assignment, the list sizes depend in part on whether or not the vertices are on the outer face boundary of the graph.  For this reason (among others), the following definition is useful.

\begin{definition}
Let $G$ be a plane graph, and let $C \subseteq G$ be a cycle. $\Int(C)$ is defined as the subgraph of $G$ induced by the vertices of $G$ in the interior of $C$. Similarly, we define $\Int[C]$ to be the subgraph of $G$ containing precisely the vertices and edges inside and on $C$.
\end{definition}

If the graph under study contains one of a specific set of subgraphs, a colouring of an acceptable path is only guaranteed to extend to a colouring of the whole graph if the acceptable path is short. Among these problematic subgraphs are the \emph{broken} and \emph{generalized wheels}, defined below. We take the definitions and associated terminology from Thomassen in \cite{thomassen2007exponentially}.
\begin{definition}
A \emph{broken wheel} is either a cycle $v_1v_2v_3v_1$ or the graph formed by a single cycle $v_1v_2 \dots v_qv_1$ together with edges $v_2v_4, v_2v_5, \dots, v_2v_q$. The path $v_1v_2v_3$ is called the \emph{principal path} of the broken wheel. A \emph{wheel} is a cycle $v_1v_2 \dots v_qv_1$ together with a single vertex $v$ and edges $vv_1,vv_2, \dots, vv_q$. Again, we say $v_1v_2v_3$ is the \emph{principal path} of the wheel.  If $v_1v_2v_3$ is the principal path of a wheel or broken wheel, we call $v_1v_2$ and $v_2v_3$ the principal edges.
\end{definition}

\begin{definition}
A \emph{generalized wheel} is defined recursively as follows. Every wheel and broken wheel is a generalized wheel. If $W$ is a generalized wheel with principal path $v_1v_2v_3$ and $W'$ is a generalized wheel with principal path $u_1u_2u_3$, then the graph obtained from $W$ and $W'$ by identifying a principal edge of $W$ with a principal edge in $W'$ in such a way that $v_2$ and $u_2$ are identified is also a generalized wheel. Its principal path is the path formed by the principal edges in $W$ and $W'$ that were not identified. 
\end{definition}

Note that if $W$ is a generalized wheel and $C$ is the outer cycle of $W$, then either $C$ has a chord, $W$ is a wheel, or $W$ is a 3-cycle. Moreover, note that as there may be multiple ways to construct a generalized wheel $W$, there maybe several possible principal paths for $W$. Unless $W$ is a triangle or a wheel, these paths are edge-disjoint. Finally, note that every vertex in a generalized wheel has girth 3.

As mentioned earlier, Theorem \ref{345colouring} describes when a list colouring of an acceptable path $S$ in a graph $G$ extends to a list colouring of the entire graph. Below, we define a \emph{canvas}, which will allow us to concisely keep track of the graph, acceptable path, and list assignment.

\begin{definition}
Let $G$ be a plane graph, and let $C$ be the subgraph whose vertex- and edge-set are precisely those of the outer face boundary of $G$. We say  $(G, L, S, A)$ is a \emph{canvas} if $S$ is a subgraph of  $C$, $A \setminus V(S)$ is an independent set of vertices in $V(C)\setminus V(S)$ such that $\g(v) \geq 5$ for each $v \in A$, and $L$ is a list assignment whose domain contains $V(G)$ such that: 
\begin{itemize}
    \item $|L(v)| \geq 1$ for all $v \in V(S)$,
    \item $|L(v)| = 2$ for all $v \in A \setminus V(S)$,
    \item $|L(v)| \geq 3$ for all $v \in V(G) \setminus (A \cup V(S))$,
    \item $|L(v)| \geq 4$ for all $v \in V(G) \setminus V(C)$ such that $\g(v) = 4$, and 
    \item $|L(v)| \geq 5$ for all $v \in V(G) \setminus V(C)$ such that $\g(v) = 3$.
\end{itemize}
We say $(H, L, S \cap H, A \cap V(H))$ is a \emph{subcanvas} of a canvas $K = (G, L, S, A)$ if $H \subseteq G$. In this case, we denote $(H, L, S\cap H, A \cap V(H))$ by $K[H]$. Note that $K[H]$ is a canvas.
\end{definition}

We think of the vertices of $S$ as being \emph{precoloured}, and we say a canvas $K = (G, L, S, A)$ admits an $L$-colouring if $G$ admits an $L$-colouring. It might seem more natural to require that $A$ (and not $A \setminus V(S)$) is an independent set of vertices with lists of size two; this definition is more convenient, since when working through inductive arguments the precoloured vertices are sometimes elements of $A$. Having established the required definitions, we give our technical theorem below. 


\begin{thm}\label{345colouring}
Let $G$ be a plane graph, and let $S$ be either an acceptable path $v_1v_2 \cdots v_k$ or acceptable cycle $v_1v_2\cdots v_kv_1$. If $(G, L, S, A)$ is a canvas, then every $L$-colouring $\phi$ of $G[V(S)]$ extends to an $L$-colouring of $G$ unless one of the following occurs: 
\begin{enumerate}[(i)]
    \item $k=4$; there exists a vertex $u \in A \setminus V(S)$ that is adjacent to both $v_1$ and $v_4$; and $L(u) = L(v_1) \cup L(v_4)$, or
    \item $k=4$; there exists a vertex $u \in A \setminus V(S)$ such that, up to reversing the names of the vertices of $S$, $u$ is adjacent to $v_4$; there exists a vertex $w \not \in V(S)$ on the outer face boundary of $G$ such that $uw \in E(G)$;  $v_1v_2w$ is the principal path of a generalized wheel $W$ where the vertices on the outer cycle of $W$ are on the outer face boundary of $G$; every vertex $v$ in the outer cycle of $W$ except $v_1$ and $v_2$ has $|L(v)| = 3$, or
    \item $k=3$; $S$ is the principal path of a generalized wheel $W$ where the vertices on the outer cycle of $W$ are on the outer face boundary of $G$; every vertex $v$ in the outer cycle of $W$ except $v_1, v_2,$ and $v_3$ has $|L(v)| = 3$.
\end{enumerate}
\end{thm}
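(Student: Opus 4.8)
The plan is to prove Theorem~\ref{345colouring} by induction on $|V(G)|$, following the scheme of Thomassen but with the longer-path colouring argument sketched in the introduction. I would fix a counterexample $(G,L,S,A)$ minimizing $|V(G)|$, with $\phi$ a precolouring of $G[V(S)]$ that does not extend and such that none of the outcomes (i)--(iii) holds. The first batch of reductions is the standard one: I would show $G$ is connected, that the outer face boundary $C$ is a cycle (if $G$ has a cut vertex or a bridge, split along it and apply induction to the pieces, taking care that the acceptable path/cycle and the set $A$ restrict correctly to each piece via the subcanvas notion), and that $C$ has no chord $xy$ with both $x,y \notin V(S)$ — a chord would let us colour one side, which is a smaller canvas whose precoloured path $S$ together with the chord is still acceptable, then colour the other side; one has to check the chord does not create a forbidden configuration, which is where the exceptional cases in the statement get used. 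I would also argue every vertex not on $C$ has its "full" list size ($3$, $4$, or $5$ according to its girth) and that $G$ is $2$-connected with the outer cycle chordless, so that the only relevant local obstructions are wheels, broken wheels, and their amalgams.

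Next comes the heart of the argument. With $S = v_1 v_2 \cdots v_k$ (treat the cycle case by deleting the edge $e$ making $S-e$ acceptable and proceeding on the resulting path-canvas), let $v_0$ and $v_{k+1}$ be the neighbours of $v_1$ and $v_k$ on $C$ outside $S$. The idea is to extend the precoloured path along $C$: choose colours for a maximal run $v_{k+1}, v_{k+2}, \dots$ of consecutive outer vertices so that each newly coloured vertex $v_i$ is given a colour $c_i \in L(v_i)$ that (a) differs from $c_{i-1}$, and (b) is chosen to control how many colours get removed from the list of the unique (by chordlessness) interior neighbour path between $v_{i-1}$ and $v_i$. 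Concretely, for a vertex $v_i$ with interior neighbours forming a fan, one wants to pick $c_i$ so that each interior neighbour loses at most one colour, and to pick $c_{k+1}$ (resp. the far end) so that $v_0$ (resp.\ $v_{k+1}$) also only loses one colour, after which the remaining graph $G' = G - \{v_1,\dots\}$ with updated lists $L'$, new precoloured path the coloured run, and $A' = (A \setminus \{\text{coloured vertices}\}) \cup \{\text{interior neighbours that dropped to list size }2\}$ is a strictly smaller canvas. The key point is to verify $(G',L',S',A')$ is genuinely a canvas: $A' \setminus V(S')$ must be independent with all girths $\geq 5$ in $G'$ (this is why $A$ was defined as $A\setminus V(S)$ being independent, and why an arbitrarily long path is colored — a length-one path, as in Thomassen's girth-$5$ proof, would not suffice here when triangles are present), and none of outcomes (i)--(iii) may hold for the new canvas, which is exactly where the generalized-wheel bookkeeping enters. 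When such a colouring extension of the outer path fails, I would argue the failure forces $G$ to be (close to) a generalized wheel with principal path $S$, yielding outcome (ii) or (iii), or forces the configuration in (i).

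The main obstacle, as the authors themselves flag, is controlling the exceptional configurations: every time a chord is used, a vertex is deleted, or the outer path is extended, one must rule out that the smaller canvas falls into case (i), (ii), or (iii), and conversely show that when the inductive extension is genuinely impossible the original canvas must itself be one of these exceptional graphs. This requires (a) a precise generalization of Thomassen's characterization of non-extendable precoloured paths of length two (Theorem~\ref{thomassen3ext}) to the local-girth setting, analyzing exactly how broken wheels, wheels, and their edge-amalgams behave under the $3/4/5$ list regime, and (b) a careful case analysis of how generalized wheels can be "glued" onto the deleted path, since a forbidden configuration in $G'$ need not correspond to one in $G$ but may be created by the deletions. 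I expect roughly all the length of the paper to go into this structural analysis; the induction skeleton itself is routine. A secondary technical nuisance is the interaction between vertices of $A$ (list size exactly $2$) on the outer boundary and the girth-$\geq 5$ requirement they carry: one must ensure that colouring adjacent outer vertices never forces an $A$-vertex's list to size $1$ except when it is being added to the new precoloured path, and that newly-created size-$2$ vertices indeed have girth $\geq 5$ in the reduced graph, which uses the chordlessness of $C$ and the monotonicity $\g_{G'}(v) \geq \g_G(v)$.
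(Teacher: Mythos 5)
Your high-level skeleton (minimal counterexample, $2$-connectivity, chordlessness of $C$, colour-and-delete a run of outer vertices, verify the reduced canvas is unexceptional) does match the paper's strategy, but there are two points where your sketch would not go through and the paper does something importantly different.

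First, your mechanism for controlling list-damage is wrong. You propose choosing the colours $c_i$ along the extended outer path so that ``each interior neighbour loses at most one colour,'' but that is not achievable by colour choice alone: an interior vertex adjacent to two or three consecutive coloured outer vertices loses two colours regardless of how you colour them, because adjacent outer vertices must receive distinct colours. The paper instead defines the \emph{deletable path} $P=v_{k'+1}\cdots v_j$ purely by nested list inclusions ($L(v_{i-1})\subseteq L(v_i)$ up to the first failure), two-colours $P$ past its first vertex or two, and then proves a suite of structural lemmas (the identification Lemmas~\ref{closeafan}/\ref{nobigfans}, Lemma~\ref{nogenwheels}, Lemma~\ref{P-structure-i}) to show no interior vertex is adjacent to two $P$-vertices at distance $\ge 2$, hence to at most two consecutive ones; together with Lemma~\ref{intg4s} (interior girth-$\ge4$ vertices touch at most one non-$S$ outer vertex), this bounds the loss at $2$ for girth-$3$ interiors (list $\ge5$), $1$ for girth-$4$ (list $\ge4$), and $1$ for girth-$\ge5$ (list $\ge3$). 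Your sketch has no substitute for these broken-wheel lemmas, and without them the list accounting after deletion fails.

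Second, and more seriously, your sketch does not address the case where the new low-list set $A'$ fails to be independent. You note in passing that you ``must ensure'' this, but the paper shows (Lemma~\ref{notnone}) that after deleting $P$ this \emph{can} fail, precisely when $V(P)\cap A=\{v_{k'+2}\}$ and there is an edge $u_1u_2$ between two interior girth-$\ge5$ vertices adjacent to $v_{k'+1}$ and $v_{k'+3}$ respectively. The fix (Lemma~\ref{notctx}) is not to delete a different path, but to colour and delete $P\cup\{u_1,u_2\}$ together, exploiting the fact that the pentagon $u_1u_2v_{k'+3}v_{k'+2}v_{k'+1}$ has empty interior by Lemma~\ref{sep5cycle}. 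This extra deletion of two interior vertices is an essential move absent from your outline; deleting only a subpath of $C$ simply does not produce a canvas in this case.

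Two smaller issues: you claim only chordlessness of $C$ for chords with both ends outside $V(S)$, but the paper needs and proves full chordlessness (Lemma~\ref{chordless}), including chords with one or both ends on $S$, and that argument is several cases long; and after deleting the coloured run the precoloured subgraph of the reduced canvas is still $S$, not ``the coloured run'' as you wrote (those vertices are gone).
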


We say  $(G, L, S, A)$ is an \emph{exceptional canvas of type (i), type (ii),} or \emph {type (iii)}, if it is described by (i), (ii), or (iii), respectively, above. If a canvas is not an exceptional canvas of any type, we say it is \emph{unexceptional}.  Note that an exceptional canvas of type (ii) or (iii) might still admit a colouring, depending on $L$. Examples of exceptional canvases that do not admit an $L$-colouring are given in Figure \ref{fig:bad_cases}. We note that there is precisely one case where $(G,L,S,A)$ is unexceptional, there exists an $L$-colouring of $S$ but not necessarily of $G[V(S)]$: this is the case where $S$ is a path of length three and $V(S)$ induces a 4-cycle.

While Theorem \ref{345colouring} is not an if and only if statement, it is rather straightforward to characterize the list assignments of exceptional canvases that do not admit a colouring (see \cite{thomassen2007exponentially} and \cite{postlethomas3} for such details).

As a corollary to Theorem \ref{345colouring}, we immediately obtain Theorem \ref{localcolouring}: to see this, note that if $(G, L, S, A)$ is a canvas, then by definition $L$ is a restriction of a local girth list assignment. Moreover, if $|V(S)| \leq 2$ (that is, if at most two vertices on the outer face boundary of $G$ are precoloured), then $(G, L, S, A)$ is unexceptional and so $G$ admits an $L$-colouring. 

\begin{figure}[ht]
\begin{center}
\begin{tikzpicture}[scale=0.80]
   \newdimen\R
   \R=2cm
   \draw (0:\R) \foreach \x in {72,144,...,360} {  -- (\x:\R) };
   \foreach \x/\l/\p in
     { 72/{\{2\}}/above,
      144/{\{1\}}/left,
      216/{\{2\} }/left,
      288/{\{1\}}/below
     }
     \node[inner sep=1pt, minimum size=7pt, circle,draw,fill,label={\p:\l}] (\x) at (\x:\R) {};
   \node[inner sep=1pt, minimum size=7pt, star,star points=4,star point ratio=0.5, draw, fill=white, label={right:\{1,2\}}] (360) at (360:\R) {}; 

\end{tikzpicture}
\hskip 6mm
\begin{tikzpicture}[scale=0.72, invisnode/.style={circle, draw=white, fill=white,  minimum size=1mm}]
\node[inner sep=1pt, minimum size=7pt, circle,draw,fill, label={right:\{2\}}] at (1,0) (v1) {};
\node[inner sep=1pt, minimum size=7pt, circle,draw,fill, label={right:\{4\}}] at (0.2,2) (v2) {};
\node[inner sep=1pt, minimum size=7pt, circle,draw,fill, label={left:\{3\}}] at (-2.2,2) (v3) {};
\node[inner sep=1pt, minimum size=7pt, circle,draw,fill, label={left:\{2\}}] at (-3,0) (v4) {};
\node[inner sep=1pt, minimum size=7pt, star,star points=4,star point ratio=0.5, draw, fill=white, label={right:\{1,2\}}] (v5) at (0.3, -2) {};
\node[inner sep=1pt, minimum size=7pt, circle,draw,fill=white, label={below:\{1,2,3\}}] at (-1,-2) (v6) {};
\node[invisnode] at (-3,-1) (v7) {};
\node[invisnode] at (-2, -2) (v8) {};
\node[] at (-2.5, -1.4) {\Huge{$\ddots$}};
\node[] at (-2.2, -0.5) {\Large{$W$}};

\draw[black] (v1)--(v2);
\draw[black] (v2)--(v3);
\draw[black] (v3)--(v4);
\draw[black] (v1)--(v5);
\draw[black] (v5)--(v6);
\draw[black] (v3)--(v6);
\draw[black] (v4)--(v7);
\draw[black] (v6)--(v8);    
\end{tikzpicture}
\hskip 6mm
\begin{tikzpicture}[scale=0.72, invisnode/.style={circle, draw=white, fill=white,  minimum size=1mm}]
\node[inner sep=1pt, minimum size=7pt, circle,draw,fill, label={right:\{1\}}] at (0.5,0) (v1) {};
\node[inner sep=1pt, minimum size=7pt, circle,draw,fill, label={above:\ \ \ \{2\}}] at (0,2) (v2) {};
\node[inner sep=1pt, minimum size=7pt, circle,draw,fill, label={above:\{3\}}] at (-2,2.3) (v3) {};
\node[invisnode] at (-4,0.5) (v4) {};
\node[invisnode] at (-1, -2) (v5) {};
\node[] at (-3, -1) {\Huge{$\ddots$}};
\node[] at (-1.5, 0.2) {\Large{$W$}};

\draw[black] (v1)--(v2);
\draw[black] (v2)--(v3);
\draw[black] (v3) to [out=180,in=80,looseness=1] (v4);
\draw[black] (v1) to [out=-90,in=20,looseness=1] (v5);    
\end{tikzpicture}
\end{center}
\caption{Exceptional canvases of type (i), (ii), and (iii), together with potential partial list assignments. Vertices in $S$ are black; vertices in $A$ are drawn as four-pointed stars. $W$ indicates the presence of a generalized wheel subgraph.}
    \label{fig:bad_cases}
\end{figure}
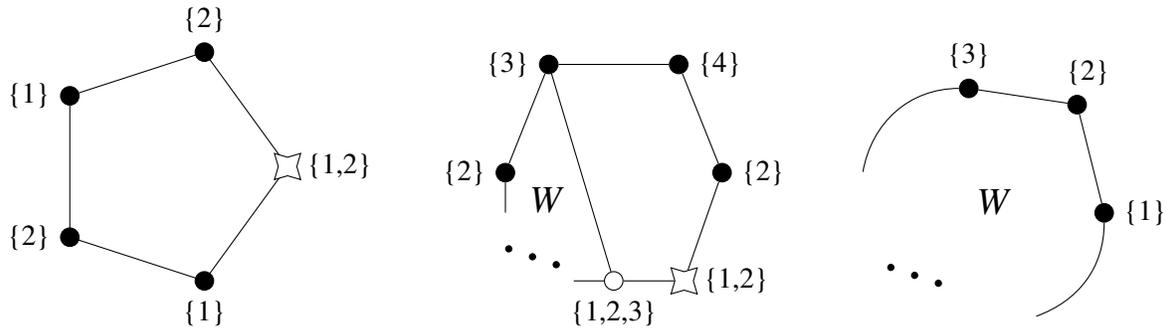

We highlight the similarity between our technical theorem and the technical theorems used by Thomassen in proving Theorems \ref{5choosable} and \ref{3choosable}: these technical theorems are rephrased below using our terminology.

\begin{thm}[Thomassen \cite{thomassen5LC}]\label{tech5choos} If $(G, L, S, A)$ is a canvas where $G$ is a near-triangulation and $S$ is a path of length at most one in the outer face boundary of $G$, then $G$ admits an $L$-colouring.
\end{thm}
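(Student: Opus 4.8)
The plan is to deduce Theorem \ref{tech5choos} from Theorem \ref{345colouring}: essentially all the work has been absorbed into the latter, so what remains is to unwind the definitions and check that its hypotheses hold. Two observations make this go through. First, a near-triangulation has no vertices of large girth: every vertex lies on a bounded face, and such a face is a triangle, so $\g(v) = 3$ for every $v \in V(G)$ (the degenerate cases $G = K_1, K_2$ being trivial). In particular, writing $C$ for the outer face boundary of $G$, no vertex of $V(C) \setminus V(S)$ has $\g(v) \geq 5$, so the requirement that $A \setminus V(S)$ consist of girth-$\geq 5$ vertices of $V(C) \setminus V(S)$ forces $A \setminus V(S) = \emptyset$. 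With $A \subseteq V(S)$, the canvas conditions collapse to precisely Thomassen's hypotheses: the non-precoloured vertices on the outer boundary have lists of size at least $3$, and the interior vertices --- all of girth $3$ --- have lists of size at least $5$. Second, $S$ is a path with $|V(S)| \leq 2 \leq 3$, so it is an acceptable path by the first clause of the definition, and since each exceptional canvas of type (i), (ii), or (iii) has a precoloured path on three or four vertices, a canvas with $|V(S)| \leq 2$ is automatically unexceptional.

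With these in hand, Theorem \ref{345colouring} applies to $(G, L, S, A)$ and says that every $L$-colouring of $G[V(S)]$ extends to an $L$-colouring of $G$. Since $G[V(S)]$ is a path on at most two vertices, it has an $L$-colouring (taking the precolouring of $S$ to be proper, as is implicit in the statement of Theorem \ref{tech5choos} --- when $S$ is an edge with two equal singleton lists there is no $L$-colouring of $G$ at all), and extending this colouring completes the proof.

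In this derivation there is no real obstacle: it is a routine check, and the difficulty lives entirely in Theorem \ref{345colouring}, whose proof is the bulk of the paper. For completeness I would also record the self-contained alternative, which is in essence Thomassen's original proof. Take a counterexample minimising $|V(G)|$ and reduce to the $2$-connected case, so the outer face is bounded by a cycle $v_1 v_2 \cdots v_p v_1$ with $v_1 v_2$ the precoloured edge, coloured by $\phi$. If this cycle has a chord, split $G$ along it into two strictly smaller near-triangulations, colour the piece containing $v_1 v_2$ by minimality, and then colour the other piece (which now has two adjacent precoloured vertices) by minimality. If there is no chord, choose distinct colours $c_1, c_2 \in L(v_p) \setminus \{\phi(v_1)\}$ (possible since $|L(v_p)| \geq 3$), delete $v_p$, remove $\{c_1, c_2\}$ from the lists of the neighbours of $v_p$ other than $v_1$ and $v_{p-1}$ (these neighbours are all interior, hence had lists of size at least $5$, so they still have lists of size at least $3$ and the hypotheses are preserved), apply minimality to the resulting near-triangulation, and finally colour $v_p$ with whichever of $c_1, c_2$ is not used on $v_{p-1}$. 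In this version the only delicate point is the bookkeeping around chords and the small base cases ($G$ a triangle or an edge) --- which, in the more general setting of Theorem \ref{345colouring}, is precisely where the exceptional canvases come from.
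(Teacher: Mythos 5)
Your derivation is correct and is exactly the paper's own route: the paper states the implication from Theorem \ref{345colouring} in one line, noting only that in a near-triangulation every vertex has girth three so that $A$ is forced to be empty, and you have simply written out the unwinding (acceptability of $S$, unexceptionality for $|V(S)| \leq 2$, and the remark that the precolouring of $S$ must be proper for the statement to make sense). The appended self-contained argument is Thomassen's original proof, which the paper does not reproduce, but it is accurate and adds nothing that needs checking against the text.
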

This theorem implies Theorem \ref{5choosable}, that every planar graph is 5-choosable, and is implied by Theorem \ref{345colouring}. We note that in a near triangulation, every vertex has girth three. Consequently, in the above theorem $A = \emptyset$ by definition.

Thomassen also characterized the set of canvases $(G, L, S, A)$ where $G$ is a near-triangulation and $S$ a path of length two that do \emph{not} necessarily admit an $L$-colouring. Thomassen's original theorem is Theorem 3 in \cite{thomassen2007exponentially}. We state it below in the language of canvases.

\begin{thm}[Thomassen \cite{thomassen2007exponentially}]\label{thomassen3ext}
If $K = (G, L, S, A)$ is a canvas where $G$ is a near-triangulation and $S$ is a path of length two in the outer face boundary of $G$, then $G$ admits an $L$-colouring unless $K$ is an exceptional canvas of type (iii).
\end{thm}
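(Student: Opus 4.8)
\textbf{Proof proposal for Theorem \ref{thomassen3ext}.}

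The plan is to deduce this from the main technical theorem, Theorem \ref{345colouring}, by checking that the exceptional canvases of types (i) and (ii) cannot occur when $G$ is a near-triangulation. So let $K=(G,L,S,A)$ be a canvas with $G$ a near-triangulation and $S=v_1v_2v_3$ a path of length two on the outer face boundary of $G$. First I would observe that $S$ is automatically an acceptable path, since $k=3\le 3$, so Theorem \ref{345colouring} applies: either every $L$-colouring of $G[V(S)]$ extends to an $L$-colouring of $G$, or $K$ is an exceptional canvas of type (i), (ii), or (iii). Moreover, since $S$ is a path (not a 4-cycle) the delicate sentence in the remark following Theorem \ref{345colouring} about $V(S)$ inducing a $4$-cycle does not apply: in fact here $|V(S)|=3$ and $S$ has length two, so any $L$-colouring of $S$ \emph{is} an $L$-colouring of $G[V(S)]$ as soon as $v_1v_3\notin E(G)$, and if $v_1v_3\in E(G)$ then $v_1v_2v_3v_1$ is a triangle and we simply precolour the triangle; in either case the statement ``$G$ admits an $L$-colouring'' follows from the conclusion of Theorem \ref{345colouring}. (Strictly, the theorem statement already phrases the conclusion as ``$G$ admits an $L$-colouring'', matching what we want to prove, so no extra bookkeeping is needed.)

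It therefore remains to rule out exceptional canvases of types (i) and (ii). Both of these require $k=4$, i.e.\ $S$ has length three, whereas by hypothesis $S=v_1v_2v_3$ has length two. Hence neither (i) nor (ii) can occur, and the only possible exception is type (iii). I would spell this out: in an exceptional canvas of type (i) or (ii) the path $S$ is written $v_1v_2v_3v_4$ with four distinct vertices, which is impossible when $|V(S)|=3$. Thus if $G$ does not admit an $L$-colouring, Theorem \ref{345colouring} forces $K$ to be an exceptional canvas of type (iii), as claimed. As a sanity check one can note that type (iii) is indeed consistent with $G$ being a near-triangulation: there $S$ is the principal path of a generalized wheel $W$ whose outer cycle lies on the outer face boundary, and a generalized wheel is a near-triangulation, so this is exactly Thomassen's original obstruction (a wheel with suitably arranged lists of size $3$ around the rim, as in the third picture of Figure \ref{fig:bad_cases}).

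I do not expect any serious obstacle here: the content of Theorem \ref{thomassen3ext} is entirely contained in Theorem \ref{345colouring}, and the reduction is just a matter of matching definitions — checking that a length-two path on the outer face boundary of a near-triangulation is an acceptable path (immediate), that $A=\emptyset$ or at least that the ``$|L(v)|=2$, $\g(v)\ge 5$'' vertices of $A$ are irrelevant in a near-triangulation where every vertex has girth $3$ (so in fact $A\setminus V(S)=\emptyset$, which incidentally gives a second, cleaner reason why exceptions (i) and (ii) cannot occur, as both explicitly require a vertex $u\in A\setminus V(S)$), and that the near-triangulation hypothesis makes the list conditions on interior vertices vacuous beyond $|L(v)|\ge 5$. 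The one mild subtlety worth a sentence is the degenerate possibility $v_1=v_3$ or a chord $v_1v_3$; since $S$ has length two and $G$ is $2$-connected as a near-triangulation with a precoloured path, $v_1\ne v_3$, and a chord $v_1v_3$ only helps (it just means we are $L$-colouring a precoloured triangle), so it does not interfere with the argument.
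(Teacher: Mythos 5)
Your reduction of the statement to Theorem \ref{345colouring} is, as a piece of pure logic, essentially right (ruling out exceptions (i) and (ii) either because they need $k=4$ or because they need a vertex of $A\setminus V(S)$, which cannot exist in a near-triangulation where every vertex has girth three, so $A=\emptyset$). But as a proof of Theorem \ref{thomassen3ext} in the context of this paper it is circular, and that is a genuine gap. Theorem \ref{thomassen3ext} is not proved in the paper at all: it is Thomassen's Theorem 3 from \cite{thomassen2007exponentially}, quoted as a known prior result, and it is invoked as an ingredient inside the proof of Theorem \ref{345colouring} itself. Concretely, the proof of Lemma \ref{deg3forv4} ends by observing that the remaining graph $G$ is a near-triangulation with $S$ a path of length two and concluding via Theorem \ref{thomassen3ext} that $G$ admits an $L$-colouring; this case cannot be handled by the minimality of the counterexample, since the graph in question is $G$ itself. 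So you cannot establish Theorem \ref{thomassen3ext} by specializing Theorem \ref{345colouring}: the latter's proof presupposes the former. What is actually required (and what the paper implicitly relies on) is Thomassen's independent inductive argument for near-triangulations with a precoloured path of length two, which your proposal does not supply.

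A secondary, smaller point: Theorem \ref{345colouring} only asserts that every $L$-colouring of $G[V(S)]$ extends, so to conclude that ``$G$ admits an $L$-colouring'' one also needs an $L$-colouring of $G[V(S)]$ to exist (the lists on $S$ have size at least one and could in principle conflict along $S$ or a chord $v_1v_3$). You gesture at this, but the honest reading is that the statement, like Thomassen's original, is about proper precolourings of $S$; in any case this bookkeeping issue is dwarfed by the circularity above.
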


We note that Theorem \ref{thomassen3ext} is also a special case of Theorem \ref{345colouring} where $G$ is a near-triangulation (and so $A= \emptyset$).

One technical theorem that may be used to establish Theorem \ref{3choosable} is as follows. 

\begin{thm}\label{tech3choos}
If $K=(G, L, S, A)$ is a canvas where $S$ is an acceptable path and every vertex in $G$ has girth at least $5$, then every $L$-colouring of $S$ extends to an $L$-colouring of $G$ unless $K$ is an exceptional canvas of type (i).
\end{thm}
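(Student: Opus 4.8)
The plan is to derive Theorem~\ref{tech3choos} as a special case of the main technical result, Theorem~\ref{345colouring}, by checking that under the additional hypothesis that every vertex of $G$ has girth at least $5$, exceptional canvases of type (ii) and (iii) cannot arise, and that the girth hypothesis forces any acceptable path to have length at most one more than it would in the general setting in a harmless way. First I would observe that if every vertex of $G$ has girth at least $5$, then $G$ contains no triangles and no $4$-cycles at all; in particular $G$ has no generalized wheel as a subgraph, since every vertex of a generalized wheel has girth $3$ (as noted in the excerpt just after the definition of generalized wheels). This immediately rules out exceptional canvases of types (ii) and (iii), both of whose definitions require the presence of a generalized wheel $W$ whose outer cycle lies on the outer face boundary.

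Next I would address the shape of $S$. Since $S$ is an acceptable path $v_1 v_2 \cdots v_k$ and every vertex has girth at least $5$, the third bullet in the definition of acceptable path is automatically satisfied whenever $k = 4$ (indeed $\g(v_2) \geq 5$), so in this setting "$S$ is an acceptable path" is equivalent to "$S$ has at most $4$ vertices." I would then simply invoke Theorem~\ref{345colouring} with the canvas $K = (G, L, S, A)$: every $L$-colouring $\phi$ of $G[V(S)]$ extends to an $L$-colouring of $G$ unless $K$ is exceptional of type (i), (ii), or (iii). By the previous paragraph types (ii) and (iii) are impossible, so the only obstruction that survives is type (i), which occurs only when $k = 4$, there is a vertex $u \in A \setminus V(S)$ adjacent to both $v_1$ and $v_4$, and $L(u) = L(v_1) \cup L(v_4)$. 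This is exactly the exceptional case allowed in the statement of Theorem~\ref{tech3choos}. Finally I would note the one subtlety flagged in the excerpt: there is a unique situation in which $(G, L, S, A)$ is unexceptional and $S$ admits an $L$-colouring but $G[V(S)]$ need not — namely when $S$ is a path of length three whose vertex set induces a $4$-cycle; but under the girth-$\geq 5$ hypothesis there are no $4$-cycles, so $G[V(S)] = S$ whenever $S$ is a path, and "every $L$-colouring of $S$" and "every $L$-colouring of $G[V(S)]$" coincide. Hence the statement follows verbatim.

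The main (and essentially only) obstacle is verifying carefully that the girth hypothesis eliminates exactly types (ii) and (iii) and nothing that is needed — i.e.\ confirming that no generalized wheel, not even a degenerate $3$-cycle, can sit inside a graph of girth at least $5$, and that the independence/girth conditions on $A$ in the canvas definition are compatible with the hypotheses of Theorem~\ref{tech3choos} (they are, since the canvas definition already demands $\g(v) \geq 5$ for $v \in A$). Everything else is bookkeeping: once Theorem~\ref{345colouring} is in hand, Theorem~\ref{tech3choos} is a direct corollary obtained by discarding the impossible exceptional cases.
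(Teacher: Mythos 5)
Your proposal is correct and matches the paper's treatment: the paper simply notes that Theorem~\ref{tech3choos} is a special case of Theorem~\ref{345colouring}, and your verification that the girth-$\geq 5$ hypothesis excludes generalized wheels (hence exceptional canvases of types (ii) and (iii)) and forces $G[V(S)] = S$ supplies exactly the routine details behind that remark.
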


Thomassen's true technical theorem forbids edges between $S$ and $A$, but allows $S$ to have up to six vertices. However, as mentioned in the conclusion of \cite{thomassen3LCnew}, this is essentially equivalent to Theorem \ref{tech3choos}. Note that Theorem \ref{tech3choos} is a special case of Theorem \ref{345colouring}, and it implies Theorem \ref{3choosable} (and moreover Gr\"{o}tzsch's Theorem, as is shown in \cite{thomassen3LCnew}).

A specific type of exceptional canvases of type (iii) appears frequently in the induction arguments in our paper. The following lemma will allow us to deal with them painlessly. It is Lemma 1 by Thomassen in \cite{thomassen2007exponentially}. The proof is approximately two thirds of a page; we omit it in the interest of brevity.

\begin{lemma}[Thomassen \cite{thomassen2007exponentially}]\label{thomassenbadcol}
Let $W$ be a generalized wheel but not a broken wheel with outer cycle $C = v_1v_2v_2 \dots v_q$. If $L$ is a list assignment for $W$ such that every vertex $v \in \Int(C)$ has $|L(v)| \geq 5$ and every very vertex $v \in V(C) \setminus \{v_1, v_2, v_3\}$ has $|L(v)| \geq 3$, then there is at most one colouring of $v_1, v_2, v_3$ that does not extend to an $L$-colouring of $G$. 
\end{lemma}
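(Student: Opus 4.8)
\textbf{Proof plan for Lemma \ref{thomassenbadcol}.}
The plan is to induct on $|V(W)|$, unwinding the recursive definition of a generalized wheel. First I would dispose of the base cases: if $W$ is a wheel, say with hub $u$ and outer cycle $C = v_1 v_2 \cdots v_q$, then once $v_1, v_2, v_3$ are coloured, one first colours the hub $u$: the colour of $u$ must avoid $\phi(v_2)$ (and possibly $\phi(v_1), \phi(v_3)$), so $u$ has at least two admissible colours unless $|L(u)| = 5$ and its list is forced, in which case there is a unique ``bad'' colour for $u$; but even after fixing $\phi(u)$, the outer path $v_3 v_4 \cdots v_q v_1$ is a path each of whose internal vertices has a list of size $\geq 3$ losing at most $2$ colours (one from $u$, one from its predecessor on the path), hence extends greedily, and $v_q$ has list size $\geq 3$ losing at most $3$ colours (from $u$, $v_{q-1}$, and $v_1$) — so one checks carefully that the only obstruction is a single forced colouring of $v_1, v_2, v_3$. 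The case $W$ a triangle is immediate since there is nothing to extend. Note the hypothesis explicitly excludes broken wheels, which is what lets us avoid the genuinely two-colour-obstruction case.

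For the inductive step, suppose $W$ is obtained by gluing generalized wheels $W'$ (principal path $v_1 v_2 v_3$, after relabelling so that the glued principal edge of $W'$ is $v_2 v_3$) and $W''$ (principal path with middle vertex $v_2$) along the edge $v_2 v_3$, so that the principal path of $W$ uses $v_1 v_2$ and one principal edge of $W''$. Given a colouring $\phi$ of the principal path of $W$, I would first extend it across $W''$: by the inductive hypothesis applied to $W''$ (with principal path given by the two unglued principal edges of $W''$, whose middle vertex is $v_2$), this extension fails for at most one colouring of those three vertices — but here two of the three are already coloured by $\phi$, so the extension to $W''$ succeeds unless the colour $\phi$ assigns to the third (which is $v_3$, the remaining principal vertex of $W''$) is a unique bad value; in particular for all but at most one choice we obtain a colouring of $W''$, which in particular colours $v_2$ and $v_3$. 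Then I apply the inductive hypothesis to $W'$ with principal path $v_1 v_2 v_3$: the colouring of $v_1, v_2, v_3$ is now determined, and it extends to $W'$ unless it is the unique bad colouring for $W'$. The subtlety is that we must produce a colouring of $v_3$ in $W''$ that is simultaneously not the bad value for $W'$; I would handle this by noting $v_3 \notin \{v_1,v_2,v_3\}$-of-$W''$'s-principal-path is false — rather $v_3$ \emph{is} a principal vertex of $W''$ — so actually $v_3$'s colour is part of the input $\phi$, not something we choose, which streamlines the argument: the colouring of the principal path of $W$ determines $\phi(v_1), \phi(v_2)$ and $\phi$ of the $W''$-principal vertex, and $v_3$ gets whatever value is forced. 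So in fact the whole extension, through $W''$ then through $W'$, is a deterministic process given $\phi$, and it fails only if $\phi$ restricted to the relevant triple is one of the (at most one, by induction) bad colourings of $W''$, or the induced colouring of $v_1 v_2 v_3$ is the bad colouring of $W'$; a short counting argument — the map from colourings of the principal path of $W$ to colourings of $v_1 v_2 v_3$ compatible with a fixed valid $W''$-extension is injective on the relevant set — shows these combine to at most one bad colouring of the principal path of $W$.

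The main obstacle I anticipate is bookkeeping the principal-path relabellings correctly: a generalized wheel has several possible principal paths, and at each gluing one must track which principal edge was identified and ensure the vertex playing the role of ``$v_2$'' (the identified middle vertex, which always has a size-$\geq 5$ or precoloured status only at the principal triple) is handled consistently, while all other outer-cycle vertices keep list size $\geq 3$ and all interior vertices keep list size $\geq 5$. The second delicate point is verifying in the wheel base case that the hub plus the outer path really do leave at most one bad colouring rather than two — this is where the exclusion of broken wheels is essential, and it requires checking that $v_q$ (adjacent to the hub, to $v_{q-1}$, and to $v_1$, hence potentially losing three colours from a size-$3$ list) is still colourable because the hub's colour and $v_1$'s colour may coincide or because we have freedom in the hub choice; a careful case analysis on $|L(u)|$ and on whether the three neighbours' colours are distinct closes this. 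Since this is exactly Lemma 1 of \cite{thomassen2007exponentially}, I would in the actual writeup simply cite it; the sketch above indicates the route one would take to reprove it.
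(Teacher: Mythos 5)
The paper gives no proof of this lemma: it cites Lemma~1 of~\cite{thomassen2007exponentially} and explicitly omits the argument. You propose to do the same, so on that narrow reading the two agree, and your sketch's overall strategy (induction along the recursive decomposition of generalized wheels) is indeed the natural route.

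However, your sketch contains a gap that would defeat it if you actually tried to expand it. When $W$ decomposes along a chord $v_2 z$ into pieces $W'$ and $W''$, one or both of those pieces can be a broken wheel; gluing a wheel to a broken wheel along a principal edge already produces a generalized wheel that is neither a wheel nor a broken wheel. The lemma you are inducting on explicitly excludes broken wheels --- and must, because a broken wheel can have one bad colouring of its principal path for essentially each colour of the middle vertex --- so the inductive hypothesis is unavailable for a broken-wheel piece. Closing the induction requires a companion statement for broken wheels (roughly: fixing the colours of the middle vertex and of one end of the principal path leaves at most one bad colour for the other end), together with a case split on which of $W', W''$ is broken. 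A secondary issue: even when both pieces are non-broken, the counting you gesture at ($\geq 2$ admissible colours for $z$, at most one bad for each piece) does not show that a \emph{given} $\phi$ extends, since with $|L(z)| = 3$ the two bad colours can exhaust both admissible choices; the correct argument instead compares two hypothetical failing colourings $\phi, \psi$ of $v_1, v_2, v_3$ and uses the uniqueness of each piece's bad triple to force $\phi = \psi$. Finally, you write as though the glued vertex is already precoloured by $\phi$ and the extension through $W''$ is a ``deterministic process''; it is not on $W$'s principal path, and its colour is a genuine choice, not part of the input. Since you ultimately cite Thomassen, none of this affects the paper, but the sketch as written would not survive being filled in.
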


Note that Lemma \ref{thomassenbadcol} and Theorem \ref{345colouring} together give us the following lemma. It is an analogous lemma to Lemma \ref{thomassenbadcol} for local girth list assignments. 
\begin{lemma}\label{bad-colouring}
Let $G$ be a plane graph, and let $S = v_1v_2v_3$ be a path on the outer face boundary of $G$. Suppose $G$ contains a subgraph $W$ that is a generalized wheel whose principal path is $v_1v_2v_3$ such that the vertices on the outer cycle of $W$ are on the outer face boundary of $G$. Let $(G, L, S, A)$ be a canvas. If $W$ is not a broken wheel, then there is at most one $L$-colouring of $G[V(S)]$ that does not extend to an $L$-colouring of $G$.
\end{lemma}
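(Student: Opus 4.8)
The plan is to reduce Lemma~\ref{bad-colouring} to Lemma~\ref{thomassenbadcol} by passing from $G$ to the generalized wheel $W$ itself, using Theorem~\ref{345colouring} to control the colourings of $G[V(S)]$ that fail to extend. First I would observe that since $W$ is a generalized wheel, every vertex of $W$ has girth $3$; in particular every vertex $v$ on the outer cycle $C$ of $W$ has $\g_G(v) = 3$, so the canvas definition only forces $|L(v)| \geq 1$ when $v \in V(S)$ and $|L(v)| \geq 3$ when $v \in V(C) \setminus V(S)$ (note $v$ cannot lie in $A$, as vertices of $A$ have girth at least $5$). Similarly, interior vertices of $C$ lie off the outer face boundary of $G$ and have girth $3$, hence $|L(v)| \geq 5$ for each $v \in \Int_G(C)$. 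Thus the restriction of $L$ to $W$ satisfies the hypotheses of Lemma~\ref{thomassenbadcol}, with the possible nuisance that some outer-cycle vertices outside $\{v_1,v_2,v_3\}$ might have $|L(v)| > 3$; truncating those lists to size exactly $3$ only shrinks the set of extendable colourings, so without loss of generality $|L(v)| = 3$ there, and Lemma~\ref{thomassenbadcol} gives: at most one $L$-colouring of $v_1, v_2, v_3$ fails to extend to an $L$-colouring of $W$.

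Next I would transfer this back to $G$. Let $\phi$ be an $L$-colouring of $G[V(S)]$ — here I should be slightly careful, since $V(S) = \{v_1, v_2, v_3\}$ induces a path $v_1v_2v_3$ possibly together with the chord $v_1v_3$ if $W$ is a triangle or a wheel on three outer vertices; in any case an $L$-colouring of $G[V(S)]$ is in particular a proper colouring of the path $v_1v_2v_3$ from the lists, which is exactly the object Lemma~\ref{thomassenbadcol} speaks about. If $\phi$ extends to an $L$-colouring of all of $W$, then in particular $\phi$ restricted to the principal path is a colouring of the principal path that extends over $W$; but I want an extension over $G$, not over $W$, so this direction needs Theorem~\ref{345colouring} rather than Lemma~\ref{thomassenbadcol}. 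The cleanest route: apply Theorem~\ref{345colouring} to the canvas $(G, L, S, A)$ with $S = v_1v_2v_3$, which is an acceptable path since $k = 3$. Theorem~\ref{345colouring} says $\phi$ extends to an $L$-colouring of $G$ unless $(G, L, S, A)$ is an exceptional canvas of type (iii). So the colourings of $G[V(S)]$ that fail to extend over $G$ are nonempty only if $(G,L,S,A)$ is exceptional of type (iii), i.e.\ $S$ is the principal path of a generalized wheel $W'$ whose outer cycle lies on the outer face boundary of $G$ and every outer vertex of $W'$ besides $v_1, v_2, v_3$ has list size exactly $3$.

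It then remains to bound, in that exceptional case, the number of non-extending colourings by one. Here I would argue that $W'$ — the wheel witnessing exceptionality of the canvas — must itself not be a broken wheel, or else there is no obstruction at all. More precisely: in a type (iii) exceptional canvas the set of $L$-colourings of $S$ that do not extend to $G$ is exactly the set that does not extend to $W'$ (this is the content of the characterization referenced after Theorem~\ref{345colouring}, via \cite{thomassen2007exponentially, postlethomas3}: the only obstruction to extending over $G$ lives inside $W'$), so if $W'$ is not a broken wheel we may apply Lemma~\ref{thomassenbadcol} to $W'$ to get at most one such colouring. The remaining worry is whether $W'$ could be a broken wheel while the given $W$ is not; but the hypothesis of Lemma~\ref{bad-colouring} hands us a specific non-broken generalized wheel $W$ with principal path $v_1v_2v_3$ inside $G$, and I would argue that the presence of $W$ forces any witnessing $W'$ to also be non-broken (a broken wheel on the principal path $v_1v_2v_3$ has all non-principal outer vertices adjacent to $v_2$, which is incompatible with $W$ already occupying the interior/outer region bounded by $v_1v_2v_3$ on the outer face — essentially $W \subseteq \Int[C']$ would contradict the outer-boundary condition unless $W = W'$). \textbf{The main obstacle} I anticipate is precisely this bookkeeping: reconciling the wheel $W$ handed to us by the lemma's hypothesis with the wheel $W'$ produced by the type-(iii) clause of Theorem~\ref{345colouring}, and making rigorous the claim that the only colourings of $S$ failing to extend over $G$ are those failing to extend over the relevant wheel. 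If that identification is awkward, the fallback is to invoke Theorem~\ref{345colouring} directly to show the non-extending colourings of $G[V(S)]$ form a subset of the non-extending colourings of $W$ (after list truncation), and then apply Lemma~\ref{thomassenbadcol} to $W$.
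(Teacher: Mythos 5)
There is a genuine gap, and it sits exactly where you flagged it: the transfer step from ``$\phi$ extends over a wheel'' to ``$\phi$ extends over $G$'' is never actually proved, and neither of your two routes closes it. In your main route you assert that for a type-(iii) witness $W'$ the colourings of $S$ that fail to extend over $G$ are \emph{exactly} those that fail to extend over $W'$, citing the characterization alluded to after Theorem~\ref{345colouring}; but the paper does not establish such a statement (the cited remark concerns which list assignments on exceptional canvases admit no colouring at all), and the claim is false for an \emph{arbitrary} witness: type-(iii) witnesses are not unique. For instance, whenever $v_1v_3\in E(G)$ the triangle $v_1v_2v_3$ itself is a legitimate witness (it is a broken wheel, and the list condition on its other outer vertices is vacuous), yet every $L$-colouring of $G[V(S)]$ trivially extends over that triangle while plenty may fail to extend over $G$. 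The same non-uniqueness undercuts your companion claim that the presence of the non-broken $W$ forces every witness $W'$ to be non-broken. Your fallback (``invoke Theorem~\ref{345colouring} directly to show the non-extending colourings for $G$ are a subset of those for $W$'') is the right statement to aim for, but ``directly'' cannot mean applying the theorem to $(G,L,S,A)$, since that canvas may well be exceptional of type (iii) --- that is the whole point of the lemma.

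What is missing is the decomposition that the paper's one-line remark (``Lemma~\ref{thomassenbadcol} and Theorem~\ref{345colouring} together give us the following lemma'') is implicitly relying on. After using Lemma~\ref{thomassenbadcol} on $W$ (your first paragraph is fine here; the truncation to lists of size exactly $3$ is unnecessary since the lemma only asks for $|L(v)|\ge 3$, and the assertion that the hub-side vertices of $W$ avoid the outer face boundary of $G$ deserves a sentence), you must show that any $L$-colouring of $W$ extends to the rest of $G$. For that, split $G$ away from $W$: each piece of $G$ attached outside the outer cycle of $W$ meets $W$ in at most two consecutive outer-cycle vertices, so with those vertices precoloured it is a canvas whose precoloured path has at most two vertices, hence unexceptional, and Theorem~\ref{345colouring} extends the colouring; each piece lying inside an internal (triangular) face $T$ of $W$ is handled as in the proof of Lemma~\ref{sep3cycle} (delete one vertex of $T$, remove its colour from the lists of its neighbours in $\Int(T)$, collect the girth-$\ge 5$ vertices whose lists drop to size two into the new $A$-set, and apply Theorem~\ref{345colouring} to the resulting unexceptional canvas with a two-vertex precoloured path); note that applying Theorem~\ref{345colouring} to $\Int[T]$ with the whole triangle precoloured does not work as stated, because that canvas is formally exceptional of type (iii) via the triangle itself. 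With this glue in place the count follows from Lemma~\ref{thomassenbadcol} alone, and the detour through the witness $W'$ is not needed at all.
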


We will not use Lemma \ref{bad-colouring} in our paper: our inductive arguments will allow us to use Theorem \ref{thomassenbadcol} directly. 

We require one more definition.
\begin{definition}\label{seppathdef}
Let $G$ be a connected plane graph with outer face boundary walk $C$. Let $P$ be a path with endpoints $u,v$, where $\{u,v\} \subseteq V(C)$. We say \emph{$P$ separates $G$ into two plane graphs $G_1$ and $G_2$} if the following hold.
\begin{itemize}
    \item $G_1$ and $G_2$ inherit the embedding of $G$,
    \item $G_1 \cap G_2 = P$,
    \item $G_1 \cup G_2 = G$, 
    \item for each $i \in \{1,2\}$, we have that $V(G) \setminus V(G_i) \neq \emptyset$, 
    \item both $G_1$ and $G_2$ are connected, and
    \item $P$ is in the outer face boundary walk of both $G_1$ and $G_2$.
\end{itemize}
\end{definition}
\subsection{Outline of Paper}
Theorem \ref{345colouring} is proved in Section 3, by supposing the existence of a minimum counterexample. To that end, for the remainder of the paper we let $K=(G, L, S, A)$ be a counterexample to Theorem \ref{345colouring}, chosen such that $|V(G)|$ is minimized over all counterexamples of Theorem \ref{345colouring}; and subject to that, such that $\sum_{v \in V(G)} |L(v)|$ is minimized. In Section 2, we establish necessary structural properties of $K$. Note that by our choice of $K$, it follows that $S$ contains at least one vertex. We may assume that there exists at least one colouring of $G[V(S)]$ that does not extend to a colouring of $G$, as otherwise there is nothing to prove. Since $K$ is chosen to minimize $\sum_{v \in V(G)} |L(v)|$, it follows that $|L(v)| = 1$ for each vertex $v \in V(S)$. To derive a contradiction (and hence prove Theorem \ref{345colouring}), it thus suffices to show that $G$ admits an $L$-colouring.

\section{A Description of Our Minimum Counterexample}

Recall that $K = (G, L, S, A)$ is a counterexample to Theorem \ref{345colouring}. We may assume without loss of generality that $V(S) \cap A = \emptyset$. Let $C$ be the subgraph of $G$ whose vertex set and edge set is precisely that of the outer face boundary walk of $G$.  It is straightforward to verify the theorem holds if $|V(G)| = |V(S)|$, so we assume $|V(G)| > |V(S)|$. We note the following.

\begin{obs}\label{subcanvas}
If $K$ is an unexceptional canvas and $K'$ is a subcanvas of $K$, then $K'$ is also unexceptional. 
\end{obs}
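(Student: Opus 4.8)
The plan is to argue the contrapositive. Suppose the subcanvas $K' = K[H] = (H, L, S\cap H, A\cap V(H))$ is not unexceptional, i.e.\ it is an exceptional canvas of type (i), (ii), or (iii); I will show that $K = (G,L,S,A)$ is an exceptional canvas of the \emph{same} type. The point is that exceptional-ness is witnessed by substructures — a generalized wheel $W$ with a prescribed principal path and outer cycle, and (for types (i) and (ii)) one or two distinguished vertices with prescribed adjacencies and lists — and since $H\subseteq G$, every such substructure of $H$ is one of $G$ as well. So the argument reduces to checking, in three short cases, that each clause in the definition of an exceptional canvas transfers from $H$ to $G$.

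I would dispose of the routine transfers first. Since $H\subseteq G$, a generalized wheel in $H$ is a generalized wheel in $G$ with the same principal path and outer cycle, and $E(H)\subseteq E(G)$; in particular, since every vertex of a generalized wheel has girth $3$, every vertex $v$ of $W$ has $\g_G(v) = 3$. Since $L$ is literally the same list assignment in $K$ and $K'$, all conditions on list sizes, and the equation $L(u) = L(v_1)\cup L(v_4)$, transfer verbatim. Since $V(S\cap H) = V(S)\cap V(H)$, a vertex of $H$ not in $V(S\cap H)$ is not in $V(S)$, which handles the clauses ``$u\notin V(S)$'' and ``$w\notin V(S)$''; and $A\cap V(H)\subseteq A$ handles ``$u\in A$''.

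The only clause that requires an idea is ``every vertex of the outer cycle of $W$ lies on the outer face boundary of $G$'': a vertex on the boundary of $H$ need not be on the boundary of $G$. Here I would play the canvas axioms for $K$ off against the list sizes: a vertex $v$ of the outer cycle of $W$ other than the principal-path vertices has $|L(v)| = 3$ and, as just noted, $\g_G(v) = 3$, so if $v$ were an interior vertex of $G$ the canvas conditions for $K$ would force $|L(v)|\ge 5$, a contradiction; hence $v\in V(C)$. The principal-path vertices of $W$ lie in $V(S)\subseteq V(C)$ by definition of a canvas, so the whole outer cycle of $W$ lies on the boundary of $G$; in type (ii) the same argument places the vertex $w$ — which lies on the outer cycle of the relevant wheel with $|L(w)| = 3$ and $\g_G(w) = 3$ — on the boundary of $G$.

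It remains to pin down the shape of $S$: that $k = |V(S)| = 4$ in types (i) and (ii), and that $S$ is a three-vertex \emph{path} in type (iii). Since $S\cap H\subseteq S$ and $S$ is acceptable — and an acceptable path, or an acceptable cycle with an edge removed, has at most four vertices — we get $|V(S)|\le 4$, hence $k = 4$ in types (i) and (ii), where the transferred data then witness exceptional-ness. For type (iii) we have $|V(S\cap H)| = 3$, so $|V(S)|\in\{3,4\}$, and I would rule out the bad cases. If $|V(S)| = 4$ and $S$ is a path, then $S$ contains the subpath $v_1v_2v_3$, so both internal vertices of $S$ lie in $\{v_1,v_2,v_3\}$ and hence have girth $3$, contradicting acceptability; if $|V(S)| = 4$ and $S$ is a cycle, then $S$ with a suitable edge removed is an acceptable four-vertex path whose two internal vertices must each have girth exactly $4$ (they lie on a $4$-cycle, so have girth at most $4$, and acceptability forbids girth $3$ there), incompatible with $\{v_1,v_2,v_3\}$ consisting of girth-$3$ vertices. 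If $S$ is a triangle $v_1v_2v_3v_1$, then since $S\cap H$ is a three-vertex path, $H$ omits exactly one edge $f$ of the triangle, so $f\notin E(W)$ and the outer cycle $C_W$ of $W$ has at least four vertices; a short planarity argument finishes this, since in the embedding inherited from $G$ the edge $f$ lies either inside or outside the closed disc bounded by $C_W$, and either placement strands $f$ or a vertex of $C_W$ among bounded faces of $G$, contradicting $S\subseteq C$ or $V(C_W)\subseteq V(C)$ (established above). Hence $S$ is a three-vertex path, and $K$ is exceptional of type (iii). I expect this final planarity case, and to a lesser extent the $4$-cycle subcase, to be the only real work; the rest is transport of conditions along $H\subseteq G$.
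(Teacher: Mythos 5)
Your proof is correct and it rests on the same core observation as the paper's (extremely terse) justification: any vertex forced to have a list of size $3$ by the definition of an exceptional canvas lies on a generalized wheel, hence has girth $3$ in $G$, and the canvas axioms for $K$ then force it onto the outer face boundary of $G$. The paper dispenses with the observation in a single sentence recording exactly this fact. Where you go further is in noticing that the clauses ``$k=4$'' and ``$k=3$'' are not automatically inherited: $S\cap H$ can have fewer vertices than $S$, so an exceptional $K'$ does not literally hand you a witness for $K$ without some additional reasoning about what $S$ itself can be. You correctly observe that for types (i) and (ii) this is a non-issue (acceptability caps $|V(S)|$ at $4$, so $k'=4$ forces $k=4$), and you correctly identify type (iii) as the only place where a genuine mismatch could occur, resolving it via acceptability (when $|V(S)|=4$) and a planarity/theta-graph argument (when $S$ is a triangle). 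These cases are real, and the paper's one-liner does not address them; in that sense your write-up is more rigorous. In practice the paper can afford to be cavalier here because by the time Observation~\ref{subcanvas} does heavy lifting, $S$ being a cycle has been ruled out (Lemmas~\ref{sep3cycle} and~\ref{sep4cycle}), and the $k=4$ acceptability argument is routine; but as a freestanding proof of the stated observation, your version closes gaps the paper leaves to the reader. One small stylistic note: your claim that ``the principal-path vertices of $W$ lie in $V(S)$'' is not quite right for type (ii), where $w$ is not in $V(S)$ — but you catch this in the next clause by running the same girth/list-size argument on $w$, so the proof is fine.
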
 

This follows from the fact that whether or not a canvas is exceptional depends in part on $L$, and that every vertex in a generalized wheel has girth three: thus if the outer face boundary of $K'$ contains a vertex $v$ not in the outer face boundary of $K$ with $\g(v) = 3$, then $|L(v)| \geq 5$. We will use this observation repeatedly throughout the paper.

\subsection{Section Overview}
We now give an overview of the results of this section. As this subsection is purely a summary of this section's results compiled for the reader's convenience, the reader should feel free to skip ahead to Subsection \ref{groundworklemmas} if desired.

In Subsection \ref{groundworklemmas}, we give a few lemmas establishing basic properties of $K$: namely, Lemma \ref{2conn} shows that $G$ is 2-connected (and thus, that $C$ is a cycle); and Lemma \ref{chordless}, that its outer cycle $C$ is chordless. These first two lemmas allow us to make useful assumptions about the structure of $G$ later on. 

The majority of Subsection \ref{separatingcyclelemmas} describes the interiors of certain cycles in $G$ of length up to six. In particular, Lemma \ref{sep3cycle} shows that $G$ does not contain cycle of length three with vertices in their interior. Observation \ref{listsizes} argues that we may assume no vertex in $C$ has more than three colours in its list. This allows us to argue explicitly about the lists themselves in Subsection \ref{towardsadeletablepath} and in our final colouring argument in Section 3. Lemma \ref{sep4cycle} shows that $G$ does not contain cycles of length four with vertices in their interior. Observation \ref{kgeq3} shows that $|V(S)| \geq 3$. Lemma \ref{sep5cycle} shows that if $H \subseteq G$ is a 5-cycle with at least one vertex in its interior, then all vertices in $V(H)$ have girth three; and finally, Lemma \ref{sep6cycle} shows that if $H \subseteq G$ is a 6-cycle with at least one vertex in its interior, then no vertex in $V(H)$ has girth at least five. These lemmas are used to establish more complicated structural properties of $G$ (in particular, to rule out the existence of certain generalized wheel subgraphs in Subsection \ref{brokenwheellemmas}). 

In Subsection \ref{seppathlemmas}, we characterize the set of short paths in $G$ whose endpoints lie in $C$ and whose interior vertices do not. We think of these paths as \emph{separating} paths: such a path neatly divides $G$ into two subgraphs whose intersection is the path (see Definition \ref{seppathdef}). In particular, we show in Lemma \ref{intg4s} that $G$ does not contain a separating path of length two that has at least one vertex of girth at least four, and in Lemma \ref{3-5-5-3} we describe exactly the structure surrounding separating paths $u_1u_2u_3u_4$ where both $u_2$ and $u_3$ have girth at least five. 

Subsection \ref{brokenwheellemmas} uses the structure established in previous subsections to rule out the existence of certain generalized wheel subgraphs in $G$, and to bound the size of others. We note that whether or not a generalized wheel subgraph can be ruled out depends not only on its structure, but also its list assignment. Lemma \ref{nogenwheels} shows that if $Q$ is a separating path of length two whose vertices all have girth three, then $Q$ is the principal path of a broken wheel in $G$. Lemma \ref{closeafan} shows that upon identifying certain vertices in a generalized wheel, the girth of the remaining vertices in the graph do not change too much: every vertex with girth at least five in $G$ still has girth at least five after the identification, and every vertex with girth four in $G$ still has girth at least four after the identification. This allows us in Lemma \ref{nobigfans} to rule out some large broken wheels in $G$. Lemma \ref{deg3forv4} rules out yet another type of broken wheel in $G$, and will be used in Section 3 to show that, after performing our main reductions, what remains is not an exceptional canvas of type (iii). 

Finally, Subsection \ref{towardsadeletablepath} establishes the remainder of the structure required for the proof of Theorem \ref{345colouring} in the next section. In particular, Lemma \ref{extraverts1} shows that $C$ contains at least three vertices other than those in $S$, and Lemma \ref{extraverts2} shows that if there is a vertex in $A$ adjacent to a vertex in $S$, then this lower bound can be raised to four. Lemma \ref{colours} describes the lists of these non $S$-vertices. Lastly, we give the definition of a particular type of path contained in $C$ called a \emph{deletable path}: this path comprises the main reducible configuration for the proof of Theorem \ref{345colouring} in Section \ref{345colouring}. We end the section by showing that $K$ contains a deletable path.

\subsection{Groundwork Lemmas}\label{groundworklemmas}

We begin by showing that $G$ is 2-connected. Note that $G$ is connected: otherwise, since $K$ is a minimum counterexample to Theorem \ref{345colouring} we obtain an $L$-colouring of $G$ by applying Theorem \ref{345colouring} to each component of $G$. Moreover, since $K$ was chosen to minimize $\sum_{v \in V(G)} |L(v)|$, we may assume without loss of generality that $|V(S)| \geq 2$. Since $G[V(S)]$ has an $L$-colouring by assumption, we may further assume that $|V(G)| \geq 3$. 

\begin{lemma}\label{2conn}
$G$ is 2-connected.
\end{lemma}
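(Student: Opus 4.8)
The plan is to argue by contradiction, using the minimality of the counterexample $K$ in the standard Thomassen style. Suppose $G$ is not 2-connected. We already know $G$ is connected and $|V(G)| \geq 3$, so $G$ has a cutvertex $x$. Write $G = G_1 \cup G_2$ where $G_1 \cap G_2 = \{x\}$ and each $G_i$ is connected with $|V(G_i)| < |V(G)|$ and $|V(G_i)| \geq 2$; we may choose the embedding so that both $G_i$ inherit the plane embedding of $G$ and $x$ lies on the outer face boundary of each (since $x$, being a cutvertex, lies on the outer face boundary of $G$, and the outer face boundary walk visits $x$).

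Next I would distribute the precoloured path $S$ and the set $A$ across the two pieces. Because $S$ is a subpath (or subcycle) of the outer cycle and $x$ is a cutvertex, $S$ cannot have vertices strictly on both sides: either $S \subseteq G_1$ or $S \subseteq G_2$, possibly with $x \in V(S)$ as a shared endpoint. Say $S \subseteq G_1$ (if $x \notin V(S)$ then $S$ lies entirely in one piece; if $x \in V(S)$, then since $S$ is a path and $x$ is a cutvertex, all of $S$ lies in one piece with $x$ as an endpoint — here I should double-check the case $S$ is a cycle, but then $x$ would be a cutvertex of a graph containing a spanning-ish cycle through the relevant region, which forces $S$ to lie in one piece too, or $G$ is just that cycle plus pendant stuff). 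The key point: $K[G_1] = (G_1, L, S, A \cap V(G_1))$ and $K[G_2] = (G_2, L, S\cap G_2, A \cap V(G_2))$ are both canvases (restriction of a canvas to a subgraph is a canvas, as noted after the canvas definition), and both are unexceptional by Observation \ref{subcanvas}. Moreover $S \cap G_2$ is either empty or the single vertex $x$, so $|V(S \cap G_2)| \leq 1$, which means $K[G_2]$ is automatically unexceptional (all exceptional types require $k \in \{3,4\}$).

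Now I would apply the minimality of $K$ to each piece. First extend $\phi$ (an $L$-colouring of $G[V(S)]$ that does not extend to $G$) to an $L$-colouring $\phi_1$ of $G_1$: this is possible since $|V(G_1)| < |V(G)|$ and $K[G_1]$ is an unexceptional canvas, so Theorem \ref{345colouring} applies to it (there is the one annoying exception noted in the excerpt — $S$ a path of length three whose vertex set induces a $4$-cycle — but in that degenerate situation $G$ is essentially that $4$-cycle plus pieces and a direct argument handles it; I'd flag this). Then $\phi_1$ restricted to $\{x\}$ is an $L$-colouring of $G_2[\{x\}] \supseteq G_2[V(S \cap G_2)]$; if $x \notin V(S)$ we simply treat $x$ as a new precoloured vertex with the singleton list $\{\phi_1(x)\}$, noting this keeps $|L(x)| \geq 1$ and keeps the canvas unexceptional (reducing a list can only make a canvas ``more'' unexceptional here, and $|V(S\cap G_2 \cup \{x\})| = 1$ anyway). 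Apply Theorem \ref{345colouring} to $G_2$ with this precolouring to obtain $\phi_2$ extending $\phi_1|_{\{x\}}$. Since $\phi_1$ and $\phi_2$ agree on $x$, they glue to an $L$-colouring of $G = G_1 \cup G_2$ extending $\phi$, contradicting the choice of $\phi$.

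The main obstacle is bookkeeping rather than a deep idea: one must carefully verify that the pieces $K[G_i]$ genuinely satisfy all five list-size inequalities in the canvas definition — the subtle point being that a vertex with $\g_G(v)=3$ lying on $C$ but moved to the ``interior side'' could in principle need a list of size $5$ in $G_i$; but girth only increases in subgraphs ($\g_{G_i}(v) \geq \g_G(v)$), and such a vertex, being on the outer face boundary of $G_i$ (it's near the cutvertex), falls under the weaker outer-boundary conditions, so no new constraint appears. The other care point is the degenerate $4$-cycle exception and the case $|V(G_i)| = |V(S \cap G_i)|$, both of which need a one-line direct check. Once these are dispatched, the gluing argument gives the contradiction and hence $G$ is $2$-connected; since a $2$-connected plane graph has a cycle as its outer face boundary walk, $C$ is a cycle.
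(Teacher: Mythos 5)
Your overall strategy matches the paper's exactly: find a cut vertex, split $G$ into two pieces, colour the piece containing $S$ (or most of it) first using minimality, propagate the colour through the cut vertex, colour the other piece, and glue. The gap is in the structural claim you make en route.

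You assert that $S$ lies entirely within one piece, with the justification ``if $x \in V(S)$, then since $S$ is a path and $x$ is a cutvertex, all of $S$ lies in one piece with $x$ as an endpoint.'' That is not a valid deduction, and it is false in general: if $x$ is an \emph{internal} vertex of the path $S$, the path can enter $x$ from $G_1$ and leave into $G_2$. A concrete example is two triangles $abx$ and $cdx$ meeting only at $x$, with $S = bxc$: then $b \in G_1 \setminus G_2$, $c \in G_2 \setminus G_1$, and $x$ is an internal vertex of $S$. (Your instinct that the cycle case needs checking is actually backwards --- a cycle through $x$ must stay on one side, since it has to return to $x$; it is the path case that straddles.) If $S$ does straddle, then precolouring only $\{x\}$ in $G_2$ does not yield a canvas: the vertices of $S \cap G_2$ other than $x$ still have singleton lists but are not in the precoloured subgraph, violating the canvas definition. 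So as written, the argument fails in this case.

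The paper avoids this by setting $S_i = S \cap G_i$ for $i \in \{1,2\}$, \emph{choosing the labelling so that} $|V(S_1)| \geq |V(S_2)|$, and noting that since $|V(S)| \leq 4$ and $S_1 \cap S_2 \subseteq \{u\}$, this forces $|V(S_2)| \leq 2$. It then colours $G_1$ (with precoloured $S_1$) first, and then $G_2$ with the precoloured path $S_2$ plus the singleton constraint at $u$; since the precoloured path there has at most two vertices, none of the exceptional canvas types (which all require $k \geq 3$) can occur, so minimality applies cleanly. That ``take the bigger side for $S_1$'' move is precisely the missing idea. (Incidentally, your worry about the degenerate $4$-cycle case is unfounded here: Theorem \ref{345colouring} quantifies over $L$-colourings of $G[V(S)]$, and restricting a given $L$-colouring of $G[V(S)]$ to $G_1[V(S_1)] \subseteq G[V(S)]$ automatically yields a valid $L$-colouring of $G_1[V(S_1)]$, so the inductive application is unproblematic.)
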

\begin{proof}
Suppose not. Since $|V(G)| \geq 3$, there exists a cut vertex $u \in V(G)$ such that the path $u$ separates $G$ into two graphs $G_1$ and $G_2$ (as in Definition \ref{seppathdef}). For $i \in \{1,2\}$, let $S_i$ be the subgraph of $S$ contained in $G_i$. Suppose without loss of generality that $|V(S_1)| \geq |V(S_2)|$. Since $S$ is a path or cycle in the outer face boundary of $G$, it follows that either $V(S_1) \cap V(S_2) = \emptyset$, or $V(S_1) \cap V(S_2) = \{u\}$. By Observation \ref{subcanvas}, $K[G_1]$ is an unexceptional canvas. By the minimality of $K$, $G_1$ thus admits an $L$-colouring $\phi_1$. Let $L'$ be a list assignment for $G_2$ obtained by setting $L'(v) = L(v)$ for all $v \in V(G_2) -u$ and setting $L'(u) = \{\phi(u)\}$. Note that since $S_2 \subset S$ and $S$ is an acceptable path or cycle, $S_2$ is an acceptable path. Since $|V(S_1)| \geq |V(S_2)|$, it follows that $|V(S_2)| \leq 2$ and so that $(G_2, L', S_2, A \cap V(G_2))$ is an unexceptional canvas. Thus $G_2$ admits an $L'$-colouring $\phi_2$. By construction, $\phi_1$ and $\phi_2$ agree on $u$, and so $\phi_1 \cup \phi_2$ is an $L$-colouring of $G$, a contradiction. 
\end{proof}

Since $G$ is 2-connected by Lemma \ref{2conn}, it follows that $C$ is a cycle. We now show this cycle is chordless.




\begin{lemma}\label{chordless}
$C$ is chordless.
\end{lemma}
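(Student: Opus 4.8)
The plan is the standard ``split along the chord'' argument, using $2$-connectivity (Lemma~\ref{2conn}), minimality of $K$, and Observation~\ref{subcanvas}. Suppose for contradiction that $C$ has a chord $e = xy$. Since $C$ is a cycle and $e$ is a chord, $e$ together with the two arcs of $C$ between $x$ and $y$ bounds a separation: the path $P = xy$ (a single edge) separates $G$ into two plane graphs $G_1$ and $G_2$ in the sense of Definition~\ref{seppathdef}, with $G_1 \cap G_2 = P$, each $G_i$ connected, each missing at least one vertex of the other, and $P$ on the outer boundary of both. (Here I would be slightly careful: a chord of the outer cycle might not by itself give a proper separation if, say, one side has no interior vertices and no other boundary vertices — but then $x$ and $y$ would be joined by both the edge $e$ and a single edge of $C$, contradicting that $G$ is simple; so both sides are genuinely nontrivial.)

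Next I would locate the precoloured path $S$. Since $S \subseteq C$ and $C$ is a cycle, $S$ is an arc (or all) of $C$, so $S$ lies, up to the two shared vertices $x,y$, inside one of the two sides — say $V(S) \subseteq V(G_1)$ after relabelling, with $S \cap G_2 \subseteq \{x,y\}$, hence $|V(S \cap G_2)| \le 2$. Now $K[G_1] = (G_1, L, S, A \cap V(G_1))$ is a subcanvas of $K$, hence a canvas, and by Observation~\ref{subcanvas} it is unexceptional; by minimality of $K$ (note $|V(G_1)| < |V(G)|$ since $G_2$ contains a vertex outside $G_1$), the colouring $\phi$ of $G[V(S)]$ extends to an $L$-colouring $\phi_1$ of $G_1$. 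Then define $L'$ on $G_2$ by $L'(v) = L(v)$ for $v \in V(G_2) \setminus \{x,y\}$ and $L'(x) = \{\phi_1(x)\}$, $L'(y) = \{\phi_1(y)\}$. Set $S' = G_2[\{x,y\}]$, which is a path of length one (the edge $xy$), hence acceptable, with $|V(S')| \le 2$, so $(G_2, L', S', A \cap V(G_2))$ is an unexceptional canvas. Again $|V(G_2)| < |V(G)|$, so by minimality $G_2$ admits an $L'$-colouring $\phi_2$, which agrees with $\phi_1$ on $\{x,y\}$. Then $\phi_1 \cup \phi_2$ is an $L$-colouring of $G$ extending $\phi$, a contradiction.

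Two technical points deserve care, and I expect the second to be the real obstacle. First, I must check $(G_2, L', S', A\cap V(G_2))$ really is a canvas: the only constraint involving $L'$ that could fail is on $x$ or $y$, but these now have $|L'| = 1$ and lie in $V(S')$, where $|L(v)| \ge 1$ suffices; every other vertex keeps its $L$-list, and girth can only increase in a subgraph, so all the $\g$-dependent lower bounds still hold. The independence/girth condition on $A \setminus V(S')$ is inherited. Second — and this is the crux — I must ensure that neither subcanvas is exceptional. Observation~\ref{subcanvas} handles $K[G_1]$ directly. For $G_2$ with its new precoloured edge $S'$: since $|V(S')| = 2$, none of the exceptional types (i), (ii), (iii) can apply, as all three require $k \in \{3,4\}$. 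So unexceptionality is automatic there. The one place to be vigilant is whether the separation is ``degenerate'' — e.g.\ $S$ spanning both sides of the chord with three or more vertices, forcing $|V(S\cap G_2)| > 2$; but since $S$ is a subpath of the cycle $C$ and the chord $xy$ has its endpoints on $C$, the arc of $C$ carrying $S$ lies on one side together with at most the two endpoints $x,y$, so $|V(S \cap G_2)| \le 2$ always. With these checks in place the argument closes.
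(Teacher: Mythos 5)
Your argument is correct for, and essentially matches, the paper's first case: $S$ lies entirely in one side $G_1$ of the chord, you colour $G_1$ first, transfer the colours of the chord's endpoints to $G_2$, and use that a precoloured path on two vertices can never make a canvas exceptional. However, the proof is incomplete because the final sentence rests on a false claim. You assert that $S$, as a subpath of $C$, must lie on one side of the chord together with at most the two chord endpoints, hence $|V(S\cap G_2)|\le 2$. This is not so: one endpoint of the chord can be an \emph{interior} vertex of $S$. For example, with $S=v_1v_2v_3$ and a chord $v_2v_j$ for some $j\ge 4$, the vertex $v_1$ lies on one side of the chord and $v_3$ on the other, so $S\not\subseteq G_1$ and $S\not\subseteq G_2$. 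In that situation you cannot invoke minimality on $(G_1,L,S,A\cap V(G_1))$ at all, since $G_1$ does not contain $S$; and the precoloured set in the other piece consists of the chord endpoints together with whatever part of $S$ is there, which can have three or four vertices, so the exceptional canvas types are very much back in play.

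That straddling case occupies the bulk of the paper's proof of Lemma \ref{chordless}. There the chord is $uw$ with $w$ an interior vertex of $S$ and $u\notin V(S)$, and $S$ is split at $w$ into $S_1\subseteq G_1$ and $S_2\subseteq G_2$. The argument then branches on $|L(u)|$: if $|L(u)|\ge 4$, the paper colours $G_1$ twice to produce two distinct candidate colours for $u$ and then colours $G_2$ against a tailored $3$-element list at $u$; if $|L(u)|\le 3$, it colours each $G_i$ with $S_i$ precoloured, forms the canvases $(G_i,L_i,S_i+uw,A\cap V(G_i))$ with $L_i(u)$ a singleton coming from the colouring of the other side, and shows that if both of these canvases are exceptional then $K$ itself must be exceptional, by carefully matching the exceptional types arising on the two sides. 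None of this is captured by the split-along-a-chord argument alone. (You also do not address the case where $S$ is itself a cycle, i.e.\ $S=C$; the paper disposes of it quickly via the definition of an acceptable cycle and the presence of the chord, but it must be ruled out.)
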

\begin{proof}
Suppose not, and let $uw$ be a chord of $C$.  Thus the path $uw$ separates $G$ into two graphs $G_1$ and $G_2$. Suppose $S$ is a cycle. Since $C$ has a chord, $|V(C)| \geq 4$. Thus $S$ is a 4-cycle. But then at least one vertex in $S$ has girth at least four by the definition of acceptable cycle. Thus $C$ is chordless \textemdash a contradiction. It follows that $S$ is a path.

Suppose now that $S \subseteq G_1$. Note that since $K$ is unexceptional, it follows from Observation \ref{subcanvas} that $K[G_1]$ is also unexceptional.   Since $K$ is a minimum counterexample to Theorem \ref{345colouring} and $|V(G_1)| < |V(G)|$, it follows further that $K[G_1]$ admits an $L$-colouring $\phi$. Let $L'$ be a list assignment for $G_2$ obtained from $L$ by setting $L'(v) = L(v)$ for all $v \in V(G_2)\setminus \{u,w\}$, and $L'(v) = \{\phi(v)\}$ for $v \in \{u,w\}$. Note that $S \cap V(G_2) \subseteq \{u,w\}$, and so that $K_2 = (G_2, L', uw, A \cap V(G_2))$ is a canvas. Moreover, $uw$ is an acceptable path for $G_2$ since it contains only two vertices. This also implies that $K_2$ is unexceptional. Since $|V(G_2) | < |V(G)|$, we have by the minimality of $K$ that $G_2$ admits an $L'$-colouring $\phi'$. This is a contradiction since $\phi \cup \phi'$ is an $L$-colouring of $G$. 

Thus $S \not \subseteq G_1$, and symmetrically $S \not \subseteq G_2$. Suppose there is no vertex of $S$ contained in $V(G_2) \setminus V(G_1)$. Then since $S \not \subseteq G_1$, it follows that there is an edge $e$ of $S$ in $E(G_2)\setminus E(G_1)$ but that both endpoints of $S$ are in $G_1$. Since $G_1 \cap G_2 = uv$, it follows that $e=uv$, a contradiction since $uv\in E(G_1)$. Thus there exists a vertex of $S$ contained in $V(G_1) \setminus V(G_2)$, and symmetrically, a vertex of $S$ contained in $V(G_2) \setminus V(G_1)$. It follows that $|V(S)| \geq 3$, and one of $u$ and $w$ is an internal vertex of the path $S$. We may assume without loss of generality that $w$ is an interior vertex of $S$. Note that $u$ is not contained in $V(S)$ since otherwise $|V(S)| = 4$ and $S$ contains a subpath of length 2 containing only vertices of girth 3, contradicting the definition of acceptable path. Let $S_1$ and $S_2$ be subpaths of $S$ such that $|V(S_1)| \leq |V(S_2)|$; $S_1 \cup S_2 = S$; and $S_1 \cap S_2 = w$. Note that both $S_1$ and $S_2$ are acceptable paths in $G$, since a subpath of an acceptable path is itself acceptable. We may assume without loss of generality that $S_1 \subseteq G_1$ and $S_2 \subseteq G_2$.

First suppose that $|L(u)| \geq 4$. Since $K[G_1]$ is a subcanvas of $K$ and $K$ is unexceptional, it follows from Observation \ref{subcanvas} that $K[G_1]$ is also unexceptional. By the minimality of $K$, $K[G_1]$ admits an $L$-colouring $\phi_1$. Let $L'$ be a list assignment for $G_1$ obtained by setting $L'(v) = L(v)$ for all $v \in V(G_1)-u$, and setting $L'(u) = L(u) \setminus \{\phi_1(u)\}$. Since $|L(u)| \geq 4$, it follows that $|L'(u)|\geq 3$ and so that $(G_1, L', S_1, A\cap V(G_1))$ is a canvas. Note that since $|V(S_1)| \leq |V(S_2)|$ and $|V(S)| \leq 4$, we have that $|V(S_1)| \leq 2$.  Thus $(G_1, L', S_1, A\cap V(G_1))$ is unexceptional. By the minimality of $K$, there exists an $L'$-colouring $\phi_2$ of $(G_1, L', S_1, A\cap V(G_1))$. Let $L''$ be a list assignment for $G_2$ obtained by setting $L''(v) = L(v)$ for all $v \in V(G_2)-u$, and $L''(u) = \{\phi_1(u), \phi_2(u), \phi_1(w)\}$. Note that since $S$ is an acceptable path and $w$ is an interior vertex of $S$, either $|V(S_2)| = 2$, or $|V(S_2)| = 3$ and $S_2$ contains a vertex of girth at least four. In either case, $(G_2, L'', S_2, A \cap V(G_2))$ is an unexceptional canvas and, by the minimality of $K$, admits an $L''$-colouring $\phi$.  Note that $\phi(u) \neq \phi_1(w)$, since $\phi$ is an $L''$-colouring. If $\phi(u) = \phi_1(u)$, then $\phi \cup \phi_1$ forms an $L$-colouring of $G$. Otherwise, $\phi(u) = \phi_2(u)$ and so $\phi \cup \phi_2$ forms an $L$-colouring of $G$. In either case, this is a contradiction.

We may therefore assume that $|L(u)| \leq 3$. For each $i \in \{1,2\}$, since $K[G_i]$ is a subcanvas of an unexceptional canvas, by Observation \ref{subcanvas} $K[G_i]$ admits an $L$-colouring $\varphi_i$. Let $L_{i}$ be the list assignment for $G_{i}$ obtained by setting $L_i(v) = L(v)$ for all $v \in V(G_i) -u$, and $L_i(u) = \{\varphi_{3-i}(u)\}$. For each $i \in \{1,2\}$, let $S_i'$ be the path obtained from $S_i$ by adding the edge $uw$, and let $K_i = (G_i, L_i, S_i', A \cap V(G_i))$. Note that both $K_1$ and $K_2$ are canvases. Suppose there exists some $i \in \{1,2\}$ such that $K_{i}$ is unexceptional. Then $G_i$ admits an $L_i$-colouring $\psi_i$, and $\psi_i \cup \varphi_{3-i}$ together form an $L$-colouring of $G$ \textemdash a contradiction. Note that since $|V(S_1)| \leq |V(S_2)|$, it follows that $|V(S_1')| \leq 3$ and so that $K_1$ is not an exceptional canvas of type (i) or (ii). 

We may therefore assume that $K_1$ is an exceptional canvas of type (iii). If $K_2$ is also an exceptional canvas of type (iii), then since $|L(u)| \leq 3$, we have that $K$ is an exceptional canvas of type (iii) \textemdash a contradiction. If $K_2$ is an exceptional canvas of type (i), then since $K_1$ is an exceptional canvas of type (iii) we have that $K$ is an exceptional canvas of type (ii) \textemdash a contradiction. Thus we may assume that $K_2$ is an exceptional canvas of type (ii) and thus contains a generalized wheel subgraph $W$ with principal path $u_1u_2u_3$, where $u_2$ and one vertex in $\{u_1,u_3\}$ are contained in $S_2'$, and the remaining vertex in $\{u_1,u_3\}$ forms a chord with $u_2$ in the outer cycle of $G_2$. Note that since $K_2$ is an exceptional canvas of type (ii), we have that $|V(S_2)| = 3$. Recall that every vertex in a generalized wheel has girth three. Suppose $uw$ is not a subpath of $u_1u_2u_3$. Then every vertex in $V(S_2) \setminus \{w\}$ has girth three. Since $K_1$ is an exceptional canvas of type (iii), every vertex in $V(S_1)$ has girth three. But then $S$ is not an acceptable path, as it contains four vertices of girth three. Thus we may assume that $uw$  is a subpath of $u_1u_2u_3$. But then $K$ is an exceptional canvas of type (ii), a contradiction.
\end{proof}



\subsection{Separating Cycle Lemmas}\label{separatingcyclelemmas}
In Lemmas \ref{sep3cycle}-\ref{sep6cycle}, we describe a set of reducible configurations for $K$: together, Lemmas \ref{sep3cycle} and \ref{sep4cycle} show that $G$ does not contain a cycle of length at most four with vertices in its interior. Lemmas \ref{sep5cycle} and \ref{sep6cycle} show that every cycle of length at most six with vertices in its interior is composed of vertices of relatively low girth. 

\begin{lemma}\label{sep3cycle}
If $T$ is a triangle in $G$, then $\Int(T) = \emptyset$. 
\end{lemma}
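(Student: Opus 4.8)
The plan is to argue by contradiction using the minimality of $K$. Suppose $T = t_1t_2t_3$ is a triangle in $G$ with $\Int(T) \neq \emptyset$. Since $T$ is a cycle with a vertex strictly inside it, $T$ is not the outer cycle $C$; in particular $T$ separates $G$ into $\Int[T]$ (the part inside and on $T$) and the part of $G$ outside and on $T$. First I would dispose of the interaction with $S$ and $A$: since $G$ is $2$-connected by Lemma \ref{2conn} and $C$ is chordless by Lemma \ref{chordless}, at most two of the $t_i$ lie on $C$, and I expect to be able to arrange (or argue directly) that $V(S)$ and the relevant part of $A$ lie in the outer piece, so that the precoloured/reduced structure is untouched by what happens inside $T$.

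The key step is to colour the outer piece first and then extend inward. Let $G_{\mathrm{out}}$ be the subgraph of $G$ consisting of $T$ together with everything outside it. Since $\Int(T)\neq\emptyset$, we have $|V(G_{\mathrm{out}})| < |V(G)|$, and by Observation \ref{subcanvas} the canvas $K[G_{\mathrm{out}}]$ is unexceptional, so by minimality of $K$ it admits an $L$-colouring $\phi$. Now restrict attention to $\Int[T]$: every vertex of $T$ is coloured by $\phi$, and every vertex of $T$ lies on a triangle, hence has girth $3$; I want to apply Theorem \ref{345colouring} (i.e. minimality again, since $|V(\Int[T])| < |V(G)|$) to the canvas on $\Int[T]$ with $S' = T$ precoloured by $\phi$. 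Here $T$ is an acceptable cycle (any $3$-cycle is, since deleting an edge leaves a path on $3$ vertices, which is acceptable), and with $k=3$ the only obstruction in Theorem \ref{345colouring} is exception (iii): $T$ would have to be the principal path... but $T$ is a triangle, not a path of length $3$, so exception (iii) cannot apply to a cycle $S'$. (If I instead phrase $S'$ as the path $t_1t_2t_3$ obtained by deleting one edge of $T$, then I must check $\Int[T]$ is not an exceptional canvas of type (iii), i.e. that $t_1t_2t_3$ is not the principal path of a generalized wheel filling $\Int[T]$ whose non-principal outer vertices all have list size exactly $3$ — but every vertex in a generalized wheel has girth $3$, forcing $|L(v)|\geq 5$ for interior vertices, whereas type (iii) only demands $\geq 3$ on the outer cycle, and crucially the outer cycle of such a wheel would be $T$ itself, which has no vertices other than $t_1,t_2,t_3$, so the wheel is just the triangle and there is nothing inside it, contradicting $\Int(T)\neq\emptyset$.) Either way, $\Int[T]$ admits an $L$-colouring agreeing with $\phi$ on $T$, and the union with $\phi$ is an $L$-colouring of $G$, contradicting the choice of $K$.

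I also need to handle the boundary bookkeeping carefully: when I split off $\Int[T]$, its outer face boundary is $T$, and a vertex $v$ of $T$ that was not on $C$ in $G$ now lies on the outer boundary of $\Int[T]$ — but since such $v$ has $\g(v)=3$ it needs $|L(v)|\geq 5$ as a non-$C$ vertex of the original canvas only if it were interior in $G$; here it is genuinely in $\Int(C)$ of $G$, so indeed $|L(v)|\geq 5 > 1$, which is more than enough to be precoloured. Symmetrically, when I colour $G_{\mathrm{out}}$ first, the vertices of $T$ that are interior to $C$ in $G$ now appear on the outer boundary of $G_{\mathrm{out}}$; Observation \ref{subcanvas} already tells us this keeps the subcanvas unexceptional, so minimality applies. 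The main obstacle I anticipate is precisely this exceptional-canvas check for the inner piece — making sure that precolouring all of $T$ (a triangle) never triggers exceptions (i), (ii), or (iii) of Theorem \ref{345colouring} — but as sketched above, exceptions (i) and (ii) require $k=4$ and exception (iii) requires an outer cycle strictly larger than $S$, both of which are incompatible with $S' = T$ being the triangle itself with a nonempty interior.
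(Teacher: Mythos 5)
Your first step (colour $G_{\mathrm{out}} = G - \Int(T)$ via Observation \ref{subcanvas} and minimality) matches the paper, but the inward extension has two genuine gaps. First, the parenthetical path-version of your exceptional-canvas check is wrong: by the paper's definition a broken wheel includes the bare $3$-cycle, so the triangle $T$ itself is a generalized wheel with principal path $t_1t_2t_3$ whose outer cycle lies on the outer face boundary of $\Int[T]$, and the list condition in exception (iii) holds vacuously because that outer cycle has no vertices besides $t_1,t_2,t_3$. Exception (iii) does not require the wheel $W$ to equal the graph or to fill $\Int[T]$ --- it only asks that such a subgraph exist --- so the canvas $(\Int[T],L',t_1t_2t_3,\cdot)$ is exceptional of type (iii) no matter what sits inside $T$, and minimality then gives you nothing; your argument that ``the wheel is just the triangle and there is nothing inside it, contradicting $\Int(T)\neq\emptyset$'' conflates $W$ with $\Int[T]$. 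The cycle-version ($S'=T$ as an acceptable cycle) does evade this, since none of (i)--(iii) as literally stated can hold when $S$ is a $3$-cycle, so that is the variant you must rely on.

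Second, even the cycle-version needs $|V(\Int[T])|<|V(G)|$, and this is not justified at this point of the paper: nothing proved so far rules out that the outer cycle $C$ of $G$ is the triangle $T$ itself (with, say, $|V(S)|=2$), in which case $\Int[T]=G$, there is no smaller canvas to which the inductive hypothesis applies, and your claim that ``at most two of the $t_i$ lie on $C$'' also fails (chordlessness of $C$ does not prevent all three from lying on $C$; it merely forces $C=T$ in that case). The paper's proof is engineered to avoid both problems at once: instead of precolouring all of $T$, it colours $G-\Int(T)$, deletes $u_3$, removes $\phi(u_3)$ from the lists of its neighbours in the interior of $T$, and precolours only the edge $u_1u_2$; the resulting canvas is strictly smaller even when $C=T$, a two-vertex precoloured path can never trigger an exception, and the vertices whose lists drop to size two all have girth at least five and are all adjacent to $u_3$, hence form a legitimate independent set $A'$. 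To repair your proposal you would need either to handle the case $C=T$ separately (essentially by the paper's deletion trick) or to show it cannot occur, neither of which your sketch does.
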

\begin{proof} 
Suppose not, and let $T = u_1u_2u_3u_1$ be a counterexample.  By the minimality of $K$, we have that $G-\Int(T)$ admits an $L$-colouring $\phi$. Let $G'$ be the graph obtained from $\Int[T]$ by deleting $u_3$. Let $C'$ be the outer cycle of $G'$. Let $L'$ be the list assignment obtained from $L$ by removing the colour $\phi(u_3)$ from the lists of all neighbours of $u_3$ in $\Int(C)$. Finally, let $L'(u_i) = \{\phi(u_i)\}$ for $i \in \{1,2\}$. Note that every vertex $v \in V(C') \setminus \{u_1, u_2\}$ with $\g(v) \in  \{3, 4\}$ has $|L'(v)| \geq 3$, since $|L(v)| \geq 4$. Let $A'$ be the set of vertices $v \in V(C')$ with $|L'(v)| = 2$. By the above, every vertex in $A'$ has girth at least five. Moreover, $A'$ is an independent set, since every vertex in $A'$ is adjacent to $u_3$ in $G$.  Thus since $u_1u_2$ is an acceptable path for $G'$ and contains only two vertices, it follows that $(G', L', u_1u_2, A')$ is an unexceptional canvas. Since $|V(G')| < |V(G)|$, by the minimality of $K$ we have that $G'$ has an $L'$-colouring $\phi'$. But $\phi \cup \phi'$ is an $L$-colouring of $G$, a contradiction.
\end{proof}

Our colouring arguments will be more straightforward if we assume that for every vertex $v \in V(C) \setminus (A \cup V(S))$, we have $|L(v)| = 3$. Unfortunately, deleting extra colours from lists in $V(C) \setminus (A \cup V(S))$ might result in the creation of an exceptional canvas of type (ii) or (iii). We show below this does not occur.

\begin{obs}\label{listsizes}
Every vertex in $S$ has a list of size one. Every vertex in $A$ has a list of size two. Every vertex in $V(C) \setminus (V(S)\cup A)$ has a list of size three. 
\end{obs}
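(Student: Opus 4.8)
The first two assertions require no work: every vertex of $S$ has a list of size exactly one by the minimality of $\sum_{v\in V(G)}|L(v)|$, and every vertex of $A$ has a list of size exactly two by the definition of a canvas (recall $V(S)\cap A=\emptyset$). So the plan is to establish the third assertion, and since every $v\in V(C)\setminus(V(S)\cup A)$ already has $|L(v)|\ge 3$ by the definition of a canvas, it suffices to rule out $|L(v)|\ge 4$.

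Suppose such a $v$ exists, and let $L'$ be obtained from $L$ by deleting an arbitrary colour from $L(v)$. Because $v\in V(C)$, the girth-four and girth-five clauses in the definition of a canvas do not constrain $|L(v)|$, so $K'=(G,L',S,A)$ is again a canvas, with the same vertex set as $K$ but strictly smaller $\sum|L'|$. If I can show $K'$ is unexceptional, I am done: the unique $L$-colouring $\phi_0$ of $G[V(S)]$ witnessing that $K$ is a counterexample is also an $L'$-colouring of $G[V(S)]$ not extending to $G$ (any $L'$-colouring of $G$ is an $L$-colouring of $G$), so $K'$ is a counterexample with the same vertex count but smaller list-size sum, contradicting the choice of $K$.

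The crux is therefore to show that $K'$ is unexceptional, and this is the step I expect to be the main obstacle. Since $L$ and $L'$ agree off $v$ and $v\notin A\cup V(S)$, the canvas $K'$ is of type (i) exactly when $K$ is, hence not of type (i). Suppose $K'$ is of type (ii) or (iii), with associated generalized wheel $W$ and outer cycle $C_W$. Every requirement in the definitions of these types other than ``each non-principal vertex of $C_W$ has a list of size exactly three'' is independent of the list assignment, so it holds for $K$ as well; since $K$ is unexceptional, the list-dependent requirement must fail for $L$, and as $L'$ differs from $L$ only at $v$ this forces $v\in V(C_W)$, $|L(v)|=4$, and all other non-principal vertices of $C_W$ to have lists of size three. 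Now $V(C_W)\subseteq V(C)$, and since $C$ is chordless by Lemma~\ref{chordless} every edge of $C_W$ lies on $C$; hence $C_W=C$. Since $|V(G)|>|V(S)|$ and every triangle of $G$ has empty interior by Lemma~\ref{sep3cycle}, $|V(C)|\ge 4$, so $W$ is a wheel; applying Lemma~\ref{sep3cycle} to the triangles cut off by its spokes then yields $G=W$, with hub $h$ satisfying $|L(h)|\ge 5$ (as $h\notin V(C)$ and $\g(h)=3$).

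With this in hand, the type-(ii) case is immediately contradictory: its vertex $w\notin V(S)$ would be the rim-neighbour of $v_2$ other than $v_1$, forcing $w=v_3\in V(S)$. For the type-(iii) case, where $S=v_1v_2v_3$ is the principal path of the wheel $G=W$, I would finish by extending $\phi_0$ directly. Writing $C=v_1v_2\cdots v_qv_1$ with $q\ge 4$ and $v=v_\ell$, choose $\gamma\in L(h)\setminus\{\phi_0(v_1),\phi_0(v_2),\phi_0(v_3)\}$ and colour $h$ with $\gamma$; then colour $v_4,v_5,\dots,v_{\ell-1}$ in order and $v_q,v_{q-1},\dots,v_{\ell+1}$ in order, where at each step the vertex has a list of size at least three and at most two already-coloured neighbours (one rim neighbour and $h$), so a colour is available; finally colour $v_\ell$, which has a list of size four and at most three already-coloured neighbours. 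This extends $\phi_0$ to an $L$-colouring of $G$, contradicting that $K$ is a counterexample. Hence $K'$ is unexceptional, and we are done.
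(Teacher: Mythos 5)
Your proof is correct and follows essentially the same route as the paper: delete a colour from $L(v)$, use the minimality of $\sum_{u}|L(u)|$ together with the chordlessness of $C$ (Lemma \ref{chordless}) and Lemma \ref{sep3cycle} to force any resulting exceptional structure to be a wheel equal to $G$, and then derive a colouring contradiction. The only immaterial difference is the finish in the type-(iii) case: you greedily colour the whole wheel directly, whereas the paper applies the minimality of $K$ to $G-v$ and then extends the colouring to $v$ using $\deg(v)=3$ and $|L(v)|\ge 4$.
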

\begin{proof}
Suppose not, and let $v \in V(G)$ be a counterexample. Since $K$ was chosen to minimize $\sum_{u \in V(G)} |L(u)|$, it follows that $|L(u)| = 1$ for every $u \in V(S)$. Thus $v \not \in V(S)$, as $v$ is a counterexample to the observation. Moreover, by definition of canvas every vertex in $A$ has exactly two colours in its list: thus $v \not \in A$. Since $v \in V(C) \setminus (V(S) \cup A)$ and $v$ is a counterexample to the observation, it follows that $|L(v)| \geq 4$. Since $K$ was chosen to minimize $\sum_{v \in V(G)} |L(v)|$, it follows that the canvas $K'$ obtained from $K$ by deleting a colour from the list of $v$ is an exceptional canvas. Since $C$ is chordless, $K'$ is not an exceptional canvas of type (ii). We claim moreover that $K'$ is not an exceptional canvas of type (i): this too follows from the fact that $C$ is chordless, and so if $K'$ is an exceptional canvas of type (i), then $C$ contains only the path $S$ together with a single vertex in $A$ adjacent to both endpoints of $S$, contradicting that $v \not \in V(S) \cup A$. Thus $K'$ is an exceptional canvas of type (iii), and so contains a subgraph $W$ that is a generalized wheel with principal path $S$ such that the vertices in the outer cycle of $W$ are in the outer face boundary of $G$ and have lists of size at most 3. Since $C$ is chordless by Lemma \ref{chordless}, $W$ is a wheel and moreover the outer cycle of $W$ is the outer cycle of $G$. Since every triangle in $G$ has no vertices in its interior by Lemma \ref{sep3cycle}, it follows that $W = G$.  Since $v \in V(C)$ and $W$ is a wheel, we have further that $\deg(v) = 3$. By the minimality of $K$, $G-v$ admits an $L$-colouring $\phi$. But then since $|L(v)| \geq 4$ and $\deg(v) = 3$, it follows that $\phi$ extends to an $L$-colouring of $G$ by choosing $\phi(v) \in L(v) \setminus \{\phi(u): u \in N(v)\}$, a contradiction.
\end{proof}
This observation allows us more easily to argue explicitly about the list assignment $L$ in Subsection \ref{towardsadeletablepath} and Section 3.

In a similar vein as Lemma \ref{sep3cycle}, we have the following. 
\begin{lemma}\label{sep4cycle}
If $T$ is a 4-cycle in $G$, then $\Int(T) = \emptyset$. 
\end{lemma}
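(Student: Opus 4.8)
The plan is to mimic the proof of Lemma~\ref{sep3cycle}, but with an extra level of care because a $4$-cycle, unlike a triangle, can be \emph{acceptable} as a separating path only under girth hypotheses, and because reducing into the interior of a $4$-cycle can lower list sizes to two on \emph{two} consecutive vertices rather than just one. Suppose for contradiction that $T = u_1u_2u_3u_4u_1$ is a $4$-cycle with $\Int(T)\neq\emptyset$, chosen (say) with $\Int(T)$ minimal. First I would colour the ``outside'': by Observation~\ref{subcanvas}, $K[G - \Int(T)]$ is an unexceptional subcanvas, and since $|V(G-\Int(T))|<|V(G)|$ the minimality of $K$ gives an $L$-colouring $\phi$ of $G-\Int(T)$. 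Note $G-\Int(T)$ contains all of $T$ (the vertices of $T$ lie on the cycle, not in its interior), so $\phi$ in particular colours $u_1,u_2,u_3,u_4$.

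Now I need to extend $\phi$ to $\Int[T]$. Form $G' := \Int[T] - u_3 - u_4$ wait — more carefully: the analogue of the trick in Lemma~\ref{sep3cycle} is to delete a vertex of $T$ and absorb its colour into the lists of its interior neighbours, leaving a short precoloured path. Here I would delete $u_4$, obtaining $G' := \Int[T] - u_4$, whose outer cycle $C'$ contains the path $u_1u_2u_3$. Set $L'(u_i) = \{\phi(u_i)\}$ for $i\in\{1,2,3\}$, and for every vertex $v$ in the interior of $T$ adjacent to $u_4$, set $L'(v) := L(v)\setminus\{\phi(u_4)\}$, leaving all other lists unchanged. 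The key point (as in Lemma~\ref{sep3cycle}) is that for a vertex $v\in V(C')\setminus\{u_1,u_2,u_3\}$ with $\g_G(v)\in\{3,4\}$ we had $|L(v)|\ge 4$, so $|L'(v)|\ge 3$; hence the set $A'$ of vertices of $C'$ with $|L'(v)|=2$ consists of vertices of girth at least five, and every vertex of $A'$ is adjacent to $u_4$ in $G$, so $A'$ is independent (two such vertices plus $u_4$ plus a common neighbour would give a short cycle, but more simply: they would be adjacent, creating a triangle through $u_4$, which by Lemma~\ref{sep3cycle} has empty interior, contradicting $u_4$ being interior-adjacent — or one argues directly as Thomassen does). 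I also need that $u_1u_2u_3$ is an \emph{acceptable} path for $G'$: it has three vertices, so it is acceptable unconditionally. Thus $K' := (G', L', u_1u_2u_3, A')$ is a canvas.

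The main obstacle is ruling out that $K'$ is an \emph{exceptional} canvas — if it were, I could not invoke minimality. Since $|V(S')| = 3$ with $S' = u_1u_2u_3$, it cannot be of type (i) or (ii) (those need $k=4$). So the danger is type (iii): $G'$ contains a generalized wheel $W$ with principal path $u_1u_2u_3$ whose outer cycle lies on the outer face of $G'$ with all non-principal outer vertices having $|L'(v)|=3$. But then every vertex of $W$ has girth $3$ in $G'$, hence the non-principal outer vertices $v$ of $W$ — which lie in the interior of $T$ in $G$ — have $\g_{G'}(v)=3$, so a short cycle in $G'$ through them; combined with the edges of $T$ this forces small cycles through $u_4$ in $G$, and I would use Lemma~\ref{sep3cycle} (no triangle has a nonempty interior) together with the fact that $C$ (the outer cycle of $G$) is chordless by Lemma~\ref{chordless} to derive a contradiction, essentially: the outer cycle of $W$ together with $u_4$ would have to be $G$ restricted to $\Int[T]$, making $\Int[T]$ a wheel-like graph, and then a counting/degree argument (each such interior vertex has degree $3$, so can be coloured greedily from its list of size $3$ after $\phi$ is extended) contradicts the assumed non-extendability. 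Alternatively — and this may be cleaner — I would handle the potential type-(iii) exception by appealing to Lemma~\ref{thomassenbadcol} (Thomassen's bad-colouring lemma): it says at most one colouring of $u_1u_2u_3$ fails to extend through the generalized wheel $W$, and since the colouring $\phi$ restricted to $u_1,u_2,u_3$ is a specific colouring, I can re-choose the colour $\phi(u_4)$ (it has many options, as $|L(u_4)|$ was large or $u_4$ has few already-coloured neighbours) to avoid that one bad colouring and still extend; but the more robust route is the structural contradiction with Lemmas~\ref{sep3cycle} and~\ref{chordless}. Once $K'$ is shown to be unexceptional, minimality of $K$ gives an $L'$-colouring $\phi'$ of $G'$; since $\phi$ and $\phi'$ agree on $u_1,u_2,u_3$ and $\phi'$ respects the deletions of $\phi(u_4)$ from interior neighbours of $u_4$, the union $\phi\cup\phi'$ is an $L$-colouring of $G$, the desired contradiction.
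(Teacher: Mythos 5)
Your reduction is in the right spirit, but it diverges from the paper at exactly the point you flag as ``the main obstacle,'' and that is where the proposal has a genuine gap. The paper avoids the exceptional-case analysis altogether: it deletes \emph{both} $u_3$ and $u_4$ from $\Int[T]$, removes $\phi(u_3),\phi(u_4)$ from the lists of their interior neighbours, and keeps only the two-vertex precoloured path $u_1u_2$, so the resulting canvas is automatically unexceptional (types (i)--(iii) all require a precoloured path on at least three vertices). The price is a slightly more careful list count (girth-$3$ interior vertices lose at most two colours from a list of size at least five; girth-$4$ interior vertices are adjacent to at most one of $u_3,u_4$ since those two are adjacent), but no structural case analysis is needed. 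By deleting only $u_4$ you keep the path $u_1u_2u_3$ and must genuinely rule out a type-(iii) exception, and your argument for that is not a proof: the claim that the wheel ``together with $u_4$ would have to be $G$ restricted to $\Int[T]$'' is unsubstantiated, and the assertion that each interior vertex ``has degree $3$, so can be coloured greedily from its list of size $3$'' is wrong on its face (interior vertices of girth three carry lists of size at least five, and nothing bounds their degree by three). The fallback via Lemma~\ref{thomassenbadcol} also fails: $\phi(u_4)$ is not free to be re-chosen, since $u_4$ lies on the $4$-cycle and may sit on the outer cycle of $G$, even on $S$, with a list of size as small as one and with many neighbours already coloured by $\phi$ in $G-\Int(T)$; and in any case the unique non-extendable colouring in Lemma~\ref{thomassenbadcol} is a colouring of the principal path $u_1u_2u_3$, which changing $u_4$'s colour does not affect.

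For what it is worth, your one-vertex-deletion route can be repaired cheaply, though not by the arguments you sketch: any non-principal vertex on the outer cycle of a candidate generalized wheel $W$ with principal path $u_1u_2u_3$ lies in $\Int(T)\subseteq V(\Int(C))$ and has girth three, hence $|L(v)|\ge 5$; since your $L'$ removes only the single colour $\phi(u_4)$, such a vertex has $|L'(v)|\ge 4$, so the type-(iii) list condition fails for it. The only surviving possibility is a wheel whose outer cycle is exactly the triangle $u_1u_2u_3$, which forces the chord $u_1u_3\in E(G)$; but then $u_1u_3$ splits matters into the triangles $u_1u_2u_3$ and $u_1u_3u_4$, and Lemma~\ref{sep3cycle} forces $\Int(T)=\emptyset$, a contradiction. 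Either supply this argument explicitly or, more simply, follow the paper and delete $u_3$ and $u_4$ so that the precoloured path has length one and no exceptional canvas can arise.
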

\begin{proof}
Suppose not, and let $T= u_1u_2u_3u_4u_1$ be a counterexample. By the minimality of $K$, we have that $G-\Int(T)$ admits an $L$-colouring $\phi$. Let $G'$ be the graph obtained from $\Int[T]$ by deleting $u_3$ and $u_4$, and let $C'$ be the boundary walk of the outer face of $G'$.   Let $L'$ be the list assignment obtained from $L$ by removing the colour $\phi(u_i)$ from the lists of all neighbours of $u_i$ in $\Int(T)$, for $i\in \{3,4\}$. Finally, let $L'(u_j) = \{\phi(u_j)\}$ for $j\in \{1,2\}$. Note that every vertex $v \in V(C')\setminus \{u_1, u_2\}$ with $\g(v) = 3$ has $|L'(v)| \geq 3$, since $|L(v)| \geq 5$.  Moreover, every vertex $v$ in $V(C') \setminus \{u_1, u_2\}$ with $\g(v) = 4$ has $|L'(v)| \geq 3$, since $|L(v)| \geq 4$ and every such vertex $v$ is adjacent to at most one of $u_3$ and $u_4$ in $G$. Finally, let $A'$ be the set of vertices $v \in V(C')\setminus \{u_1, u_2\}$ with lists of size at most two. By the above, $\g(v) \geq 5$ for every $v \in A'$. It follows that very vertex in $A'$ is adjacent to exactly one of $u_3$ and $u_4$ in $G$, and moreover that $A'$ forms an independent set. Thus $|L'(v)| = 2$ for all $v \in A'$, since $|L(v)| \geq 3$ for all $v \in A'$. It follows that  $K' = (G', L', u_1u_2, A')$ is a canvas. Since $u_1u_2$ contains only two vertices, $u_1u_2$ is an acceptable path and moreover $K'$ is unexceptional. By the minimality of $K$ we have that $G'$ has an $L'$-colouring $\phi'$. By construction, $\phi \cup \phi'$ is an $L$-colouring of $G$, a contradiction.
\end{proof}

Note that by Theorems \ref{sep3cycle} and \ref{sep4cycle}, we have immediately that $S$ is not a cycle, and that $V(C) \setminus V(S) \neq \emptyset$. Thus $S$ is a path; for the remainder of the paper, let $S = v_1 v_2 \dots v_k$, and let $C = v_1 \dots v_k v_{k+1} \dots v_qv_1$. As noted in Subsection 2.2, we may assume without loss of generality that $k \geq 2$. Below, we show that in fact $k \geq 3$.  
\begin{obs}\label{kgeq3}
We may assume that $S$ contains at least three vertices.
\end{obs}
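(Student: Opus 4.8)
The plan is to suppose for contradiction that $k = 2$, i.e.\ that $S = v_1v_2$ is a single edge (we already know $k \geq 2$ and that $S$ is a path). As in the groundwork lemmas, since $K$ minimizes $\sum |L(v)|$, we may assume $|L(v_1)| = |L(v_2)| = 1$, and there is a colouring $\phi$ of $G[V(S)]$ (i.e.\ a choice of the single colours in $L(v_1), L(v_2)$) that does not extend to an $L$-colouring of $G$. The idea is to grow the precoloured set along the outer cycle $C = v_1v_2v_3\cdots v_qv_1$ by one vertex and reduce to a smaller canvas. Concretely, I would attempt to colour $v_3$ (the neighbour of $v_2$ on $C$ other than $v_1$) with a colour not used on its already-coloured neighbours, thereby enlarging $S$ to the path $v_1v_2v_3$, which has length two and is automatically acceptable; then apply minimality of $K$ to the resulting canvas.

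The key steps, in order: (1) Note $C$ is a chordless cycle (Lemmas \ref{2conn}, \ref{chordless}), so $v_1, v_3$ are non-adjacent and $v_2$ has no other neighbour on $C$; in particular the only constraint on a colour for $v_3$ coming from already-precoloured vertices is $\phi(v_2)$. (2) Since $|L(v_3)| \geq 3$ (by Observation \ref{listsizes}, $v_3 \in V(C)$ has a list of size $1$, $2$, or $3$; if $v_3 \in V(S)$ we are done, and if $v_3 \in A$ its list has size $2$), there is at least one colour $c \in L(v_3)\setminus\{\phi(v_2)\}$. Set $\phi(v_3) = c$, let $S' = v_1v_2v_3$, let $L'$ agree with $L$ except $L'(v_3) = \{c\}$, and let $A' = A \setminus \{v_3\}$. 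Then $K' = (G, L', S', A')$ is a canvas with $|V(S')| = 3 > |V(S)|$ but $\sum_{v}|L'(v)| < \sum_v |L(v)|$, so by the minimality of $K$ (in the secondary parameter, with $|V(G)|$ unchanged), $K'$ admits an $L'$-colouring provided $K'$ is unexceptional — and any such colouring extends $\phi$ to $G$, a contradiction. (3) So it remains to rule out the possibility that, for \emph{every} admissible choice of $c$, the canvas $K'$ is exceptional. Here I would use the structure: $K'$ exceptional of type (i) or (ii) requires $|V(S')| = 4$, which fails; so $K'$ must be exceptional of type (iii), meaning $v_1v_2v_3$ is the principal path of a generalized wheel $W$ whose outer cycle lies on the outer face boundary of $G$ with the non-principal outer vertices having lists of size exactly $3$. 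Since $C$ is chordless, $W$ must be a wheel (or a triangle) with outer cycle exactly $C$; and since triangles and $4$-cycles have empty interior (Lemmas \ref{sep3cycle}, \ref{sep4cycle}), in fact $W = G$. Now $G$ is a wheel with hub $v$ and rim $C$, and $v_2$ has degree $3$ in $G$ (its neighbours are $v_1$, $v_3$, $v$). By minimality, $G - v_2$ has an $L$-colouring, and since $|L(v_2)| \geq \deg(v_2) = 3$ we can extend it — but wait, $|L(v_2)| = 1$, so instead I would argue directly that a hub-rim wheel with a length-one precoloured path and the given list sizes always extends, or invoke Lemma \ref{thomassenbadcol}/the fact that such a $W$ is handled: a $2$-precoloured wheel reduces by colouring the hub. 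More carefully: colour the hub $v$ with any colour in $L(v)\setminus(\{\phi(v_1)\}\cup\cdots)$ — $|L(v)| \geq 3$ and at worst two of $v$'s rim-neighbours are precoloured — then the rim becomes a path with at most two consecutive precoloured vertices and each uncoloured rim vertex has a list of size $\geq 3$ minus at most $2$ used neighbours $= \geq 1$, and we finish greedily around the cycle using that some rim vertex is uncoloured. This shows $K'$ cannot be forced to be exceptional, completing the contradiction.

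The main obstacle I anticipate is Step (3): ruling out that \emph{every} choice of colour $c$ for $v_3$ yields an exceptional (type (iii)) canvas. The delicate point is that a type-(iii) exception depends on the list $L'$, not merely on the graph, and a priori different choices of $c$ could produce genuinely different bad configurations — but since $W = G$ is forced to be a single wheel by chordlessness and the small-separating-cycle lemmas, the graph is the same for every $c$, and one only has to check that this fixed wheel-with-a-single-precoloured-edge always extends regardless of the single colour on $v_3$. That reduces to an easy direct argument about wheels (colour the hub, then greedily colour the rim), so the obstacle is more bookkeeping than substance. A secondary subtlety is the case $v_3 \in A$: then $v_3$ has girth $\geq 5$ and hence cannot be a vertex of a generalized wheel, so type (iii) is already impossible and the reduction goes through immediately; I would dispatch this case first.
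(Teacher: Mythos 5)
Your proposal takes essentially the same route as the paper — precolour $v_3$ with some $c \in L(v_3)\setminus L(v_2)$, use the secondary minimality in $\sum_v|L(v)|$ to force $K'=(G,L',v_1v_2v_3,A')$ to be exceptional, conclude it is of type (iii) since $|V(S')|=3$, and then use chordlessness of $C$ together with Lemmas~\ref{sep3cycle} and~\ref{sep4cycle} to conclude $G$ is a wheel — but there are two genuine gaps. First, the small-cycle case: when $q=3$ the edge $v_3v_1$ is a \emph{cycle edge}, not a chord, so your claim that ``$v_1,v_3$ are non-adjacent'' fails and $c$ must also avoid $\phi(v_1)$ for $\phi$ to even be a proper $L'$-colouring of $G[V(S')]$. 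The paper dispatches $|V(C)|\leq 4$ separately first: then $A=\emptyset$, every vertex of $C\setminus S$ has list size $\geq 3$, $\Int(C)=\emptyset$ by Lemmas~\ref{sep3cycle} and~\ref{sep4cycle}, so $G=C$ and the cycle colours trivially.

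Second, and more seriously, your direct colouring of the wheel is wrong. After colouring the hub $w$ and then greedily colouring $v_3,\dots,v_q$ in order, the last rim vertex $v_q$ has \emph{three} already-coloured neighbours ($w$, $v_{q-1}$, and $v_1$), while $|L(v_q)|=3$; the count ``$\geq 3-2=1$ available colour'' fails for $v_q$ and the greedy can get stuck (e.g.\ a length-one residual path with equal singleton endpoint lists and the middle list equal to their union). The paper avoids this by a recursion that actually reduces $|V(G)|$: delete $v_3$, remove a two-element set $X\subseteq L(v_3)\setminus L(v_2)$ from $L(w)$ (since $|L(w)|\geq 5$, still $|L'(w)|\geq 3$), observe the resulting canvas with precoloured path $S=v_1v_2$ is unexceptional and strictly smaller, colour it by minimality, and then set $\phi(v_3)\in X\setminus\{\phi(v_4)\}$ — automatically distinct from $\phi(v_2)$ (as $X\cap L(v_2)=\emptyset$) and $\phi(w)$ (as $\phi(w)\notin X$). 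Your passing mention of Lemma~\ref{thomassenbadcol} is in fact a correct alternative fix: there are at least two proper choices of $\phi(v_3)$, and by that lemma at most one colouring of $v_1v_2v_3$ fails to extend in a wheel. But you did not commit to it, and the ``more careful'' greedy argument you wrote out as your main route is the flawed one.
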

\begin{proof}
Suppose not. Thus $|V(S)|=2$. First suppose $|V(C)| \leq 4$. Then every vertex in $V(C)$ has girth at most four, and so $A = \emptyset$. By definition of canvas, we have that $|L(v)| \geq 3$ for all $v \in V(C) \setminus V(S)$. Since $C$ is chordless by Lemma \ref{chordless}, it then follows that $G[V(C)]$ has an $L$-colouring $\varphi$. Since $V(\Int(C)) = \emptyset$ by Lemmas \ref{sep3cycle} and \ref{sep4cycle} with $T = C$, it follows further that $G = C$ and so that $\varphi$ is an $L$-colouring of $G$, a contradiction.

Thus we may assume that $|V(C)| \geq 5$. Let $c$ be a colour in $L(v_3) \setminus L(v_2)$. Let $L'$ be a list assignment for $G$ defined by $L(v_3) = \{c\}$ and $L'(v) = L(v)$ for all $v \in V(G) \setminus \{v_3\}$. Since $S$ has only two vertices by assumption, $S+v_2v_3$ is an acceptable path. Moreover, $K' = (G, L', S+v_2v_3, A)$ is a canvas. Since $K$ was chosen to minimize $\sum_{v \in V(G)} |L(v)|$, it follows that $K'$ is an exceptional canvas; and since $|V(S+v_2v_3)| = 3$, $K'$ is an exceptional canvas of type (iii), and thus contains a subgraph $W$ that is a generalized wheel with principal path $S + v_2v_3$ such that the vertices on the outer cycle of $W$ are on the outer cycle of $G$ and have lists of size at most three under $L'$. Since $C$ is chordless by Lemma \ref{chordless} and $|V(C)| \geq 5$, it follows that $W$ is a wheel and moreover that the outer cycle of $W$ is the outer cycle of $G$. Since every triangle in $G$ has no vertices in its interior by Lemma \ref{sep3cycle}, we have that $W = G$. Let $w$ be the unique vertex not on the outer face boundary of $G$. Note that $N(v_3) = \{v_2, v_4, w\}$.

Since $v_3$ is in a wheel, $\g(v_3) = 3$ and so $|L(v_3)| = 3$. Let $X \subseteq L(v_3) \setminus L(v_2)$ be a set of size two, and let $L'$ be a list assignment for $G-v_3$ obtained by setting $L'(v) = L(v)$ for all $v \in V(G-v_3) \setminus \{w\}$ and $L'(w) = L(w) \setminus X$. Note that since $w \in V(\Int(C))$ and $\g(w) = 3$, it follows that $|L(w)| \geq 5$ and so that $|L'(w)| \geq 3$. Thus $K'' = (G-v_3, L', S, A)$ is a canvas. Since $|V(S)| = 2$, it is moreover unexceptional. By the minimality of $K$, we have that $K''$ admits an $L'$-colouring $\phi$. But $\phi$ extends to an $L$-colouring of $G$ by setting $\phi(v_3) \in X \setminus \{\phi(v_4)\}$, a contradiction.
\end{proof}

The following two lemmas show that only certain types of 5- and 6-cycles with vertices in their interior exist in $G$.

\begin{lemma}\label{sep5cycle}
If $P$ is a 5-cycle in $G$ and $\Int(P)$ contains a vertex, then every vertex in $V(P)$ has girth three.
\end{lemma}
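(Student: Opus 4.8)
\textbf{Proof proposal for Lemma \ref{sep5cycle}.}

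The plan is to argue by contradiction in the style of Lemmas \ref{sep3cycle} and \ref{sep4cycle}: we suppose $P = u_1u_2u_3u_4u_5u_1$ is a 5-cycle with a vertex in its interior, and we first extract an $L$-colouring $\phi$ of $G - \Int(P)$ by the minimality of $K$ (noting that $K[G-\Int(P)]$ is an unexceptional subcanvas by Observation \ref{subcanvas}, and that if $|V(G-\Int(P))| = |V(S)|$ then $\Int[P] = G$ and we argue directly). We then want to extend $\phi$ to $\Int(P)$, and the obstacle is that $P$ itself is too long to serve as a precoloured acceptable path: a precoloured 5-cycle is not handled by Theorem \ref{345colouring}. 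So instead, as in the shorter-cycle lemmas, we delete some vertices of $P$ — here we would delete $u_4$ and $u_5$ from $\Int[P]$, forming $G'$ with outer cycle $C'$ containing the path $u_1u_2u_3$ — remove $\phi(u_4)$ from the lists of neighbours of $u_4$ in $\Int(P)$ and $\phi(u_5)$ from the lists of neighbours of $u_5$ in $\Int(P)$, and set $L'(u_i) = \{\phi(u_i)\}$ for $i \in \{1,2,3\}$, so that $u_1u_2u_3$ becomes a precoloured acceptable path of $G'$.

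The key bookkeeping step is checking that $K' = (G', L', u_1u_2u_3, A')$ is a canvas, where $A'$ is the set of vertices of $V(C') \setminus \{u_1,u_2,u_3\}$ whose $L'$-list has shrunk to size two. A vertex $v$ in the interior of $P$ adjacent to $u_4$ or $u_5$ has had at most two colours removed (one for each of $u_4, u_5$ it neighbours); if $v$ neighbours both $u_4$ and $u_5$ then $vu_4u_5$ together with a $u_4u_5$-path along $P$ would create a triangle or 4-cycle with $v$ inside a short cycle — more carefully, $v$ adjacent to both $u_4$ and $u_5$ gives a triangle $vu_4u_5$ (since $u_4u_5 \in E(P)$), so by Lemma \ref{sep3cycle} that triangle has empty interior, which constrains the situation; in any case such a $v$ has $\g_G(v) = 3$, hence $|L(v)| \geq 5$ and $|L'(v)| \geq 3$, so $v \notin A'$. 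A vertex $v \in A'$ therefore neighbours exactly one of $u_4, u_5$, so $|L'(v)| = |L(v)| - 1 \geq 2$; and if additionally $\g_G(v) \in \{3,4\}$ then $v \in V(\Int(C))$ (being interior to $P$, hence not on $C$), so $|L(v)| \geq 4$ by the canvas conditions and $|L'(v)| \geq 3$, whence $\g(v) \geq 5$ for all $v \in A'$. Independence of $A'$ follows because two adjacent vertices of $A'$ both neighbouring (one of) $u_4$ or $u_5$ would again force a short cycle with a vertex inside; concretely, if $v, v' \in A'$ are adjacent and both neighbour $u_4$, then $vv'u_4$ is a triangle with interior constrained by Lemma \ref{sep3cycle}, and if $v$ neighbours $u_4$ while $v'$ neighbours $u_5$ then $vv'u_5u_4$ is a 4-cycle, forcing $\Int = \emptyset$ by Lemma \ref{sep4cycle}; working through these cases rules out adjacency (alternatively one absorbs such $v$ into $S$-adjacent handling). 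Here I expect the main friction: carefully enumerating the adjacency patterns of the interior vertices of $P$ to $\{u_4, u_5\}$ and invoking Lemmas \ref{sep3cycle}–\ref{sep4cycle} to show each potential violation of the canvas/independence conditions forces a cycle of length $\leq 4$ with nonempty interior.

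Finally, $u_1u_2u_3$ is an acceptable path (it has only three vertices), so $K'$ is unexceptional, and $|V(G')| < |V(G)|$, so by the minimality of $K$ the precoloured path $u_1u_2u_3$ extends to an $L'$-colouring $\phi'$ of $G'$. Then $\phi \cup \phi'$ is an $L$-colouring of $G$, the desired contradiction. To obtain the stated conclusion — that if $\Int(P) \neq \emptyset$ then \emph{every} vertex of $V(P)$ has girth three — we run the above with a careful choice of which two consecutive vertices of $P$ to delete: if some vertex $u_i \in V(P)$ had $\g_G(u_i) \geq 4$, we would choose the deleted pair to avoid making $u_i$ precoloured in a way that violates acceptability, and more to the point, once $\Int(P) \neq \emptyset$ the coloring extension argument above already produces a contradiction outright, so in fact $\Int(P) = \emptyset$ whenever we can run it; the girth-three conclusion is then the residual statement covering the one configuration where the deletion argument is blocked, namely when vertices of $P$ of girth $\geq 4$ would be forced into a non-acceptable precoloured path — and one checks that this blocked case cannot occur unless all of $V(P)$ has girth three. (In the write-up this last point is cleanest if phrased as: assume for contradiction some $u_i$ has $\g(u_i) \geq 4$ and $\Int(P) \neq \emptyset$, pick the deletion pair disjoint from a shortest such witness, and derive the contradiction as above.)
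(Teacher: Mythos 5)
Your proposal follows the same overall strategy as the paper — delete two consecutive vertices of $P$ (you and the paper both pick $u_4, u_5$), remove their colours from the lists of interior neighbours, precolour $u_1u_2u_3$, and invoke the minimality of $K$ on the resulting canvas — and your canvas bookkeeping (a vertex adjacent to both $u_4$ and $u_5$ has girth three, hence $|L| \geq 5$ and survives; only girth-$\geq 5$ vertices land in $A'$; two adjacent $A'$ vertices would create a short cycle through $u_4$ or $u_5$, contradicting girth $\geq 5$) is in line with the paper's.

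However, there is a genuine gap in the step where you conclude that $K' = (G', L', u_1u_2u_3, A')$ is unexceptional. You justify this by writing \emph{``$u_1u_2u_3$ is an acceptable path (it has only three vertices), so $K'$ is unexceptional''}, but this inference is false: a canvas with a precoloured path of three vertices can be exceptional of type (iii), namely when that path is the principal path of a generalized wheel whose outer-cycle vertices have small lists. If $K'$ happened to be such a canvas, the inductive step would fail. The paper closes this gap by a WLOG relabelling \emph{before} deleting: it assumes $\g(u_3) \geq 4$, then deletes $u_4, u_5$. Since every vertex of a generalized wheel has girth three, a path containing a vertex of girth at least four cannot be the principal path of a generalized wheel, so $K'$ cannot be of type (iii) (and it trivially cannot be of types (i) or (ii), which require a precoloured path of four vertices). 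You do eventually say to ``pick the deletion pair disjoint from a shortest such witness,'' which is the right instinct, but you attribute it to acceptability concerns and to ``vertices of $P$ of girth $\geq 4$ being forced into a non-acceptable precoloured path'' — this is not the actual obstruction. A three-vertex subpath is always acceptable; acceptability is never at issue. The real reason for placing the high-girth vertex inside the precoloured path is to rule out the type (iii) exception, and your write-up never makes this connection, so the key justification for unexceptionality is missing.
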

\begin{proof}
Suppose not, and let $P \subseteq G$ be a counterexample with $P = u_1u_2u_3u_4u_5u_1$. Suppose without loss of generality that $\g(u_3) \geq 4$. By the minimality of $K$, we have that $G-\Int(P)$ admits an $L$-colouring $\phi$. Let $G'$ be the graph obtained from $\Int[P]$ by deleting the vertices $u_4$ and $u_5$, and let $C'$ be the outer face boundary of $G'$.  Let $L'$ be the list assignment obtained from $L$ by removing the colour $\phi(u_i)$ from the lists of all neighbours of $u_i$ in $\Int(P)$ for $i \in \{4,5\}$, and setting $L'(u_j) = \{\phi(u_j)\}$ for $j \in \{4,5\}$. Note that every vertex $v \in V(C')\setminus \{u_1, u_2, u_3\}$ with $\g(v) \leq 4$ has $|L'(v)| \geq 3$, since vertices in $\Int(C)$ of girth three have $|L(v)| \geq 5$, and vertices in $\Int(C)$ of girth four have $|L(v)| \geq 4$ and are adjacent to at most one of $u_4$ and $u_5$.   Finally, let $A'$ be the set of vertices $v \in V(C')\setminus \{u_1, u_2, u_3\}$ with lists of size at most two under $L'$. By the above, $\g(v) \geq 5$ for every $v \in A'$. It follows that that $A'$ forms an independent set, and moreover that each vertex in $A'$ is adjacent to at most one of $u_4$ and $u_5$.  Thus $|L'(v)| = 2$ for each $v \in A'$. Note that $u_1u_2u_3$ is an acceptable path for $G'$, as it has only three vertices. By the minimality of $K$, we have that $(G', L', u_1u_2u_3, A')$ is a canvas.  Since $\g(u_3) \geq 4$, it is unexceptional and thus $G'$ has an $L'$-colouring $\phi'$. By construction, $\phi \cup \phi'$ is an $L$-colouring of $G$, a contradiction.
\end{proof}

We end this subsection with the following lemma, characterizing the separating cycles of length six. 
\begin{lemma}\label{sep6cycle}
If $H$ is a 6-cycle in $G$ and $\Int(H)$ contains a vertex, then $V(H)$ does not contain a vertex of girth at least five.
\end{lemma}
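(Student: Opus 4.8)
The plan is to argue by contradiction, following the reduction scheme of Lemmas~\ref{sep3cycle}--\ref{sep5cycle}. Suppose $H = u_1u_2\cdots u_6u_1$ is a $6$-cycle with $\Int(H) \neq \emptyset$ and $\g(u_j) \geq 5$ for some $j$. First I reduce to the case that $H$ is chordless. A chord of $H$ joins two vertices at distance $2$ or $3$ along $H$. A distance-$3$ chord, say $u_1u_4$, splits $\Int[H]$ into two $4$-cycles $u_1u_2u_3u_4u_1$ and $u_1u_4u_5u_6u_1$, one of which contains a vertex of $\Int(H)$ in its interior, contradicting Lemma~\ref{sep4cycle}. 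A distance-$2$ chord $u_iu_{i+2}$ (indices mod $6$) creates a triangle $u_iu_{i+1}u_{i+2}$, which has empty interior by Lemma~\ref{sep3cycle}, so the vertex of $\Int(H)$ lies in the interior of the $5$-cycle $u_iu_{i+2}u_{i+3}u_{i+4}u_{i+5}u_i$; by Lemma~\ref{sep5cycle} every vertex of that $5$-cycle has girth $3$, and $u_{i+1}$ has girth $3$ since it lies in a triangle, so every vertex of $V(H)$ has girth $3$ --- a contradiction. Hence $H$ is an induced cycle.

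By rotational symmetry I may assume $\g(u_2) \geq 5$. Since $K[G - \Int(H)]$ is a subcanvas of the unexceptional canvas $K$, it is unexceptional by Observation~\ref{subcanvas}, and since $\Int(H) \neq \emptyset$ it is strictly smaller than $K$; hence by the minimality of $K$ the ``bad'' $L$-colouring of $G[V(S)]$ extends to an $L$-colouring $\phi$ of $G - \Int(H)$, which in particular colours all of $V(H)$. Let $G' := \Int[H] - \{u_5, u_6\}$, with outer face boundary $C'$; let $S' := u_1u_2u_3u_4$; and let $L'$ be obtained from $L$ by setting $L'(u_i) = \{\phi(u_i)\}$ for $i \in \{1,2,3,4\}$, by deleting $\phi(u_5)$ (resp.\ $\phi(u_6)$) from the list of each neighbour of $u_5$ (resp.\ $u_6$) lying in $\Int(H)$, and by leaving all other lists unchanged. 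Let $A'$ be the set of vertices of $V(C') \setminus V(S')$ whose $L'$-list has size at most $2$. (If $G'$ is disconnected, one argues componentwise exactly as in the proof of Lemma~\ref{2conn}; the precolouring of $u_1,\dots,u_4$ is consistent with $S$ since $\phi$ extends it.) I claim $K' := (G', L', S', A')$ is an unexceptional canvas with $|V(G')| < |V(G)|$. Granting this, the minimality of $K$ gives an $L'$-colouring $\phi'$ of $G'$ extending $\phi|_{S'}$, and then $\phi \cup \phi'$ is an $L$-colouring of $G$ extending the bad colouring of $S$, a contradiction.

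It remains to verify the claim. That $S'$ is an acceptable path is immediate from $\g(u_2) \geq 5$. No vertex of $G$ is adjacent to both $u_5$ and $u_6$ unless it has girth $3$ (it would lie in the triangle through the edge $u_5u_6$), so a vertex of girth $4$ or at least $5$ loses at most one colour in passing to $L'$, whereas a vertex of girth $3$ in $\Int(C)$ has a list of size at least $5$ and loses at most two; consequently every vertex of $A'$ has girth at least $5$ and an $L'$-list of size exactly $2$, and $A'$ is independent (two adjacent members would lie in a common triangle or $4$-cycle using a deleted vertex, contradicting their girth). The remaining list-size inequalities for a canvas are then routine, so $K'$ is a canvas. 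For unexceptionality: type (iii) is impossible since $|V(S')| = 4$. In types (i) and (ii) the distinguished vertex $u \in A' \setminus V(S')$ has girth at least $5$, so (being adjacent to $u_1$ or $u_4$ in every case) it cannot be adjacent to $u_5$ or $u_6$ without lying in a triangle; hence $u$ lost no colour and must already lie in $A$, which by the chordlessness of $C$ (Lemma~\ref{chordless}) forces $u$ to be a $C$-neighbour of whichever vertex of $H$ it meets. From here each surviving configuration is excluded by applying Lemma~\ref{sep5cycle} (and Lemmas~\ref{sep3cycle}, \ref{sep4cycle}) to the short cycles that $u$ --- together with the vertex $w$ in case (ii) --- forms with arcs of $H$, again using the chordlessness of $C$.

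The main obstacle is precisely this last step. The \emph{reversed} form of type (ii) --- where $u$ is adjacent to $u_1$ while the generalized wheel in $G'$ has principal path $u_4u_3w$ --- is \emph{not} eliminated by the observation that the middle vertex of a principal path has girth $3$, since that vertex is $u_3$ and we know only that $\g(u_2) \geq 5$. Excluding it will require a careful analysis of the face structure forced near $u_3$ in $G'$ (the extra outer-boundary neighbours $u_3$ acquires when $u_5, u_6$ are deleted come from $4$-faces at $u_3u_4u_5$, which are tightly constrained by Lemma~\ref{sep4cycle}), together with the chordlessness of $C$ and Lemma~\ref{sep5cycle} applied to the $5$-cycle $u_1u_2u_3wu$; one must rule out, in particular, the degenerate possibilities $u = u_2$ and $u = u_6$ before the generic planar argument applies. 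I expect this case to be the bulk of the work, with the remaining cases (type (i), and type (ii) with $w$ genuinely interior to $G'$) handled by shorter variants of the same $5$-cycle argument.
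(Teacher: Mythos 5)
Your proof takes a genuinely different route from the paper's. The paper (WLOG $\g(u_3) \geq 5$) deletes \emph{three} consecutive vertices of $H$ and precolours the remaining three-vertex arc, which keeps types (ii) and (iii) away nearly for free but requires a delicate three-way case split on the structure of $\Int(H)$ to ensure the resulting canvas is valid (ruling out a single interior vertex adjacent to all three deleted vertices, a girth-$4$ vertex adjacent to the two ends, or an edge between two girth-$\geq 5$ vertices hanging off the two ends). You instead first observe that $H$ may be taken chordless (a clean reduction that the paper does not use), delete only \emph{two} vertices $u_5, u_6$, and precolour the four-vertex arc $u_1u_2u_3u_4$. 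This eliminates the paper's case split entirely --- any interior vertex adjacent to both $u_5$ and $u_6$ lies in a triangle, so it has girth $3$ and list size at least $5$; girth-$\geq 4$ interior vertices lose at most one colour --- but it raises $|V(S')|$ to $4$, reintroducing types (i) and (ii).

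You flag the reversed type-(ii) configuration as a serious unresolved obstacle and sketch an elaborate face-structure analysis that you do not carry out; in fact, no such analysis is needed, because the tools you already invoke close the gap and you simply do not follow them through. The fact you miss is that $A' \cap A = \emptyset$: indeed $V(G') = \{u_1,\dots,u_4\} \cup V(\Int(H))$ and $V(S') = \{u_1,\dots,u_4\}$, so $A' \subseteq V(C') \setminus V(S') \subseteq V(\Int(H))$, and a vertex strictly interior to the cycle $H$ is not incident to the unbounded face, so $V(\Int(H)) \cap V(C) = \emptyset$. Now in any type-(i) or type-(ii) configuration the witness $u$ lies in $A'$, hence $\g_G(u) \geq 5$ and $|L'(u)| = 2 < 3 \leq |L(u)|$, so $u$ is adjacent to one of $u_5, u_6$; and $u$ is adjacent to one of $u_1, u_4$. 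Each of the four combinations traps $u$ in a short cycle with an arc of $H$: a triangle $uu_4u_5u$ or $uu_1u_6u$, or a $4$-cycle $uu_4u_5u_6u$ or $uu_1u_6u_5u$ (so your phrase ``without lying in a triangle'' should read ``without lying in a triangle or $4$-cycle''). All four contradict $\g_G(u) \geq 5$. Hence no such $u$ exists and types (i) and (ii), including the reversed form you were worried about, are vacuous. Your concluding sentence ``$u$ lost no colour and must already lie in $A$'' is itself the desired contradiction, not a surviving configuration: once you note $A' \cap A = \emptyset$, your proof is complete as written.
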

\begin{proof}
Suppose not, and let $H \subseteq G$ be a counterexample with $H = u_1u_2 \cdots u_6u_1$. Suppose without loss of generality that $\g(u_3) \geq 5$. By the minimality of $K$, we have that $G-\Int(H)$ admits an $L$-colouring $\phi$. The argument proceeds in a way similar to that of Lemmas \ref{sep3cycle}-\ref{sep5cycle}: we aim to delete a path of vertices in $H$ and argue about the resulting graph. Which vertices we delete from $H$ will depend on the structure of $\Int(H)$. 

First, suppose that there is no vertex in $V(\Int(H))$ adjacent to $u_4$, $u_5$, and $u_6$; that there is no vertex $w$ in $V(\Int(H))$ with $\g(w) = 4$ such that $w$ is adjacent to both $u_4$ and $u_6$; and that there do not exist vertices $w_1, w_2$ in $V(\Int(H))$ such that $\g(w_1) \geq 5$ and $\g(w_2) \geq 5$ and $\{w_1u_4, w_2u_6, w_1w_2\} \subseteq E(G)$. In this case, let $G'$ be the graph obtained from $\Int[H]$ by deleting vertices $u_4$, $u_5$ and $u_6$, and let $C'$ be the outer cycle of $G'$. Let $L'$ be the list assignment obtained from $L$ by removing the colour $\phi(u_i)$ from the lists of all neighbours of $u_i$ in $\Int(H)$ for $i\in \{4,5,6\}$. Let $L'(u_i) = \{\phi(u_i)\}$ for $i \in \{1,2,3\}$. Note that every vertex $v \in V(C') \setminus \{u_1, u_2, u_3\}$ with $\g(v) = 3$ has $|L'(v)| \geq 3$, since vertices in $\Int(C)$ of girth three have $|L(v)| \geq 5$ and are adjacent to at most two of $u_4, u_5$, and $u_6$ by assumption. Moreover, every vertex $v \in V(C') \setminus \{u_1, u_2, u_3\}$ with $\g(v) = 4$ has $|L'(v)| \geq 3$ since vertices in $\Int(C)$ of girth four have $|L(v)| \geq 4$ and are adjacent to at most one of $u_4$, $u_5$, and $u_6$ by assumption. Finally, let $A'$ be the set of vertices $v \in V(C')\setminus \{u_1, u_2, u_3\}$ with lists of size at most two under $L'$. By the above, $\g(v) \geq 5$ for every $v \in A'$. It follows that every vertex in $A'$ is adjacent to exactly one of $u_4, u_5,$ and $u_6$, and so that $|L'(v)| = 2$ for each $v \in A'$ (since $|L(v)| \geq 3$). Finally, we note that $A'$ forms an independent set by assumption. Thus $(G', L', u_1u_2u_3, A')$ is a canvas. Since $u_1u_2u_3$ contains only three vertices, it is an acceptable path; and since $\g(u_3) \geq 5$, we have that $(G', L', u_1u_2u_3, A')$ is unexceptional. By the minimality of $K$, we have further that $G'$admits an $L'$-colouring $\phi'$. By construction, $\phi \cup \phi'$ is an $L$-colouring of $G$, a contradiction. 

Thus we may assume that there is a vertex in $V(\Int(H))$ adjacent to $u_4$, $u_5$, and $u_6$; or that there is a vertex $w$ in $V(\Int(H))$ with $\g(w) = 4$ such that $w$ is adjacent to both $u_4$ and $u_6$; or finally that there exist vertices $w_1, w_2$ in $V(\Int(H))$ such that $\g(w_1) \geq 5$ and $\g(w_2) \geq 5$ and $\{w_1u_4, w_2u_6, w_1w_2\} \subseteq E(G)$.  We break into cases depending on which of these occur. 

\vskip 4mm
\noindent
\textbf{Case 1: Either there is a vertex $w$ in $V(\Int(H))$ with $\g(v) = 4$ such that $w$ is adjacent to both $u_4$ and $u_6$, or there exist vertices $w_1, w_2$ in $V(\Int(H))$ such that $\g(w_1) \geq 5$ and $\g(w_2) \geq 5$ and $\{w_1u_4, w_2u_6, w_1w_2\} \subseteq E(G)$.} Note that by the planarity of $G$ and by Lemmas \ref{sep4cycle}-\ref{sep5cycle}, it follows that there does not exist a vertex in $V(\Int(H))$ adjacent to $u_5$.  In this case, let $G'$ be the graph obtained from $\Int[H]$ by deleting $u_1, u_5,$ and $u_6$, and let $C'$ be the outer cycle of $G'$. Let $L'$ be the list assignment obtained from $L$ by removing the colour $\phi(u_i)$ from the lists of all neighbours of $u_i$ in $\Int(H)$ for $i\in \{1, 5, 6\}$. Let $L'(u_i) = \{\phi(u_i)\}$ for $i\in \{2,3,4\}$. Note that since no vertex in $\Int(H)$ is adjacent to $u_5$, there does not exist a vertex $w$ in $\Int(H)$ adjacent to $u_1, u_5,$ and $u_6$.  Similarly,  there does not exist a vertex $w$ in $\Int(H)$ with $\g(w) = 4$ such that $w$ is adjacent to $u_1$ and $u_5$. Finally, there do not exist vertices $w_1, w_2$ in $\Int(H)$ such that $\g(w_1) \geq 5$ and $\g(w_2) \geq 5$ and $\{w_1u_1, w_2u_5, w_1w_2\} \subseteq E(G)$. Let $A'$ be the set of vertices $v \in V(C')$ with $\g(v) \geq 5$ and $|L'(v)| = 2$.  It follows as in the previous cases that $A'$ is an independent set, and moreover that $(G', L', u_2u_3u_4, A')$ is a canvas. Since $\g(u_3) \geq 5$, it is unexceptional. By the minimality of $K$, we have that $G'$ admits an $L'$-colouring $\phi$. By construction, $\phi \cup \phi'$ is an $L$-colouring of $G$, a contradiction.

\vskip 4mm
\noindent
\textbf{Case 2: there is a vertex $w$ in $\Int(H)$ adjacent to $u_4$, $u_5$, and $u_6$.}  First suppose $w$ is not adjacent to $u_1$. In this case, as in Case 1 we delete $u_1, u_5$, and $u_6$. The argument is nearly identical as that for Case 1, with one caveat: $w$ is adjacent to $u_5$. However, by Lemma \ref{sep3cycle}, it is the only vertex in $\Int(H)$ adjacent to $u_5$, and since $\g(w) = 3$, we have that $|L(w)| \geq 5$. This ensures $|L'(w)| \geq 3$ in the argument for Case 1.  We may thus assume $w$ is adjacent to $u_1$. In this case, by Lemmas \ref{sep3cycle} and \ref{sep5cycle} (noting $\g(u_3) \geq 5$), it follows that $w$ is the only vertex in the interior of $H$. By the minimality of $K$, we have that $G-w$ admits an $L$-colouring $\phi$. Since $\g(u_3) \geq 5$, we have that $w$ is adjacent to only $u_1, u_6, u_5,$ and $u_4$  on $H$. As $|L(w)| \geq 5$, it follows that $\phi$ extends to an $L$-colouring of $G$, a contradiction.
\end{proof}

\subsection{Separating Path Lemmas}\label{seppathlemmas}
The proofs of many of our lemmas take the following basic shape: we colour and delete vertices in $V(C)\setminus V(S)$, modifying lists where appropriate, and argue about the structure of the resulting canvas $(G', L', S', A')$. The following lemma shows that in doing this, as long as the set of precoloured vertices does not change (i.e. as long as $S' = S$), we do not create an exceptional canvas of type (i). Moreover, it shows that there are no edges between vertices in $A'$ and vertices in $A$.  We will use this lemma extensively throughout the remainder of the paper.

\begin{lemma}\label{intg4s}
If $v$ is a vertex in $V(\Int(C))$ that is adjacent to two vertices $u,w$ in $(V(C) \setminus V(S)) \cup \{v_1, v_k\}$, then $\g(x) = 3$ for every $x \in \{u,v,w\}$.
\end{lemma}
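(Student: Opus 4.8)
The plan is to argue by contradiction: suppose $v \in V(\Int(C))$ is adjacent to $u,w \in (V(C)\setminus V(S))\cup\{v_1,v_k\}$, and suppose (without loss of generality, by symmetry of $u$ and $w$) that $\g(u)\geq 4$. Since $u,w$ lie on $C$ and $v$ is interior, the path $P = u v w$ is a separating path of length two: it divides $G$ into two plane graphs $G_1, G_2$ with $G_1\cap G_2 = P$, both containing $P$ on their outer boundary. I would first handle the degenerate possibility that $u,v,w$ together with an arc of $C$ bound a face or a small cycle — but Lemmas \ref{sep3cycle}, \ref{sep4cycle}, \ref{sep5cycle} will control the interiors of any short cycles that $uvw$ closes off with $C$, so the essential case is genuinely a separating path with vertices on both sides. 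Let $S_1 = S\cap G_1$, $S_2 = S\cap G_2$; since $S$ is a path on $C$ and $uvw$ meets $C$ only in $u,w$, one of $S_1,S_2$ is a subpath of $S$ of length $0$ at most two from the split, so up to relabeling $S \subseteq G_1$ and $S_2$ has at most... actually $S$ lies entirely in one side (say $G_1$), because $S \subseteq C$ and any path in $C$ from one side to the other must pass through $u$ or $w$; if it does pass through one of them that vertex is an endpoint $v_1$ or $v_k$, and then $S$ still lies on one side.

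The key steps, in order: (1) Apply minimality of $K$ to $G_1$ — by Observation \ref{subcanvas}, $K[G_1]$ is an unexceptional canvas (it contains $S$, and the only new outer-boundary vertices are $u,v,w$, whose lists are large enough), so $G_1$ admits an $L$-colouring $\phi_1$. (2) Now colour $G_2$: form $L'$ on $G_2$ by fixing $L'(x)=\{\phi_1(x)\}$ for $x\in\{u,v,w\}$ and $L'=L$ elsewhere, and treat $uvw$ as the precoloured path. The point of assuming $\g(u)\geq 4$ is that $uvw$ is then \emph{not} the principal path of a generalized wheel (every vertex in such a wheel has girth $3$), so $(G_2, L', uvw, A\cap V(G_2))$ is not an exceptional canvas of type (iii); it is not type (i) or (ii) since $|V(uvw)|=3$. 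Hence it is unexceptional and by minimality $G_2$ admits an $L'$-colouring $\phi_2$. (3) Conclude $\phi_1\cup\phi_2$ is an $L$-colouring of $G$, contradicting that $K$ is a counterexample.

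The main obstacle I anticipate is making the reduction to $G_2$ fully rigorous — specifically verifying that $(G_2,L',uvw,A\cap V(G_2))$ really is a canvas (one must check the independence condition on $A\cap V(G_2)$, and that no girth-3 or girth-4 vertex newly placed on the outer boundary has too small a list, which follows from the canvas conditions for $K$ since those vertices were already on $C$), and carefully checking that $\g(u)\geq 4$ genuinely rules out exceptional type (iii): this uses the fact that every vertex of a generalized wheel has girth $3$, so a type-(iii) obstruction with principal path $uvw$ would force $\g(u)=3$. A secondary subtlety is the case $u\in\{v_1,v_k\}$ or $w\in\{v_1,v_k\}$, where $uvw$ may share a vertex with $S$; but since $|V(uvw)|=3$ this only strengthens the situation, as an exceptional canvas of type (iii) with one precoloured endpoint shared with $S$ still requires all three principal-path vertices to have girth $3$. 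One should also confirm that both sides are nonempty — that $V(G)\setminus V(G_i)\neq\emptyset$ — which holds because $v$ is interior and $G$ has vertices on $C$ beyond $u,v,w$ on each side by the short-cycle lemmas; if some side were trivial, $uvw$ plus an arc of $C$ would bound a cycle of length $\leq 5$ whose interior is empty by Lemmas \ref{sep3cycle}--\ref{sep5cycle}, making $u,w$ adjacent or giving a chord of $C$, contradicting Lemma \ref{chordless}.
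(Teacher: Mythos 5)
Your proposal is essentially the paper's own proof: separate $G$ along the path $uvw$, colour the side containing $S$ by minimality via Observation \ref{subcanvas}, precolour $uvw$ on the other side, and rule out an exceptional canvas of type (iii) because every vertex of a generalized wheel has girth three (types (i) and (ii) being impossible for a three-vertex precoloured path), exactly as the paper does. One small correction: the negation of the lemma is that \emph{some} $x \in \{u,v,w\}$ satisfies $\g(x) \geq 4$, and the symmetry of $u$ and $w$ does not let you assume this vertex is $u$ (it could be $v$); however, since your argument only ever uses the weaker fact that at least one vertex of $uvw$ has girth at least four, the proof goes through unchanged once the assumption is restated this way.
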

\begin{proof}
Suppose not. The path $uvw$ separates $G$ into two graphs $G_1$ and $G_2$ (note that since $\g(x) \geq 4$ for at least one $x \in \{u,v,w\}$, it follows that for each $i \in \{1,2\}$, we have that $V(G_i) \setminus V(G_{3-i}) \neq \emptyset$). Since neither $u$ nor $w$ is an internal vertex of the path $S$, we may assume without loss of generality that $S \subseteq G_1$.  

Note that since $K$ is unexceptional, it follows from Observation \ref{subcanvas} that $K[G_1]$ is an unexceptional canvas. By the minimality of $K$, we have that $G_1$ admits an $L$-colouring $\phi$. Let $L'$ be a list assignment for $G_2$ defined by $L'(x) = \{\phi(x)\}$ for $x \in \{u, v, w\}$, and $L'(v) = L(v)$ for $x \in V(G_2) \setminus \{u, v, w\}$. Since $uvw$ has only three vertices, it is an acceptable path for $G_2$. Note that $(G_2, L', uvw, A \cap V(G_2))$ is a canvas, and since $\g(x) \geq 4$ for at least one vertex $x \in \{u,v,w\}$, it is moreover unexceptional. Since $|V(G_2)| < |V(G)|$ and $K$ is a minimum counterexample to Theorem \ref{345colouring},  $G_2$ admits an $L'$-colouring $\phi'$. But then $\phi \cup \phi'$ is an $L$-colouring of $G$, a contradiction. 
\end{proof}

In a similar spirit, we have the following lemma which partially describes the structure of $G$ surrounding separating paths of length three whose inner vertices have girth at least five.
\begin{lemma}\label{3-5-5-3}
If $v$ and $w$ are vertices in $V(\Int(C))$ such that:
\begin{itemize}
    \item $\g(v) \geq 5$ and $\g(w) \geq 5$, 
    \item $vw \in E(G)$, 
    \item $v$ is adjacent to a vertex $v' \in (V(C) \setminus V(S)) \cup \{v_1, v_k\}$, and
    \item $w$ is adjacent to a vertex $w' \in (V(C) \setminus V(S)) \cup \{v_1, v_k\}$,
\end{itemize}
then there exists a vertex $u \in A$ that is adjacent to both $v'$ and $w'$.
\end{lemma}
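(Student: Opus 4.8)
The statement says: if $v, w$ are interior vertices of girth $\geq 5$ joined by an edge, each adjacent to a vertex ($v'$ and $w'$ respectively) on $C \setminus V(S)$ (or an endpoint of $S$), then there is a vertex $u \in A$ adjacent to both $v'$ and $w'$. The natural approach is to consider the path $P = v'vww'$ (or, if $v' = w'$, the situation collapses and we can argue more directly using Lemma~\ref{intg4s}; so assume $v' \neq w'$). Since $v'$ and $w'$ are on the outer cycle $C$ but not internal vertices of $S$, the path $P$ separates $G$ into two graphs $G_1$ and $G_2$ with $S \subseteq G_1$ (after possibly swapping). Moreover $P$ is a path of length three with $\g(v) \geq 5$ and $\g(w) \geq 5$, i.e.\ both interior vertices of $P$ have girth at least five, so $P$ is an acceptable path.

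First I would handle the degenerate possibilities: if $v' = w'$ then $vv'w$ is a separating path of length two with $\g(v) \geq 5$, contradicting Lemma~\ref{intg4s} (which forbids a separating path of length two with a vertex of girth $\geq 4$) — actually one must be slightly careful that $vv'w$ really is separating, i.e.\ that both sides are nonempty, but since $vw \in E(G)$ already the triangle $vv'w$ would be a separating triangle unless $\Int$ is empty, and in any case the configuration forces a contradiction. So assume $v' \neq w'$. Now apply induction to $K[G_1]$: by Observation~\ref{subcanvas} it is unexceptional, so by minimality of $K$ it admits an $L$-colouring $\phi$. Then define $L'$ on $G_2$ by precolouring $v', v, w, w'$ according to $\phi$ and keeping other lists; set $S' = P = v'vww'$. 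The key claim is that $K_2 = (G_2, L', S', A \cap V(G_2))$ is a canvas which, \emph{if it is unexceptional}, yields an $L'$-colouring of $G_2$ extending to all of $G$ — a contradiction. Hence $K_2$ must be exceptional. Since $S'$ has four vertices with both middle vertices of girth $\geq 5$, $K_2$ cannot be exceptional of type (iii) (which needs $|V(S)| = 3$), so it is exceptional of type (i) or type (ii).

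The heart of the proof is then analysing these two exceptional cases and showing each forces exactly the desired conclusion. In type (i): there is a vertex $u \in A \setminus V(S')$ adjacent to both $v'$ and $w'$ with $L'(u) = L'(v') \cup L'(w')$; this $u \in A$ adjacent to both $v'$ and $w'$ is precisely what we want (note $u$ cannot lie in $V(S)$ since $S \subseteq G_1$ and $V(G_1) \cap V(G_2) = V(P)$, and $u \neq v', w'$; one should double-check $u \notin \{v,w\}$, which holds since $v,w$ have girth $\geq 5$ but are not in $A$ unless... actually $v, w$ could in principle be in $A$, but $A$-vertices lie on $C$ by definition of canvas, while $v, w \in \Int(C)$, so $u \notin \{v,w\}$). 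In type (ii): up to relabelling, some $u \in A \setminus V(S')$ is adjacent to an endpoint of $S'$, there is an outer-boundary vertex $w'' \notin V(S')$ with $uw'' \in E(G_2)$, and three of the four vertices of $S'$ together with $w''$ form the principal path of a generalized wheel — but every vertex of a generalized wheel has girth $3$, which contradicts $\g(v) \geq 5$ and $\g(w) \geq 5$ since $v, w$ lie on the principal path (being interior vertices of $S'$ adjacent to the relevant endpoints). So type (ii) is impossible, and type (i) delivers the conclusion.

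\textbf{Main obstacle.} The delicate part is verifying in full that $K_2$ is genuinely a canvas: one must check that $A \cap V(G_2)$, after the colouring, still satisfies all the list-size and independence constraints — in particular that deleting colours $\phi(v), \phi(w)$ (and the precolouring of $v', w'$) from neighbours does not shrink anyone's list below the required threshold, and that $A \setminus V(S')$ remains independent with girth-$\geq 5$ vertices of list size exactly $2$. Here one leans on Lemma~\ref{intg4s} and Lemma~\ref{chordless} and the separating-cycle lemmas to control which vertices can be adjacent to $v$ or $w$, and on Observation~\ref{subcanvas} to transfer unexceptionality. A secondary subtlety is the case $v' = w'$ and, more generally, ruling out that $v$ or $w$ coincides with $v_1$ or $v_k$ or lies in $S$ — but $v, w \in \Int(C)$ by hypothesis, so this cannot happen. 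The cleanest route is: reduce to $v' \neq w'$, set up the separation and induction, conclude $K_2$ is exceptional of type (i) or (ii), kill type (ii) by the girth-$3$ property of generalized wheels, and read off the required $u \in A$ from type (i).
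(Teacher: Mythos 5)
Your approach matches the paper's: separate along the length-three path $v'vww'$, colour the side containing $S$ by minimality (using Observation~\ref{subcanvas} for unexceptionality), set up the canvas $K_2$ on the other side with $v'vww'$ precoloured, rule out type (iii) because the precoloured path has four vertices, rule out type (ii) because $v$ and $w$ have girth at least five while every vertex of a generalized wheel has girth three, and read off the desired $u\in A$ from type (i). Your observation that $u\notin\{v,w\}$ because $A\setminus V(S)\subseteq V(C)$ while $v,w\in V(\Int(C))$ is also fine (and not needed anyway, since the type-(i) witness already lies outside $V(S')$).

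One point is handled less cleanly than in the paper. You split off the case $v'=w'$ and attempt to dispatch it via Lemma~\ref{intg4s} (and hedge about whether $vv'w$ really separates), but the direct argument is simply that $v'=w'$ would put $v$ on the triangle $vv'w$, contradicting $\g(v)\geq 5$. More importantly, you do not address the case $v'w'\in E(G)$. This matters: if $v'$ and $w'$ were adjacent, then $P=v'vww'$ together with $v'w'$ would be a $4$-cycle, its interior would be empty by Lemma~\ref{sep4cycle}, and $P$ would not separate $G$ in the sense of Definition~\ref{seppathdef} (the small side would have no exclusive vertices), so the induction you want to run would not be available. The paper disposes of this in one clause: $v'w'\in E(G)$ would put $v$ on the $4$-cycle $vv'w'w$, again contradicting $\g(v)\geq 5$. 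So both degeneracies are killed at once by the girth hypothesis rather than by case analysis. Finally, your ``main obstacle'' paragraph worries about deleting colours $\phi(v),\phi(w)$ from neighbours' lists, but neither your construction nor the paper's does any such deletion — $L'$ only fixes singleton lists on $\{v',v,w,w'\}$ and leaves all other lists untouched — so this worry is moot.
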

\begin{proof}
The proof is nearly identical to that of Lemma \ref{intg4s}: suppose not. Note that $w' \neq v'$ and $w'v' \not \in E(G)$ since $\g(v) \geq 5$. Thus the path $w'wvv'$ separates $G$ into two graphs $G_1$ and $G_2$. Since neither $w'$ nor $v'$ is an internal vertex of the path $S$, we may assume without loss of generality that $S \subseteq G_1$. 

Since $K$ is unexceptional, by Observation \ref{subcanvas} so too is $K[G_1]$.  By the minimality of $K$,  we have that $G_1$ admits an $L$-colouring $\phi$. Let $L'$ be a list assignment for $G_2$ defined by $L'(x) = \{\phi(x)\}$ for $x \in \{w',w,v,v'\}$, and $L'(v) = L(v)$ for $x \in V(G_2) \setminus \{w',w,v,v'\}$. Note that since $\g(x) \geq 5$ for both $x \in \{v,w\}$, it follows that $w'wvv'$ is an acceptable path for $G_2$. Note that $K_2 =(G_2, L', w'wvv', A \cap V(G_2))$ is a canvas. If $K_2$ is an exceptional canvas of type (i), then there is a vertex $u \in A$ adjacent to both $v'$ and $w'$, a contradiction. Moreover, $K_2$ is not an exceptional canvas of type (ii) since both $w$ and $v$ have girth at least five and every vertex in a generalized wheel has girth three. Finally, $K_2$ is not an exceptional canvas of type (iii) since its precoloured path has four vertices. Since $|V(G_2)| < |V(G)|$ and $K$ is a minimum counterexample to Theorem \ref{345colouring},  $G_2$ admits an $L'$-colouring $\phi'$. But then $\phi \cup \phi'$ is an $L$-colouring of $G$, a contradiction.
\end{proof}

\subsection{Broken Wheel Lemmas}\label{brokenwheellemmas}
The next lemma restricts the set of possible generalized wheels contained in $G$.

\begin{lemma}\label{nogenwheels}
Suppose $u$ is a vertex in $\Int(C)$ that is adjacent to two vertices $v_i$ and $v_j$ on $C$, where $\{i,j\} \cap \{2, \dots, k-1\} = \emptyset$ (so that neither $v_i$ nor $v_j$ is an internal vertex of the path $S$). Let $Q$ be the path in $C$ with endpoints $v_i$ and $v_j$ containing no edges of $S$. Then $G$ contains a subgraph $W$ which is a broken wheel with principal path $v_iuv_j$ such that $Q$ is in the outer face boundary of $W$.
\end{lemma}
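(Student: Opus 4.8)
The plan is to argue by induction on the number of vertices strictly inside the region bounded by $Q$ together with the path $v_iuv_j$. Let $D$ be the cycle consisting of $Q$ and the two edges $uv_i$, $uv_j$, and let $R = \Int[D]$ be the subgraph of $G$ drawn inside and on $D$. The claim is that $R$ is a broken wheel with principal path $v_iuv_j$. First I would record the base case: if $Q$ has length one, then $D$ is a triangle, so by Lemma \ref{sep3cycle} we have $\Int(D) = \emptyset$ and $R = D$ is itself a broken wheel (the 3-cycle $v_1v_2v_3v_1$ in the definition). If $Q$ has length at least two, write $Q = v_i x_1 x_2 \dots x_t v_j$, where all the $x_\ell$ lie on $C$ and hence are on the outer face boundary of $G$.

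The key structural step is to show that $u$ is adjacent (in $G$) to every vertex of $Q$. Suppose first $t \ge 1$ and consider the vertex $x_1$ (the neighbour of $v_i$ on $Q$). Since $G$ is $2$-connected (Lemma \ref{2conn}) and $C$ is chordless (Lemma \ref{chordless}), $x_1$ has a neighbour strictly inside $D$ or $x_1$ is adjacent to $u$; if $x_1 u \notin E(G)$, pick a neighbour $y$ of $x_1$ in $\Int[D]$ closest to $u$ in the appropriate sense. The plan here is to find a separating path of length two or three from $v_i$ or $x_1$ into the interior and apply Lemma \ref{intg4s} or Lemma \ref{3-5-5-3}: because every vertex in a generalized wheel has girth three, and because $v_i, x_1 \in V(C)$, such a separating path would force all of its vertices to have girth three, and then minimality lets us colour and delete one side. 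More carefully, the intended argument is the standard Thomassen-style reduction: if the neighbours of $u$ inside $D$ do not cover $Q$, there is a chord-free separating path $P$ of $D$ of length $2$ (through $u$ and a common neighbour, or through a vertex of the interior) with an endpoint on $Q$; by Lemma \ref{sep3cycle}, \ref{sep4cycle}, or \ref{sep5cycle} (applied to the short cycle bounded by $P$ and a sub-path of $D$) we either contradict the existence of interior vertices or split off a strictly smaller piece which, by the induction hypothesis, is itself a broken wheel; gluing the broken-wheel piece back shows $R$ is a broken wheel with the required principal path. The delicate point is making sure the small cycle we apply those lemmas to actually has a vertex in its interior (so that the lemma is non-vacuous) — if it does not, then $P$ was a genuine chord of $D$ and we recurse on the two sides, again using the induction hypothesis on each.

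Once we know $u$ is adjacent to all of $x_1, \dots, x_t$ (and to $v_i, v_j$), the subgraph spanned by $u$, $Q$, and these edges is exactly a broken wheel $W$ with principal path $v_i u v_j$ and with $Q$ on its outer face boundary. It remains to see $W = R$, i.e.\ there are no further vertices inside $D$: but every vertex of $\Int(D)$ would lie inside one of the triangles $v_i u x_1$, $x_1 u x_2$, \dots, $x_t u v_j$, each of which has empty interior by Lemma \ref{sep3cycle}. Hence $R = W$ is a broken wheel with $Q$ in its outer face boundary, as desired.

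The main obstacle I expect is the bookkeeping in the inductive step: correctly identifying the separating path to cut along so that (a) the cut actually reduces the relevant size parameter, (b) the cycle to which we apply Lemmas \ref{sep3cycle}--\ref{sep5cycle} has an interior vertex, and (c) after reattaching the smaller broken wheels obtained inductively, the union is again a broken wheel (as opposed to a more general generalized wheel) — this last point is where chordlessness of $C$ (Lemma \ref{chordless}) and the absence of short separating cycles are used crucially, since they prevent the hub $u$ from being "split" and prevent extra chords of $D$ other than through $u$.
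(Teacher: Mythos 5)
Your plan tries to establish $u$'s adjacency to every vertex of $Q$ by a purely structural induction, but this cannot work: the configuration where $\Int[D]$ (for $D = Q + v_iuv_j$) is a \emph{wheel} with a single hub $h \in \Int(D)$ adjacent to every vertex of $D$ (including $u$) is perfectly consistent with $2$-connectedness, chordlessness of $C$, and Lemmas \ref{sep3cycle}--\ref{sep6cycle}. In that configuration $u$ has only $v_i$, $v_j$, and $h$ as neighbours inside $D$, so $u$ is emphatically not adjacent to the internal vertices of $Q$, and none of the short-cycle lemmas you invoke are triggered (every triangle $h x_\ell x_{\ell+1}$ and every triangle through $u$ has empty interior already). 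Thus the inductive claim you are aiming for \textemdash\ ``$u$ is adjacent to all of $Q$'' \textemdash\ is simply not a consequence of the structural constraints alone; it genuinely uses the list/colouring constraints via minimality of $K$.

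The paper's proof is a colouring argument, not a structural one. After splitting along $v_iuv_j$ into $G_1 \supseteq S$ and $G_2 = \Int[D]$ and colouring $G_1$ by minimality, it observes that either the restricted canvas $(G_2,L',v_iuv_j,A\cap V(G_2))$ is unexceptional (extend, contradiction), or $G_2$ is itself a generalized wheel $W$ with principal path $v_iuv_j$; by the negated conclusion $W$ is not a broken wheel. At this point the essential ingredient you are missing is Thomassen's Lemma \ref{thomassenbadcol}: for a non-broken generalized wheel, at most one colouring of the principal path fails to extend. Because $\g(u)=3$ (by Lemma \ref{intg4s}) and $u \in V(\Int(C))$, we have $|L(u)| \geq 5$, so one may delete the one bad colour from $L(u)$, still have $|L^\star(u)| \geq 4$, recolour $G_1$ with this reduced list by minimality, and then extend to $G_2$ by Lemma \ref{thomassenbadcol}. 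Your proposal never invokes Lemma \ref{thomassenbadcol} (or Lemma \ref{bad-colouring}), never uses the fact $|L(u)| \geq 5$, and so has no mechanism to exclude the wheel-hub case. The phrase ``the standard Thomassen-style reduction'' papers over exactly this gap: the reduction you would need here is precisely the one in the paper, and it is a list-colouring argument, not an induction on interior vertices using Lemmas \ref{sep3cycle}--\ref{sep5cycle}.

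One small additional issue: even the final step of your plan (``every vertex of $\Int(D)$ lies inside one of the triangles $v_iux_1,\dots,x_tuv_j$'') presupposes you have already established all the spokes $ux_\ell$; you should be aware that this conclusion is what is being proved, so it cannot be assumed while arguing that the interior is empty.
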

\begin{proof}
Suppose not. Note that by Lemma \ref{intg4s}, $v_i$, $u$, and $v_j$ all have girth three. The path $v_iuv_j$ separates $G$ into two subgraphs $G_1$ and $G_2$. (By Observation \ref{kgeq3}, $S$ contains at least three vertices and so it follows that $V(G_i) \setminus V(G_{3-i}) \neq \emptyset $ for each $i \in \{1,2\}$.) Since neither $v_i$ nor $v_j$ is an internal vertex of the path $S$, we may assume without loss of generality that $S \subseteq G_1$.  By Observation \ref{subcanvas},  $K[G_1]$ is an unexceptional canvas. By the minimality of $K$, it follows that $G_1$ admits an $L$-colouring $\phi$. Let $L'$ be a list assignment for $G_2$ obtained from $L$ by setting $L'(v) = \{\phi(v)\}$ for $v \in \{v_i, u, v_j\}$, and $L'(v) = L(v)$ otherwise. Then $K_2 = (G_2, L', v_iuv_j, A \cap V(G_2))$ is a canvas. Note that since $v_iuv_j$ has only three vertices, it is an acceptable path. If $K_2$ is unexceptional, then $G_2$ admits an $L'$-colouring $\phi'$; but then $\phi \cup \phi'$ forms an $L$-colouring of $G$, a contradiction. Thus $K_2$ is exceptional, and since $v_iuv_j$ has only three vertices, $K_2$ is an exceptional canvas of type (iii).

 By assumption, $G_2$ does not contain a subgraph $W$ that is a broken wheel with principal path $v_iuv_j$ such that the vertices on the outer cycle of $W$ are on the outer cycle of $G_2$. Since $K_2$ is an exceptional canvas of type (iii), it follows that $G_2$ contains a subgraph $W$ that is a generalized wheel but not a broken wheel such that the vertices on the outer cycle of $W$ are on the outer cycle of $G_2$. By Lemma \ref{chordless}, every chord of the outer cycle of $G_2$ has $u$ as an endpoint. Thus all edges on the outer cycle of $W$ are on the outer cycle of $G_2$. By Lemma \ref{sep3cycle}, every triangle in $G_2$ has no vertices in its interior. It follows that $W = G_2$, and so that $G_2$ is a near-triangulation. By Theorem \ref{thomassenbadcol}, there is at most one colouring of $v_iuv_j$, say $\varphi$, that does not extend to a colouring of $G_2$. Let $L^\star(u) = L(u) \setminus \varphi(u)$, and $L^\star(v) = L(v)$ for all $v \in V(G) \setminus \{u\}$. Note that since $\g(u) = 3$ and $u \in V(\Int(C))$, we have that $|L(u)| \geq 5$. It follows that $|L^\star(u)| \geq 4$. 

Since $K[G_1]$ is unexceptional and $|L^\star(u)| \geq 4$, it follows that $(G_1, L^\star, S, A \cap V(G_1))$ is an unexceptional canvas. By the minimality of $K$, we have that $G_1$ admits an $L^\star$-colouring $\varphi^\star$. Let $L^{\star \star}$ be the list assignment for $G_2$ obtained by setting $L^{\star \star}(v) = \{\varphi^\star(v)\}$ for all $v \in \{v_i, u, v_j\}$ and $L^{\star \star}(v) = L(v)$ otherwise. By Theorem \ref{thomassenbadcol} and the fact that $\varphi^\star(u) \neq \varphi(u)$, we have that $(G_2, L^{\star \star}, v_iuv_j, A \cap V(G_2))$ admits an $L^{\star \star}$-colouring $\varphi^{\star \star}$. As $\varphi^\star \cup \varphi^{\star \star}$ is an $L$-colouring of $G$, this is a contradiction.

\end{proof}

The following lemma provides some insight into the structure surrounding the broken wheels described by Lemma \ref{nogenwheels}. It will be useful in bounding the size of certain broken wheels in $G$.
\begin{lemma}\label{closeafan}
Suppose $W \subseteq G$ is a broken wheel with outer cycle $ww_1w_2 \dots w_tw$ and principal path $w_1ww_t$ such that $w \in V(\Int(C))$; $V(W)\setminus \{w\} \subseteq (V(C) \setminus V(S)) \cup \{v_1, v_k\}$; and $t \geq 3$. Let $1 \leq j \leq t-2$, and let $G'$ be the graph obtained from $G$ by identifying $w_j$ and $w_{j+2}$ to a new vertex $z$ and deleting $w_{j+1}$. Let $x \in V(G')\setminus \{z\}$. The following both hold. 
\begin{enumerate}
    \item If $\g_G(x) \geq 5$, then $\g_{G'}(x) \geq 5$.
    \item If $\g_G(x) = 4$, then $\g_{G'}(x) \geq 4 $.
\end{enumerate}
\end{lemma}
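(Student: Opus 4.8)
The plan is to argue by contradiction in each case, analysing how a short cycle through $x$ in $G'$ would pull back to a structure in $G$. The key observation is that $G'$ differs from $G$ only inside the broken wheel $W$: the identification of $w_j$ with $w_{j+2}$ and deletion of $w_{j+1}$ replaces the three consecutive spokes $ww_j, ww_{j+1}, ww_{j+2}$ and the rim edges $w_jw_{j+1}, w_{j+1}w_{j+2}$ by a single spoke $wz$ together with rim edges to $z$ from $w_{j-1}$ and $w_{j+3}$ (when these exist). So a cycle $D'$ in $G'$ that is not already a cycle of $G$ must use the new vertex $z$, and hence must use at least one edge incident to $z$, i.e. one of $wz$, $w_{j-1}z$, or $w_{j+3}z$. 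First I would set up notation: let $D'$ be a cycle in $G'$ through $x$ with $|D'| = \g_{G'}(x)$, and assume for contradiction $|D'| < 5$ in part 1 (resp. $< 4$ in part 2).

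The main case split is on which neighbours of $z$ the cycle $D'$ uses. If $D'$ avoids $z$ altogether, then $D'$ is literally a cycle of $G$ through $x$ of the same length, contradicting $\g_G(x) \geq 5$ (resp. $\geq 4$). If $D'$ passes through $z$, I would ``expand'' $z$ back: an edge $wz$ of $D'$ corresponds to either $ww_j$ or $ww_{j+2}$ in $G$; an edge $w_{j-1}z$ corresponds to $w_{j-1}w_j$; an edge $w_{j+3}z$ corresponds to $w_{j+3}w_{j+2}$. Walking around $D'$ and replacing $z$ by the appropriate endpoint ($w_j$ or $w_{j+2}$) gives either a closed walk or a walk in $G$ between $w_j$ and $w_{j+2}$; in the latter case I close it up using the length-$2$ path $w_j w_{j+1} w_{j+2}$ (or, if I prefer to keep things on the rim, the length-$2$ path $w_j w w_{j+2}$, noting $w\in \Int(C)$ so neither $w_j w_{j+2}$ is an edge — indeed by Lemma~\ref{chordless} the outer cycle of $G$ is chordless, so $w_jw_{j+2}\notin E(G)$). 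The upshot is that from a cycle $D'$ of length $\ell$ in $G'$ through $x$ I obtain a cycle of $G$ through $x$ of length at most $\ell + 2$ — \emph{but} that bound is too weak by itself; I need the sharper fact that at most one vertex of $D'$ is $z$ (a cycle visits each vertex once), so at most one ``$+1$'' is incurred, giving a cycle of $G$ through $x$ of length at most $\ell+1$. To push from $+1$ down to the needed bound I would observe that the two ``wings'' of the expansion — a $wz$ edge versus a rim edge at $z$ — behave differently: if $D'$ uses $wz$ then expansion gives a closed walk in $G$ of the \emph{same} length $\ell$ through $x$ (replace $z$ by $w_j$, say, and the edge $wz$ by $ww_j$), immediately contradicting $\g_G(x)$; if $D'$ uses two rim edges $w_{j-1}z, w_{j+3}z$ at $z$ (and not $wz$), expansion produces a walk from $w_{j-1}$ to $w_{j+3}$ in $G$ which I close via $w_{j-1}w_j w_{j+1} w_{j+2} w_{j+3}$ of length $4$ — giving a cycle through $x$ of length at most $\ell - 2 + 4 = \ell+2$, which again is too weak, so instead I close via the \emph{spoke} route $w_{j-1} w w_{j+3}$ of length $2$ to get $\le \ell$; the point is that $w$ is reachable in two steps from both $w_{j-1}$ and $w_{j+3}$ via spokes, so the detour costs only $0$ net once we account for the two deleted rim edges $w_{j-1}z,w_{j+3}z$ becoming the three-edge fan $w_{j-1}w, ww_{j+3}$ minus them — let me just say: in every case expansion yields a cycle (or, after shortcutting repetitions, a cycle) of $G$ through $x$ of length $\le \ell$, contradicting $\g_G(x)\ge 5$ (resp.\ $\ge 4$). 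This handles both parts uniformly.

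The one genuinely delicate point — and the step I expect to be the main obstacle — is the bookkeeping in the rim-edge case to confirm that the expanded closed walk in $G$ is still a \emph{cycle} through $x$ (or contains one through $x$) of the claimed length, rather than something that shortcuts past $x$ or degenerates. Specifically I must check: (a) $x \ne z$ by hypothesis, and $x\ne w_{j+1}$ since $w_{j+1}\notin V(G')$, so $x$ genuinely survives the expansion and lies on the resulting walk; (b) when I splice in a length-$2$ or length-$4$ fan path, the new internal vertices ($w, w_{j+1}$) are distinct from the rest of $D'$ — for $w$ this holds because $w\notin V(C)$ while $D'\setminus\{z\}\subseteq V(G)\cap(V(C)\cup\ldots)$... actually $D'$ may pass through interior vertices, so I instead argue that if $w\in V(D')$ the original $D'$ already used a spoke, reducing to the easy case; (c) edge-multiplicities don't collapse the walk below length $5$. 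I will dispose of these by noting that whenever a repetition occurs we can extract a strictly shorter cycle through $x$, and a strictly shorter cycle through $x$ in $G$ contradicts $\g_G(x)$ even more strongly — so without loss of generality the expanded object is a simple cycle of the stated length. Once that is nailed down, both parts follow immediately.
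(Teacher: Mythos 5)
Your proposal gets the easy cases right (a short cycle of $G'$ through $x$ avoiding $z$, or using the edge $wz$, expands to a cycle of $G$ of the same length, exactly as in the paper's first case), but the central claim that carries the hard case --- ``in every case expansion yields a cycle of $G$ through $x$ of length $\le \ell$'' --- is false in the boundary cases $j=1$ and $j+2=t$, and these are not fringe cases: the lemma is later applied with $t=3$, $j=1$ (e.g.\ in Claim \ref{canvas} of Lemma \ref{notnone}), where both boundaries occur simultaneously. Your ``net $0$'' spoke substitution $w_{j-1}zw_{j+3} \mapsto w_{j-1}ww_{j+3}$ needs both $D'$-neighbours of $z$ to be rim vertices adjacent to the hub $w$; that is automatic only for $2\le j\le t-3$, where the identified vertices are interior rim vertices whose only $G$-neighbours are $w_{j\pm1}$ and $w$. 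When $j=1$ (resp.\ $j+2=t$), the vertex $z$ inherits all $G$-neighbours of $w_1$ (resp.\ $w_t$), which may include vertices outside $W$ not adjacent to $w$; then the cheapest reconnection in $G$ costs $+1$ or $+2$ edges, and a cycle of length $\ell+1$ or $\ell+2$ yields no contradiction with $\g_G(x)\ge 5$ (resp.\ $\g_G(x)=4$) when $\ell=4$ (resp.\ $\ell=3$). Concretely, with $t=3$, $j=1$, a triangle $xcz$ in $G'$ with $x$ adjacent to $w_1$ and $c$ adjacent to $w_3$ in $G$ expands only to a $5$-cycle such as $xcw_3w_2w_1x$, which is perfectly consistent with $\g_G(x)=4$; pure length bookkeeping cannot finish here.

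The missing idea is exactly what the paper uses: in the case $w\notin V(H')$ it replaces $z$ by the path $w_jw_{j+1}w_{j+2}$, accepts the $+2$, and observes that $w$ then lies in $\Int(H)$ of the resulting cycle $H$ (triangles of $W$ have empty interiors by Lemma \ref{sep3cycle}); the possibilities $|V(H)|=5$ and $|V(H)|=6$ are then eliminated by the separating-cycle Lemmas \ref{sep5cycle} and \ref{sep6cycle}, since $x\in V(H)$ has girth at least $4$, respectively at least $5$. Your proposal never invokes these lemmas, and without them the boundary cases cannot be closed. A smaller but real flaw: your claim that ``if $w\in V(D')$ the original $D'$ already used a spoke, reducing to the easy case'' does not follow --- $w$ being a vertex of $D'$ does not force the edge $wz$ to lie on $D'$; the correct fix is the paper's observation that a minimum cycle through $x$ is induced, so $w,z\in V(D')$ together with $wz\in E(G')$ forces $wz\in E(D')$. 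With that repair your treatment of the interior case $2\le j\le t-3$ is a valid (and slightly slicker) shortcut, but as written the proof does not establish the lemma in the cases where it is actually needed.
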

\begin{proof}
Suppose not. Then $\g_G(x) \geq 4$, and $\g_{G'}(x) < \g_G(x)$. Moreover, $x \not \in V(W)$, since every vertex in a broken wheel has girth three. By Lemma \ref{chordless}, the outer cycle $C$ of $G$ is chordless: thus all edges on the outer cycle of $W$ other than $ww_1$ and $ww_t$ are in $E(C)$. Moreover, every triangle in $G$ \textemdash and so in particular, every triangle in $W$ \textemdash has no vertex in its interior by Lemma \ref{sep3cycle}.  Let $H'$ be a smallest cycle in $G'$ containing $x$. Note that $H'$ is induced. Since $\g_{G'}(x) < \g_G(x)$, it follows that $z \in V(H')$. First suppose that $w \in V(H')$. Then since $z \in V(H')$ and $H'$ is an induced cycle it follows that $zw \in E(H')$. Let $w' \neq w$ be a neighbour of $z$ in $H'$. Since $z$ is the identification of $w_j$ and $w_{j+2}$, it follows that $G$ contains a cycle $P$ obtained from $H'$ by replacing the path $wzw'$ with one of $ww_jw'$ and $ww_{j+2}w'$. This is a contradiction, since then $|V(P)| = |V(H')|$, but by assumption $|V(H')| < \g_G(x)$. 

Thus we may assume that $w \not \in V(H')$. Since every triangle in $W$ has no vertex in its interior by Lemma \ref{sep3cycle}, it follows that $\{w_1, \dots, w_t\} \setminus \{w_j, w_{j+1}, w_{j+2}\} \subseteq V(H')$. Let $H$ be the cycle in $G$ obtained from $H'$ by replacing $z$ by the path $w_{j}w_{j+1}w_{j+2}$. Since every triangle in $W$ has no vertices in its interior and $w \not \in V(H)$, it follows that $w \in V(\Int(H))$. Note that $|V(H')| \geq 3$, and so $|V(H)|\geq 5$. If $|V(H)| = 5$, this contradicts Lemma \ref{sep5cycle} since $\g_G(x) \geq 4$ by assumption and $x \in V(H)$. If $|V(H)| \geq 7$, then $|V(H')|\geq 5$ and so $\g_{G'}(x) \geq 5$, a contradiction. Thus we may assume that $|V(H)| = 6$, and so that $|V(H')| = 4$. It follows that $\g_G(x) \geq 5$. But then this contradicts Lemma \ref{sep6cycle}, since $x \in V(H)$.

\end{proof}

We are now equipped to bound the size of certain broken wheels in $G$.

\begin{lemma}\label{nobigfans}
Suppose that $W \subseteq G$ is a broken wheel with outer cycle $ww_1w_2 \cdots w_tw$ and principal path $w_1ww_t$ such that $w \in V(\Int(C))$; $V(W) \setminus \{w\} \subseteq V(C) \setminus V(S)$; and $t \geq 3$. There does not exist an index $1 \leq j \leq t-2$ such that $L(w_j) = L(w_{j+2})$.
\end{lemma}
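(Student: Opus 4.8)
The plan is to argue by contradiction in the style of the earlier separating-path lemmas. Suppose such an index $j$ exists, so $L(w_j) = L(w_{j+2})$. Form the graph $G'$ from $G$ by identifying $w_j$ and $w_{j+2}$ into a single vertex $z$ and deleting $w_{j+1}$; since $\deg_W(w_{j+1}) = 2$ (the vertices $w_1,\dots,w_t$ induce a path in $C$, with only $w$ adjacent to interior indices, and $C$ is chordless by Lemma \ref{chordless}), deleting $w_{j+1}$ creates no parallel edges, and identifying $w_j$ with $w_{j+2}$ is legitimate because both are neighbours of $w$ on the outer cycle $w w_1 \cdots w_t w$ and the triangles $w w_j w_{j+1}$, $w w_{j+1} w_{j+2}$ bound faces by Lemma \ref{sep3cycle}. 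Thus $G'$ is a plane graph with fewer vertices than $G$, inheriting the embedding. Set $L'(z) = L(w_j)\, (= L(w_{j+2}))$ and $L'(x) = L(x)$ for all other $x$; note $S$ survives intact in $G'$ (none of $w_j, w_{j+1}, w_{j+2}$ lies in $S$ since $V(W)\setminus\{w\}\subseteq V(C)\setminus V(S)$), and $A$ survives since $A$ is a set of outer-face vertices of girth $\ge 5$ none of which equals $w_j,w_{j+1},w_{j+2}$.

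Next I would check that $(G', L', S, A')$ is a canvas for a suitable set $A'$, and that it is unexceptional, so that the minimality of $K$ yields an $L'$-colouring $\phi'$ of $G'$. The girth bookkeeping is exactly Lemma \ref{closeafan}: every vertex $x \ne z$ with $\g_G(x)\ge 5$ has $\g_{G'}(x)\ge 5$, and every $x\ne z$ with $\g_G(x)=4$ has $\g_{G'}(x)\ge 4$. Hence every interior vertex of $G'$ still has a large enough list for the canvas conditions (lists only shrink on $z$, which is on the outer face), and we may take $A' = A$, which is still an independent set of outer-face vertices of girth $\ge 5$ (the only new outer-face vertex is $z$, which has girth $3$, being in a triangle with $w$, so $z\notin A'$; and $z$ is not adjacent to any vertex of $A'$ that wasn't already adjacent to $w_j$ or $w_{j+2}$, but those had girth $\ge 5$ neighbours only through... actually I must verify $A'$ stays independent, but since $A\setminus V(S)$ was independent in $G$ and no identification merges two $A$-vertices, it remains independent). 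For unexceptionality: $S$ is unchanged, so if $K'$ were exceptional it would be of the same type as a potential exception for $K$; but $K$ is unexceptional, and the only worry is that the identification creates a new generalized-wheel configuration. Since every vertex of a generalized wheel has girth $3$ and the relevant principal-path vertices of $S$ are unchanged, and $z$ has girth $3$ in $G'$, one argues as in Observation \ref{subcanvas} that no new exception of type (ii) or (iii) arises — here I would lean on the chordlessness of $C$ (Lemma \ref{chordless}) to pin down that any such wheel would have to already exist around $S$ in $G$. Finally, given $\phi'$, recolour $G$: keep $\phi'$ on $V(G)\setminus\{w_j, w_{j+1}, w_{j+2}\}$, set $\phi(w_j) = \phi(w_{j+2}) = \phi'(z)$, which is valid since $\phi'(z)\in L(w_j)=L(w_{j+2})$ and the common neighbour $w$ receives $\phi'(w)\ne\phi'(z)$ (and $w_{j-1}, w_{j+3}$, if present, are also coloured consistently); then extend to $w_{j+1}$, whose only neighbours are $w, w_j, w_{j+2}$, the latter two now sharing a colour, so two colours are forbidden and $|L(w_{j+1})|\ge 3$ suffices. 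This gives an $L$-colouring of $G$, contradicting that $K$ is a counterexample.

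The main obstacle I anticipate is the verification that $(G', L', S, A')$ is genuinely an unexceptional canvas — specifically, ruling out that the identification $w_j \sim w_{j+2}$ creates a generalized wheel with principal path $S$ (a type (iii) exception) or the more elaborate type (ii) configuration. The girth estimates from Lemma \ref{closeafan} handle the list-size conditions cleanly, and the independence of $A'$ is routine, but the exceptional-canvas check requires knowing that any generalized wheel in $G'$ attached to $S$ would pull back to a forbidden structure in $G$; this is where chordlessness of $C$ and the absence of separating triangles (Lemma \ref{sep3cycle}) do the real work, and it is the step I would write out most carefully. A secondary point to handle with care is confirming that $w_j$ and $w_{j+2}$ are genuinely nonadjacent in $G$ before identifying (otherwise the identification creates a loop) — this follows because $w_j w_{j+2}$ would be a chord of $C$ unless $t$ is small, and one checks the boundary cases $t = 3$ separately using Lemma \ref{chordless}.
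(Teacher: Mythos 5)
Your reduction is exactly the paper's: identify $w_j$ and $w_{j+2}$ into a new vertex $z$, delete $w_{j+1}$, keep the lists, invoke Lemma \ref{closeafan} for the girth bookkeeping, and recover an $L$-colouring of $G$ by giving $w_j,w_{j+2}$ the colour of $z$ and then colouring $w_{j+1}$ from its list of size three (Observation \ref{listsizes}); the non-adjacency of $w_j$ and $w_{j+2}$ and the fact that $N_G(w_{j+1})=\{w,w_j,w_{j+2}\}$ come from Lemmas \ref{chordless} and \ref{sep3cycle}, as you indicate (a minor slip: $\deg_W(w_{j+1})=3$, not $2$, since $w_{j+1}$ is adjacent to the hub $w$; what you actually need is the statement about $N_G(w_{j+1})$). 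All of that matches the paper.

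The gap is the step you explicitly defer: showing that $(G',L,S,A)$ is unexceptional. This is the substantive half of the paper's proof, and the route you sketch would not work as stated. Observation \ref{subcanvas} concerns subcanvases $K[H]$ with $H\subseteq G$; since $G'$ arises by an identification it is not a subgraph of $G$, so that observation gives nothing here. More importantly, one cannot argue that the identification creates no new type (iii) configuration, nor that such a wheel ``would have to already exist around $S$ in $G$'': the identification genuinely can create a generalized wheel with principal path $S$ in $G'$ that is absent from $G$. The paper instead supposes $K'=(G',L,S,A)$ is exceptional and derives a contradiction with the unexceptionality of $K$: types (i) and (ii) are excluded because $C$ is chordless (and $z$ has girth three, hence $z\notin A$); for type (iii), chordlessness forces the outer cycle of the generalized wheel $W'$ to be the (chordless) outer cycle of $G'$, so $W'$ is a wheel; triangles of $G'$ correspond to triangles of $G$, so Lemma \ref{sep3cycle} yields $W'=G'$ with hub $w$; undoing the identification then exhibits $G$ itself as a wheel with principal path $S$ all of whose other outer-cycle vertices, including $w_{j+1}$, have lists of size three by Observation \ref{listsizes}, i.e.\ $K$ is an exceptional canvas of type (iii) --- a contradiction. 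So the correct argument is not that the bad structure must pre-exist in $G$, but that its appearance after the identification forces $K$ itself to be exceptional; this is the piece your proposal is missing.
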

\begin{proof}
Suppose not. Let $j$ be an index with $1 \leq j \leq t-2$ and $L(w_j) = L(w_{j+2})$.  By Lemma \ref{chordless}, $C$ is chordless, and so every every edge on the outer cycle of $W$ other than $ww_1$ and $ww_t$ is in $E(C)$. Moreover, note that every triangle in $W$ has no vertices in its interior by Lemma \ref{sep3cycle}.  Let $G'$ be the graph obtained from $G$ by identifying $w_j$ and $w_{j+2}$ to a new vertex $z$ and deleting $w_{j+1}$. Note that $G'$ is planar, and inherits the embedding of $G$. Set $L(z) = L(w_j)$. Since $w_{j}$ is in a broken wheel in $G$,  $\g_G(w_j) = 3$. Since $w_j \not \in V(S)$, it follows from Observation \ref{listsizes} that $|L(w_j)| = |L(z)| = 3$. (Similarly, $|L(w_{j+1})| = |L(w_{j+2})| = 3$.) By Lemma \ref{closeafan}, we have that for all $x \in V(G') \setminus \{z\}$, if $\g_G(x) \geq 5$, then $\g_{G'}(x) \geq 5$; and that if $\g_G(x) = 4$, then $\g_{G'}(x) \geq 4 $.

It follows from this that $K' = (G', L, S, A)$ is a canvas. Moreover, it follows from Lemma \ref{closeafan} that $S$ is an acceptable path in $G'$. If $K'$ is unexceptional, then since $K$ is a minimum counterexample it follows that $G'$ admits an $L$-colouring $\phi$. This is a contradiction, since $\phi$ extends to an $L$-colouring of $G$ by setting $\phi(w_j) = \phi(w_{j+2}) = \phi(z)$, and choosing $\phi(w_{j+1}) \in L(w_{j+1}) \setminus \{\phi(w_j), \phi(w)\}$. (Since $\g(w_{j+1}) = 3$, it follows that $|L(w_{j+1})| = 3$ and so that $w_{j+1}$ receives a colour.)  

Thus $K'$ is an exceptional canvas. Since $C$ is chordless by Lemma \ref{chordless}, it follows that $K'$ is not an exceptional canvas of type (i) or (ii). Thus we may assume that $K'$ is an exceptional canvas of type (iii), and so that $G'$ contains a generalized wheel $W'$ with principal path $S$ such that the vertices on the outer cycle of $W'$ are on the outer cycle of $G'$. Again because $C$ is chordless it follows that the outer cycle of $W'$ is the outer cycle of $G'$, and that the outer cycle of $G'$ is also chordless. Since the outer cycle of every generalized wheel that is not a wheel or a triangle has a chord, we have that $W'$ is a wheel. It follows that every triangle in $G'$ corresponds to a triangle in $G$ (replacing $z$ by $w_j$ or $w_{j+2}$ where appropriate). By Lemma \ref{sep3cycle}, we have that $W' = G'$; and since $w \in V(\Int(C))$ and $G'$ is a wheel, it follows further that $w$ is the only vertex in $W'$ not in the outer cycle of $G'$. But then $G$ too is a wheel with principal path $S$, and since $|L(w_{j+1})| = 3$ by Observation \ref{listsizes}, it follows that $K$ is an exceptional canvas of type (iii) \textemdash a contradiction.
\end{proof}

To close this subsection, we give one last lemma: it restricts still further the set of broken wheel subgraphs in $G$. It is used in Section 3 to argue concisely that upon performing our main reductions (colouring and deleting a subset of the vertices of $G$, and adjusting lists where appropriate), what remains is not an exceptional canvas of type (iii).
\begin{lemma}\label{deg3forv4}
If $S$ has length two and contains only vertices of girth three and $|V(C)| \geq 5$, then $V(\Int(C))$ does not contain a vertex $w$ adjacent to $v_3$, $v_{4}$, and $v_{5}$.
\end{lemma}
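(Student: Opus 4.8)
The plan is to argue by contradiction, supposing that there is a vertex $w \in V(\Int(C))$ adjacent to $v_3$, $v_4$, and $v_5$, and to derive a contradiction by a deletion/recolouring argument in the spirit of Lemmas \ref{sep3cycle}--\ref{sep6cycle} and \ref{nogenwheels}. First I would record the basic structural consequences of the hypothesis. Since $S = v_1v_2v_3$ has length two with $\g(v_1) = \g(v_2) = \g(v_3) = 3$, and $|V(C)| \geq 5$, Lemma \ref{chordless} tells us $C$ is chordless, so $v_3v_4$ and $v_4v_5$ are edges of $C$ and $w$ lies strictly inside $C$. The triangles $wv_3v_4$ and $wv_4v_5$ then have empty interior by Lemma \ref{sep3cycle}, so $v_4$ has degree exactly $3$ in $G$ with $N(v_4) = \{v_3, v_5, w\}$. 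Note also that $\g_G(w) = 3$ so $|L(w)| \geq 5$, and since $v_4 \notin V(S)$ (as $k = 3$ means only $v_1, v_2, v_3$ are precoloured) and $\g(v_4) = 3$, we have $|L(v_4)| = 5$ by the list-size constraints of a canvas together with Observation \ref{listsizes} (in fact $|L(v_4)|\ge 5$ suffices).

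Next I would set up the reduction. Consider the path $Q = v_3 w v_5$, which separates $G$ into two subgraphs: $G_1$ containing $S$ (hence $v_1, v_2, v_3, w, v_5$ and the part of $C$ from $v_5$ around through $v_1$), and $G_2$ containing $v_4$. By Lemma \ref{nogenwheels} applied with $u = w$ and the vertices $v_3, v_5$ on $C$ (neither internal to $S$), the side of $G$ lying between $v_3$ and $v_5$ and containing the path $v_3v_4v_5$ must be a broken wheel with principal path $v_3 w v_5$; but since $v_4$ is the only internal vertex of that path-region (the triangles have empty interiors), that broken-wheel side is precisely the $4$-cycle $v_3 v_4 v_5 w v_3$, i.e.\ $G_2 = \Int[v_3v_4v_5wv_3]$ which is just this $4$-cycle. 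So the whole situation is: $G$ consists of $G_1$, together with the vertex $v_4$ attached to the triangle on $v_3, w, v_5$. Now colour $G_1$: by Observation \ref{subcanvas} $K[G_1]$ is unexceptional (it is a subcanvas), and by minimality of $K$ it admits an $L$-colouring $\phi$. Then I want to extend $\phi$ to $v_4$: the only obstruction is that $\phi(v_3), \phi(v_5), \phi(w)$ might use up colours of $L(v_4)$, but $|L(v_4)| \geq 5 > 3$, so we can always pick $\phi(v_4) \in L(v_4) \setminus \{\phi(v_3), \phi(v_5), \phi(w)\}$, yielding an $L$-colouring of $G$ --- contradicting that $K$ is a counterexample.

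The step I expect to need the most care is justifying that $G_2$ really reduces to the single vertex $v_4$ sitting on the triangle $v_3 w v_5$, i.e.\ that there is nothing else on the $v_4$-side of $Q = v_3 w v_5$. This is where Lemma \ref{nogenwheels} (forcing a broken wheel with principal path $v_3 w v_5$ on that side) must be combined with the emptiness of triangle interiors (Lemma \ref{sep3cycle}): the broken wheel's spokes all emanate from $w$, its rim lies on $C$, and since the only rim vertices strictly between $v_3$ and $v_5$ along the relevant arc is $v_4$, the broken wheel is exactly the $4$-cycle $v_3v_4v_5w$. One subtlety is ensuring $V(G_i)\setminus V(G_{3-i})\neq\emptyset$ so that $Q$ genuinely separates $G$: on the $G_2$-side this holds because $v_4\notin V(G_1)$, and on the $G_1$-side it holds because $|V(C)|\geq 5$ forces at least one vertex of $C$ other than $v_3, v_4, v_5, w$. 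Once this structural reduction is in hand, the colouring extension is immediate from the large list of $v_4$, and the contradiction follows.
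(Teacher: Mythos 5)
Your proof has a genuine gap, and it stems from a misreading of the list-size constraints. You claim $|L(v_4)| \geq 5$ (and that "Observation~\ref{listsizes}" supports this), but $v_4$ lies on the outer cycle $C$, so the canvas conditions only give $|L(v_4)| \geq 3$, and Observation~\ref{listsizes} in fact asserts $|L(v_4)| = 3$ exactly (since $v_4 \in V(C)\setminus(V(S)\cup A)$). The constraints $|L(v)| \geq 4$ and $|L(v)| \geq 5$ apply only to vertices \emph{not} on $V(C)$. Consequently, your final extension step fails: after colouring $G_1 = G - v_4$, the vertex $v_4$ has all three of its neighbours $v_3, v_5, w$ coloured, and $|L(v_4)| = 3$ leaves no guaranteed free colour. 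The structural reduction to $N(v_4) = \{v_3, v_5, w\}$ is correct (and in fact the paper gets it immediately from Lemma~\ref{sep3cycle}, without needing Lemma~\ref{nogenwheels}), but the colouring extension is where the argument collapses.

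The paper's proof works around this precisely by not leaving $w$ unconstrained. It first proves $L(v_3) \subset L(v_4)$ (via a delete-$v_3$ argument), normalises $L(v_3) = \{1\}$ and $L(v_4) = \{1,2,3\}$, then removes $\{2,3\}$ from $L(w)$ \emph{before} deleting $v_4$. Since $w \in V(\Int(C))$ with $\g(w)=3$ forces $|L(w)| \geq 5$, the trimmed list still has size at least~$3$, so $(G - v_4, L', S, A)$ remains a canvas. Now any colouring of $G - v_4$ has $\phi(w) \notin \{2,3\}$ and $\phi(v_3) = 1$, so some colour in $\{2,3\}\setminus\{\phi(v_5)\}$ works for $v_4$. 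The paper then also has to handle the possibility that this modified canvas is exceptional of type~(iii), which leads to a near-triangulation and a contradiction via Theorem~\ref{thomassen3ext}; your proposal does not address this case at all, since under your (incorrect) list-size assumption the extension is automatic. To repair your argument you would need both the colour-trimming trick on $w$ and the exceptional-canvas analysis.
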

\begin{proof}
Suppose not. By Lemma \ref{sep3cycle}, triangles in $G$ have no vertices in their interior, and so $\Int(wv_3v_{4}w) = \emptyset$ and $\Int(wv_{4}v_{5}w) = \emptyset$. Thus $N(v_4) = \{v_3, v_5, w\}$. Since $v_{4}$ has girth three, it is not contained in $A$. Thus by Observation \ref{listsizes}, we have that $|L(v_3)| = 1$ and $|L(v_{4})| = 3$. We claim $L(v_3) \subset L(v_{4})$. To see this, suppose not. Let $G^*$ be the graph obtained from $G$ by deleting $v_3$, and let $L^*$ be a list assignment for $G^*$ obtained by setting $L^*(v) = L(v)$ for all $v \in V(G) \setminus N_G(v_3)$, and $L^*(v) = L(v) \setminus L(v_3)$ for all $v \in N_G(v_3)$. Let $C^*$ be the graph whose vertex- and edge-set are precisely those of the outer face boundary of $G^*$. Let $A^*$ be the set of vertices with lists of size two under $L^*$. We now show that $K^* =(G^*, L^*, S-v_3, A^*)$ is a canvas. To see this, note that $|L^*(v_4)| = |L(v_4)|$ by assumption, and that every vertex $v \in V(C^*) \setminus V(C)$ satisfies $|L^*(v)| \geq |L(v)| -1$. Thus every vertex in $A^*\setminus A$ has $|L(v)| = 3$ and hence has girth at least $5$ in $G$. Thus $A^*\setminus A$ is an independent set. We claim moreover $A^*$ is an independent set. This follows from Lemma \ref{intg4s} and the fact that every vertex in $A^*\setminus A$ is adjacent to $v_3$. Finally, $S-v_3$ has only two vertices; hence $S-v_3$ is an acceptable path in $G^*$ and $K^*$ is unexceptional. By the minimality of $K$ there is an $L$-colouring $\varphi$ of $K^*$ which extends to an $L$-colouring of $K$ by setting $\varphi(v_3) \in L(v_3)$, a contradiction. This proves the claim.

Thus $L(v_3) \subset L(v_4)$. By assumption, $\g(v_4) = 3$ and so $|L(v_4)| = 3$. Without loss of generality we may assume that $L(v_3) = \{1\}$ and that $L(v_{4}) = \{1,2,3\}$. Let $L'$ be the list assignment obtained from $L$ by setting $L'(w) = L(w) \setminus \{2,3\}$, and $L'(v) = L(v)$ for all $v \in V(G)\setminus \{w\}$. Let $C'$ be the graph with vertex- and edge-set precisely those of the outer face boundary walk of $G-v_4$. (Thus $C'$ is the cycle obtained from $C$ by replacing the path $v_3v_4v_5$ by the path $v_3wv_5$.)  
Note that since $w \in V(\Int(C))$ and $\g(w) = 3$, it follows that $|L(w)| \geq 5$ and so that $|L'(w)| \geq 3$. Thus $K' = (G-v_{4}, L', S, A)$ is a canvas.

If $K'$ is unexceptional, then by the minimality of $K$ we have that $G-v_4$ admits an $L'$-colouring $\phi$ which extends to an $L$-colouring of $G$ by choosing $\phi(v_4) \in L(v_4) \setminus \{\phi(v_3), \phi(v_5)\}$, a contradiction.

Thus we may assume that $K'$ is an exceptional canvas. Since $S$ is a path of length two, we have further that $K'$ is an exceptional canvas of type (iii). Thus $G-v_4$ contains a subgraph $W$ that is a generalized wheel with principal path $S$ such that the vertices on the outer cycle of $W$ are on $C'$. Since $V(C')\setminus \{w\} \subset V(C)$ and $K$ is unexceptional, we have that $w \in V(W)$ and moreover that $w$ is in the outer cycle of $W$. Let $u$ be the neighbour of $w$ in the outer cycle $W$ with $u \neq v_3$. Note that $u \in V(C)$. Thus the path $v_3wu$ separates $G$ into two graphs $G_1$ and $G_2$ where without loss of generality $S \subseteq G_1$, and since $C$ is chordless by Lemma \ref{chordless}, the outer cycle of $G_1$ is the outer cycle of $W$. By Lemma \ref{sep3cycle}, every triangle in $W$ (and thus in $G_1$) has no vertex in its interior and so $G_1$ is a near-triangulation. By Lemma \ref{nogenwheels}, $w$ is adjacent to every vertex in the subpath of $C$ with endpoints $v_3$ and $u$ containing no edges of $S$ and hence by Lemma \ref{sep3cycle}, $G_2$ is a near-triangulation. Thus $G$ is a near-triangulation. Since $K$ is not exceptional, we have by Theorem \ref{thomassen3ext} that $G$ admits an $L$-colouring, a contradiction.

\end{proof}
\subsection{Towards a Deletable Path}\label{towardsadeletablepath}
In this subsection, we give several lemmas necessary for establishing the existence of our main reducible configuration, called a \emph{deletable path}.  We end this subsection with the precise definition of the path.  Recall that $S = v_1v_2 \dots v_k$, and that $C = v_1v_2 \cdots v_k \cdots v_qv_1$. Moreover, we have assumed that $G[V(S)]$ has an $L$-colouring. By Lemmas \ref{sep3cycle} and \ref{sep4cycle} and the fact that $k \leq 4$ by definition, it follows that $S$ is not a cycle: that is, $q \neq k$. Lemma \ref{extraverts1} shows that in fact $C$ contains several non-$S$ vertices: in particular, that $q \geq k+3$. In Lemma \ref{extraverts2}, we show that if $v_{k+1} \in A$, then in fact $q \geq k+4$. Lemma \ref{colours} partially describes the list assignment of $v_{k+1}, v_{k+2},$ and $v_{k+3}$. Corollary \ref{notthird} restricts $A \cap \{v_{k+1}, v_{k+2}, v_{k+3}, v_{k+4}\}$ and is used in showing that $G$ contains a deletable path, formally defined in Definition \ref{defdelpath}.

\begin{lemma}\label{extraverts1}
$q \geq k+3$.
\end{lemma}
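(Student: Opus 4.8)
The plan is to suppose for contradiction that $q \le k+2$, and split into the cases $q = k+1$ and $q = k+2$ (recall $q \ne k$ since $S$ is not a cycle, by Lemmas \ref{sep3cycle} and \ref{sep4cycle}). In each case, $V(C) \setminus V(S)$ is small — either one vertex $v_{k+1}$, or two vertices $v_{k+1}, v_{k+2}$ — and since $C$ is chordless by Lemma \ref{chordless}, the structure of $C$ is very restricted. The key will be to colour the one or two non-$S$ vertices directly from their lists (which have size at least $2$ or $3$ by the definition of a canvas and Observation \ref{listsizes}), delete them, and apply minimality of $K$ to the much smaller graph that remains. The main obstacle is that after deleting $v_{k+1}$ (and $v_{k+2}$), the resulting canvas on $G - \{v_{k+1}\}$ (resp.\ $G - \{v_{k+1}, v_{k+2}\}$) could become an exceptional canvas of type (iii) — i.e.\ a generalized wheel whose principal path is $S$ — so the bulk of the work is ruling that out or deriving a contradiction from it using the structural lemmas already established (Lemmas \ref{sep3cycle}, \ref{sep4cycle}, \ref{intg4s}, \ref{nogenwheels}, and Theorem \ref{thomassen3ext}).

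First consider $q = k+1$, so $V(C) = V(S) \cup \{v_{k+1}\}$, and $C = v_1 v_2 \cdots v_k v_{k+1} v_1$. Since $|V(C)| = k+1 \le 5$, every vertex of $C$ lies on a cycle of length at most $5$, and any vertex of $C$ of girth $3$ must lie in a triangle; using chordlessness of $C$ one checks this forces $|V(C)| \le 4$, hence (by the definition of acceptable path and Observation \ref{listsizes}) $A = \emptyset$ and $|L(v_{k+1})| \ge 3$. Since $\Int(C) = \emptyset$ by Lemmas \ref{sep3cycle} and \ref{sep4cycle} applied with $T = C$, we have $G = C$; as $C$ is chordless and $G[V(S)]$ is $L$-coloured, we extend the precolouring of $S$ greedily along the path to $v_{k+1}$ (its one uncoloured neighbourhood constraint on a list of size $\ge 3$ is easily satisfied), a contradiction. (This mirrors the argument in Observation \ref{kgeq3}.)

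Now consider $q = k+2$, so $V(C) \setminus V(S) = \{v_{k+1}, v_{k+2}\}$ and $C = v_1 \cdots v_k v_{k+1} v_{k+2} v_1$. Here I would colour and delete $v_{k+1}$ and $v_{k+2}$: by the minimality of $K$, the canvas $K[G - \Int(C)] = K[C]$ would be handled directly, but more to the point, let $G' = G - \{v_{k+1}, v_{k+2}\}$, remove the (at most one) used colour at each neighbour of $v_{k+1}$ or $v_{k+2}$ in $\Int(C)$, and restrict the lists of $v_1, v_k$ to the precolours. As in Lemmas \ref{sep3cycle}--\ref{sep6cycle}, one verifies that every vertex of the new outer boundary has a large enough list (using $|L(v)| \ge 4$ when $\g(v) = 4$, $|L(v)| \ge 5$ when $\g(v) = 3$, and that such a vertex is adjacent to at most one of $v_{k+1}, v_{k+2}$, the latter by planarity and Lemmas \ref{sep3cycle}--\ref{sep4cycle}), that the new set $A'$ of size-two-list vertices is independent by Lemma \ref{intg4s}, and hence $K' = (G', L', S, A')$ is a canvas with acceptable path $S$. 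If $K'$ is unexceptional we are done by minimality of $K$ (the colouring of $G'$ extends to $v_{k+1}, v_{k+2}$ since each has a list of size $\ge 2$ or $3$ and at most two coloured neighbours on $C$). So $K'$ must be exceptional, hence of type (iii): $G'$ contains a generalized wheel $W'$ with principal path $S$ whose outer cycle lies on the outer boundary of $G'$. Since $C$ is chordless and $\Int(\text{small cycles})$ is empty, $W'$ must be a wheel equal to $G'$ with a single hub $w \in \Int(C)$; one then checks $w$ together with $v_{k+1}, v_{k+2}$ and the edges of $C$ forces $G$ itself to be a near-triangulation (the hub $w$ is adjacent to $v_{k+1}$ and $v_{k+2}$ by Lemma \ref{nogenwheels} and triangles have empty interior by Lemma \ref{sep3cycle}), whence by Theorem \ref{thomassen3ext} (noting $k = |V(S)| \ge 3$ by Observation \ref{kgeq3}, and $K$ unexceptional) $G$ admits an $L$-colouring — a contradiction. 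Therefore $q \ge k+3$.
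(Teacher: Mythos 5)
Your overall strategy (split into $q = k+1$ and $q = k+2$, delete the non-$S$ boundary vertices, and apply minimality) matches the paper's, but there are two genuine gaps, both concentrated at $k = 4$.

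\textbf{The case $q = k+1$.} You write that chordlessness of $C$ forces $|V(C)| \le 4$, but this is not justified: with $q = k+1$ and $k = 4$ you have $|V(C)| = 5$, and Lemmas~\ref{sep3cycle} and \ref{sep4cycle} do not apply to a 5-cycle, so you cannot invoke $\Int(C) = \emptyset$. Your argument therefore simply does not touch the case $|V(C)| = 5$, $\Int(C) \neq \emptyset$. The paper disposes of it by first establishing $\Int(C) \neq \emptyset$ as a preliminary (because $V(C)\setminus V(S)$ must contain a vertex with a list of size~$3$ --- otherwise $K$ is exceptional of type~(i) --- and then $G[V(C)]$ is $L$-colourable by chordlessness, so if $\Int(C)$ were empty $G$ would be colourable), and then applying Lemma~\ref{sep5cycle} to conclude that every vertex of $C$ has girth~$3$, contradicting acceptability of $S$. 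You need something like this preliminary step; without it, the $q = k+1$, $k=4$ subcase is open.

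\textbf{The case $q = k+2$.} After deleting $v_{k+1}, v_{k+2}$ you conclude ``$K'$ must be exceptional, hence of type~(iii).'' That inference is only valid when $k = 3$, because a type~(iii) canvas requires a precoloured path of exactly three vertices, while types~(i) and (ii) require four. When $k = 4$ (so $q = 6$), the exceptional possibilities for $K'$ are types~(i) and (ii), not (iii), and you neither rule them out nor handle them. The paper does this explicitly: it shows no vertex of $A'$ can be adjacent to $v_1$ or $v_4$ (since such a vertex has girth $\geq 5$ and is adjacent to one of $v_5, v_6$, which together with $C$'s edges would create a short cycle through it), so the canvas is not of type~(i) or~(ii), and since $|V(S)|=4$ it is not type~(iii) either. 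Related to this, your final appeal to Theorem~\ref{thomassen3ext} is written with the parenthetical ``$k = |V(S)| \geq 3$,'' but that theorem requires $S$ to be a path of length exactly two, i.e.\ $|V(S)| = 3$; it cannot be applied as stated when $k = 4$. Your $k=3$ subargument (hub $w$ adjacent to $v_1,\dots,v_5$ via Lemma~\ref{nogenwheels}, then $G$ is a near-triangulation and Theorem~\ref{thomassen3ext} finishes) is essentially sound and is a slight repackaging of the paper's reasoning, but the $k=4$ subcase is missing.
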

\begin{proof}
Suppose not. As noted above, $k \neq q$. We claim that $G[V(C)]$ admits an $L$-colouring. To see this, suppose  that $V(C) \setminus V(S)$ contains only vertices with lists of size two. Then by definition $V(C) \setminus V(S) \subseteq A$ \textemdash and since $A$ is an independent set in $G$, it follows that $|V(S)| =4$, and $A$ contains only a single vertex that is adjacent to both $v_1$ and $v_4$. This is a contradiction, since $K$ is not an exceptional canvas of type (i). It follows that $V(C) \setminus V(S)$ contains a vertex with a list of size three. Since $C$ is chordless by Lemma \ref{chordless}, we have then that $G[V(C)]$ is $L$-colourable. Therefore since $G$ does not admit an $L$-colouring, $V(\Int(C)) \neq \emptyset$.

First assume $q = k+1$. Note that $k = 4$ since otherwise one of Lemmas \ref{sep3cycle} and \ref{sep4cycle} gives a contradiction. By Lemma \ref{sep5cycle}, every vertex in $C$ (and so in particular, every vertex in $V(S)$) has girth three. But then $S$ is not an acceptable path, a contradiction. 

Next, assume $q = k+2$. Similar to the previous case, if $k=1$ then $C$ contradicts Lemma \ref{sep3cycle}. If $k = 2$, then $C$ is a 4-cycle with a vertex in its interior, contradicting Lemma \ref{sep4cycle}. If $k=3$, then since $C$ is a 5-cycle with a vertex in its interior, by Lemma \ref{sep5cycle} every vertex in $S$ has girth three. Note that since $C$ is chordless by Lemma \ref{chordless}, $C$ admits an $L$-colouring. Let $\phi$ be an $L$-colouring of $C$, and let $G'$ be the graph obtained from $G$ by deleting $v_4$ and $v_5$. Let $C'$ be the graph whose vertex- and edge-set are precisely those of the outer face boundary of $G'$. Let $L'$ be the list assignment for $G'$ obtained from $L$ by setting $L'(v) = \{\phi(v)\}$ for every $v \in V(S)$ and $L'(v) = L(v) \setminus \{\phi(x): x \in N_G(v) \cap \{v_4, v_5\}\}$ for all $v \in V(G') \setminus V(S)$.

Let $A'$ be the set of vertices in $V(C') \setminus V(S)$ with lists of size at most two under $L'$. Note that every vertex $v \in V(C') \setminus V(S)$ of girth at most four has $|L'(v)| \geq 3$, since vertices in $\Int(C)$ of girth three had lists of size at least five and lost at most two colours, and vertices in $\Int(C)$ of girth four had lists of size at least four and lost at most one colour. Thus the vertices in $A'$ all have girth at least five, and so $A'$ is an independent set, as every vertex in $A'$ is adjacent to either $v_4$ or $v_5$ in $G$. Note furthermore that as $A'$ contains only vertices of girth at least five, no vertex in $A'$ is adjacent to both $v_4$ and $v_5$ in $G$: thus $|L'(v)| = 2$ for all $v \in A'$. Furthermore, $(G', L', S, A')$ does not admit an $L'$-colouring $\phi'$, as otherwise $\phi \cup \phi'$ forms an $L$-colouring of $G$, a contradiction. By the minimality of $K$, it follows that $(G', L', S, A')$ is an exceptional canvas of type (iii): in other words, $G'$ contains a generalized wheel $W$ with principal path $S$ such that the vertices on the outer cycle of $W$ are on the outer face boundary of $G'$, and no vertex in the outer cycle of $W$ has a list of size more than three. By the definition of generalized wheel, all vertices in the outer face boundary of $W$ have girth three. It follows that all vertices in the outer cycle of $W$ that are not in $V(C)$ are in $\Int(C)$, have girth three, and (as they have lists of size at most three under $L'$) are adjacent to both $v_4$ and $v_5$. Since triangles in $G$ have no vertices in their interiors by Lemma \ref{sep3cycle}, it follows that there exists exactly one vertex $u \in V(\Int(C))$ that is adjacent to both $v_4$ and $v_5$, and therefore only $u$ is in the outer cycle of $W$ and not in $V(C)$.  Thus $u$ is adjacent $v_1$, $v_3$, $v_4$ and $v_5$ in $G$. Since 4-cycles in $G$ have no vertices in their interiors by Lemma \ref{sep4cycle} and $\g(v_2) = 3$ since $v_2$ is in a generalized wheel $W$, it follows that $u$ is also adjacent to $v_2$. But then $G$ is a wheel, and $K$ is an exceptional canvas of type (iii) \textemdash a contradiction.

Thus we may assume $k = 4$, and so that $q = 6$. In this case, let $\phi$ be an $L$-colouring of $C$. (Note that $\phi$ exists, since $C$ is chordless by Lemma \ref{chordless} and $V(C)\setminus V(P)$ contains at least one vertex $v$ with $|L(v)| \geq 3$ since $A$ is an independent set.) Let $G'$ be the graph obtained from $G$ by deleting $v_6$ and $v_5$, and let $C'$ be the outer cycle of $G'$. Let $L'$ be the list assignment for $G'$ obtained from $L$ by setting $L'(v) = \{\phi(v)\}$ for every $v \in V(S)$ and $L'(v) = L(v) \setminus \{\phi(x): x \in N_G(v) \cap \{v_5, v_6\}\}$ for all $v \in V(G') \setminus V(S)$.

Note that every vertex $v \in V(C')\setminus V(S)$ of girth at most four has $|L'(v)| \geq 3$: to see this, note that if $v$ has girth three in $G$, then $|L(v)| \geq 5$; similarly, if $v$ has girth four in $G$, then $|L(v)| \geq 4$ and $v$ is adjacent to at most one of $v_5$ and $v_6$ in $G$.

Let $A' \subseteq V(C') \setminus V(S)$ be the set of vertices $v$ with $|L'(v)| \leq 2$. Note that $A'$ contains only vertices of girth at least five by the foregoing paragraph. It follows that $A'$ is an independent set, since every vertex in $A'$ is adjacent to one of $v_5$ and $v_6$ in $G$. Furthermore, since every vertex in $A'$ has girth at least five, $|L'(v)| = 2$ for all $v \in A'$. Thus $(G', L', S, A')$ is a canvas. Finally, no vertex in $A'$ is adjacent to either of $v_1$ or $v_4$, again because every vertex in $A'$ has girth at least five and is adjacent to one of $v_5$ and $v_6$. Thus $(G', L', S, A')$ is not exceptional of type (i) or (ii), and since $|V(S)| = 4$, $(G', L', S, A')$ is not exceptional of type (iii).  By the minimality of $K$, it follows that $G'$ admits an $L'$-colouring $\phi'$. By construction, $\phi' \cup \phi$ is an $L$-colouring of $G$, a contradiction. 
\end{proof}

If $v_{k+1} \in A$, then Lemma \ref{extraverts1} can be strengthened. This is shown below. 

\begin{lemma}\label{extraverts2}
If $v_{k+1} \in A$, then $q \neq k+3$.
\end{lemma}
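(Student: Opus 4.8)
The plan is to assume $v_{k+1}\in A$ and $q=k+3$ and derive a contradiction. By Observation~\ref{kgeq3} and the definition of acceptable path we have $k\in\{3,4\}$, so $q\in\{6,7\}$; moreover $\g(v_{k+1})\ge 5$ and $|L(v_{k+1})|=2$, and since $v_{k+2}$ is adjacent to $v_{k+1}\in A$ and lies outside $V(S)$ it is not in $A$, so $|L(v_{k+2})|\ge 3$ by Observation~\ref{listsizes}. First I would dispose of the case $V(\Int(C))=\emptyset$: then, as $C$ is chordless (Lemma~\ref{chordless}), $G=C$ is a cycle on $q\in\{6,7\}$ vertices, and one extends the given $L$-colouring $\phi$ of $G[V(S)]$ greedily around $C$ in the order $v_{k+1},v_{k+3},v_{k+2}$ — the first two steps forbid one previously used colour each (and have lists of size at least two) and the last forbids two (and has a list of size at least three) — contradicting that $K$ is a counterexample. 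So $V(\Int(C))\ne\emptyset$.

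If $k=3$ then $C=v_1v_2v_3v_4v_5v_6v_1$ is a $6$-cycle with a vertex in its interior, so by Lemma~\ref{sep6cycle} no vertex of $V(C)$ has girth at least five, contradicting $v_4=v_{k+1}\in A$. Thus it remains to treat $k=4$, where $C=v_1v_2\cdots v_7v_1$ and $v_5=v_{k+1}\in A$. Here $A\subseteq V(C)\setminus V(S)=\{v_5,v_6,v_7\}$, and $v_6\notin A$, so $A\subseteq\{v_5,v_7\}$.

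For $k=4$ I would first extend $\phi$ to an $L$-colouring of $G[V(C)]=C$ (greedily, as above), then put $G':=G-\{v_5,v_6,v_7\}$ and let $L'$ agree with $L$ on $V(S)$ and, on each $v\in V(G')\setminus V(S)$, be obtained from $L(v)$ by deleting the colours $\phi(x)$ over $x\in N_G(v)\cap\{v_5,v_6,v_7\}$. Setting $A':=\{v:|L'(v)|=2\}$, the crux is to show $K'=(G',L',S,A')$ is an unexceptional canvas. Since $\g(v_5)\ge 5$, no vertex off $\{v_5,v_6,v_7\}$ is adjacent to two of $v_5,v_6,v_7$ except possibly to $v_6$ and $v_7$ only, in which case $v_6v_7v$ is a triangle and Lemma~\ref{sep3cycle} forces $\g(v)=3$, hence $|L(v)|\ge5$ and $|L'(v)|\ge3$; so every list of a vertex that can enter $A'$ loses at most one colour, and the only vertices with $|L'(v)|<3$ are interior vertices of girth at least five whose list had size exactly three — these lie in $A'$, have $|L'(v)|=2$, and, being neighbours of a deleted vertex of $C$, lie on the new outer boundary $C'$; vertices of $V(G')\setminus V(C')$ keep their $L$-lists, so all canvas list-size inequalities hold. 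Every vertex of $A'$ is an interior vertex of $G$ adjacent to one of $v_5,v_6,v_7$, so independence of $A'$ reduces to ruling out adjacent interior vertices $x\sim v_i$, $y\sim v_j$ with $i,j\in\{5,6,7\}$ and $\g(x),\g(y)\ge5$: the cases with $i=j$ or $\{i,j\}=\{5,6\}$ or $\{i,j\}=\{6,7\}$ contradict Lemmas~\ref{sep3cycle}, \ref{sep4cycle} or $\g(v_5)\ge5$, and the case $\{i,j\}=\{5,7\}$ yields, via Lemma~\ref{3-5-5-3}, a vertex of $A$ adjacent to both $v_5$ and $v_7$, impossible since $A\subseteq\{v_5,v_7\}$. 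Finally $K'$ is unexceptional: $|V(S)|=4$ rules out type~(iii), and no vertex of $A'$ is adjacent to $v_1$ or $v_4$ — such a vertex would be an interior vertex adjacent to one of $\{v_5,v_6,v_7\}$ and to one of the endpoints of $S$ (both of which lie in $(V(C)\setminus V(S))\cup\{v_1,v_k\}$), contradicting Lemma~\ref{intg4s} and $\g\ge5$ — so types~(i) and~(ii) are excluded too. By minimality of $K$, the $L'$-colouring $\phi|_{V(S)}$ of $G'[V(S)]$ extends to an $L'$-colouring $\phi'$ of $G'$, and $\phi'\cup\phi|_{\{v_5,v_6,v_7\}}$ is an $L$-colouring of $G$, the desired contradiction.

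The main obstacle is the case $k=4$, and within it the verification that $(G',L',S,A')$ is a canvas: the bookkeeping that no list drops below size three and, above all, the independence of $A'$, which is exactly where the separating-cycle lemmas (Lemmas~\ref{sep3cycle}--\ref{sep6cycle}) and Lemma~\ref{3-5-5-3} do the work. Once $K'$ is known to be an unexceptional canvas strictly smaller than $K$, the conclusion is immediate.
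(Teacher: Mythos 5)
Your proof is correct, and it follows the same overall template as the paper (colour vertices of $C$ beyond $S$, delete them, and show the remainder is a smaller unexceptional canvas), but the reduction itself is genuinely different. The paper splits into two cases according to whether $v_{k+3}\in A$: when $v_{k+3}\in A$ it colours all of $C$ and deletes $v_{k+1},v_{k+2},v_{k+3}$, using that \emph{both} $v_{k+1}$ and $v_{k+3}$ have girth at least five to guarantee each interior vertex loses at most one colour; when $v_{k+3}\notin A$ it instead deletes only $v_{k+2}$ and $v_{k+3}$, keeping $v_{k+1}$ in the new $A$-set by choosing $\phi(v_{k+2})\notin L(v_{k+1})$ and $\phi(v_{k+3})\notin L(v_1)$. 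You delete all three vertices in both situations, and compensate for the possible low girth of $v_{k+3}$ with the observation that a common neighbour of $v_{k+2}$ and $v_{k+3}$ lies in a triangle, hence has girth three and a list of size at least five, so it can afford to lose two colours; this makes the case split unnecessary. Your independence argument for $A'$ also differs in the one nontrivial configuration (neighbours of $v_{k+1}$ and $v_{k+3}$ joined by an edge): you invoke Lemma~\ref{3-5-5-3} and the fact that $A\subseteq\{v_{k+1},v_{k+3}\}$ with $C$ chordless, whereas the paper uses Lemma~\ref{sep5cycle} to empty the resulting $5$-cycle and then gets a degree-two vertex $v_{k+2}$ contradicting minimality; both work. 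Your explicit treatment of the $V(\Int(C))=\emptyset$ and $k=3$ cases matches the paper's quick dismissal via Lemmas~\ref{chordless} and \ref{sep4cycle}--\ref{sep6cycle}, and your exclusion of exceptional types via Lemma~\ref{intg4s} is the same as the paper's. In short, your uniform deletion is a slightly cleaner packaging; the paper's Case~2 buys only that it never needs to colour all of $C$, which is not an essential saving.
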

\begin{proof}
Suppose not. Note that we may assume $|V(S)| = 4$, as otherwise since $\g(v_{k+1}) \geq 5$, we obtain a contradiction to one of Lemmas \ref{sep4cycle}-\ref{sep6cycle}. We break into cases, depending on whether or not $v_{k+3}$ is in $A$.

\vskip 4mm
\noindent
\textbf{Case 1: $v_{k+3}$ is in $A$.} In this case, let $\phi$ be a colouring of $C$. Note that $\phi$ exists, since $C$ is chordless by Lemma \ref{chordless} and $v_{k+2} \not \in A$. Thus $V(\Int(C)) \neq \emptyset$. Let $G'$ be the graph obtained from $G$ by deleting $v_{k+1}$, $v_{k+2}$, and $v_{k+3}$. Let $L'$ be the list assignment for $G'$ obtained from $L$ by setting $L'(v) = \{\phi(v)\}$ for every $v \in V(S)$ and $L'(v) = L(v) \setminus \{\phi(x): x \in N_G(v) \cap \{v_{k+1}, v_{k+2}, v_{k+3}\}\}$ for every $v \in V(G') \setminus V(S)$.
Let $C'$ be the graph whose vertex- and edge-set are precisely those of the outer face boundary of $G'$, and let $A'$ be the set of vertices in $V(C')\setminus V(S)$ with lists of size at most two under $L'$. We now show that $K' =(G', L', S, A')$ is a canvas. Note that every vertex $x \in V(\Int(C))$ is adjacent to at most one of $v_{k+1}$, $v_{k+2}$, and $v_{k+3}$: this follows immediately from the fact that both $v_{k+1}$ and $v_{k+3}$ are in $A$, and so by definition have girth at least five. Thus $|L'(x)| \geq 4$ for every vertex $x$ of girth three in $V(C') \setminus V(S)$. Similarly, $|L'(x)| \geq 3$ for every vertex $x$ of girth four in $V(C') \setminus V(S)$.  Thus $A'$ contains only vertices of girth at least five, and moreover every vertex $v$ in $A'$ satisfies $|L'(v)| = 2$. It remains only to show that $A'$ is an independent set. Suppose not: then there exist vertices $a_1, a_2$ in $A'$ such that $\{a_1a_2, a_1v_{k+3}, a_2v_{k+1}\} \subset E(G)$. But by Lemma \ref{sep5cycle}, $V(\Int(a_1a_2v_{k+1}v_{k+2}v_{k+3}a_1)) = \emptyset$ since $\g(a_1) \geq 5$. This implies that $\deg(v_{k+2}) = 2$, which is of course a contradiction since $|L(v_{k+2})| = 3$ and $K$ is a vertex-minimum counterexample.
Thus $K'$ is a canvas. We now show it is unexceptional. It is not exceptional of type (iii) since $|V(S)| = 4$.  Moreover, it is not exceptional of type (i) since every vertex in $A'$ has girth at least five and is adjacent to one of $v_{k+1}, v_{k+2},$ and $v_{k+3}$ in $G$. Finally, as noted above each vertex $x \in V(C')\setminus V(S)$ with $\g(x) = 3$ has $|L'(x)| \geq 4$. Since $K$ is unexceptional, it thus follows that $K'$ is not an exceptional canvas of type (ii).
\vskip 4mm
\noindent
\textbf{Case 2:  $v_{k+3}$ is not in $A$.} Since $A$ is an independent set and $v_{k+1} \in A$, it follows that $v_{k+2} \not \in A$ and so that $|L(v_{k+2})| = 3$. Thus $v_{k+2}$ contains a colour $c$ not in the list of $v_{k+1}$. Let $\phi$ be a colouring of $v_{k+2}$ and $v_{k+3}$, where $\phi(v_{k+2}) = c$ and $\phi(v_{k+3}) \in L(v_{k+3}) \setminus (\{c\} \cup L(v_{1}))$.  Let $G'$ be obtained from $G$ by deleting $v_{k+2}$ and $v_{k+3}$. Let $L'$ be the list assignment for $G'$ obtained from $L$ by setting $L'(v) = \{\phi(v)\}$ for every $v \in V(S)$ and $L'(v) = L(v) \setminus \{\phi(x): x \in N_G(v) \cap \{v_{k+2}, v_{k+3}\}\}$ for every $v \in V(G') \setminus V(S)$. Note that by our choice of $\phi$, we have that $L(v_{k+1}) = L'(v_{k+1})$. Let $A'$ be the set of vertices $v \in V(G')\setminus V(S)$ with $|L'(v)| \leq 2$, and let $C'$ be the graph whose vertex- and edge-set are precisely those of the outer face boundary of $G'$. As in the previous case, we will argue that $(G', L', S, A')$ is an unexceptional canvas. First, note that since every vertex $v \in V(C') \setminus V(S)$ of girth three has $|L(v)| \geq 5$, it follows that for each such vertex $|L'(v)| \geq 3$. Similarly, since every vertex $v \in V(C') \setminus V(S)$ with $\g(v) = 4$ is adjacent to at most one of $v_{k+2}$ and $v_{k+3}$ in $G$, it follows that each such vertex has $|L'(v)| \geq 4$. Thus every vertex in $A'$ has girth at least five. Note furthermore that every vertex in $A'$ has at least two colours in its list under $L'$, since $c \not \in L(v_{k+1})$ and every vertex in $A'$ is adjacent to at most one of $v_{k+2}$ and $v_{k+3}$. We claim moreover that $A'$ is an independent set: again, this follows easily from the fact that every vertex in $A'$ has girth five and is adjacent to one of $v_{k+2}$ and $v_{k+3}$. 

Thus $(G', L', S, A')$ is a canvas; it remains only to show it is unexceptional. That it is not exceptional of type (i) follow from the facts that $v_{k+1} \in A'$ and $C$ is chordless (thus $v_1v_{k+1} \not \in E(G)$), and that no vertex in $A' \setminus A$ is adjacent to both $v_1$ and $v_4$ since every vertex in $A' \setminus A$ is adjacent to one of $v_{k+2}$ and $v_{k+3}$. Suppose now that $(G', L', S, A')$ is exceptional of type (ii). In this case, there exists a vertex $v \in V(\Int(C))$ of girth three that is adjacent to a vertex $u$ in $A'$ and $|L'(v)| = 3$. Since $|L(v)| \geq 5$, it follows that $v$ is adjacent to both $v_{k+3}$ and $v_{k+2}$. Since every vertex in $A'$ is adjacent to one of $v_{k+3}$ and $v_{k+2}$ in $G$ and has girth at least five, this is a contradiction. Finally, $(G', L', S, A')$ is not an exceptional canvas of type (iii) since $S$ contains four vertices. By the minimality of $K$ and the fact that $|V(G')| < |V(G)|$, we have that $G'$ admits an $L'$-colouring $\phi'$. This again is a contradiction, since $\phi \cup \phi'$ is an $L$-colouring of $G$.  
\end{proof}

 We make the following definitions for convenience.

\begin{definition}
Let $k'$ be an index defined as follows: if $v_{k+1} \in A$, then $k' = k+1$. Otherwise, $k' = k$.
\end{definition} 
\begin{definition}
The \emph{available colour} $c$ at $v_{k'}$ is defined as follows: if $k = k'$, then $c \in L(v_{k'})$, and if $k' = k+1$, then $c \in  L(v_{k'}) \setminus L(v_k)$.
\end{definition}
By Observation \ref{listsizes}, $|L(v_k)| = 1$ and if $v_{k+1} \in A$, then $|L(v_{k'})| = 2$. We will show below in Lemma \ref{colours} (1) that the available colour at $v_{k'}$ is uniquely determined.

By Lemmas \ref{extraverts1} and \ref{extraverts2}, there exist vertices $v_{k'+1}, v_{k'+2}, v_{k'+3} \in V(C) \setminus V(S)$. The following  lemma partially describes their list assignments.

\begin{lemma}\label{colours} The following hold.
\begin{enumerate}[(1)]
\item $L(v_k) \subset L(v_{k+1})$,
\item $L(v_{k'+1}) \setminus \{c\} \subseteq L(v_{k'+2})$, where $c$ is the available colour at $v_{k'}$, and
\item $L(v_{k'+2}) \subseteq L(v_{k'+3})$.
\end{enumerate}
\end{lemma}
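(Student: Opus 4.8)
The plan is to prove all three containments by the same mechanism: in each case we assume the containment fails, use the failure to choose a colour for one extra vertex of $C$ so that the resulting structure is again a canvas with a strictly larger precoloured path (or one fewer vertex), and then invoke the minimality of $K$ together with the exceptional-canvas analysis to derive a contradiction. Throughout we will lean on Observation~\ref{listsizes} (so $|L(v_k)|=1$, $|L(v)|=3$ for $v\in V(C)\setminus(V(S)\cup A)$, $|L(v)|=2$ for $v\in A$), on Lemma~\ref{chordless} (so $C$ is chordless, killing exceptional canvases of types (i) and (ii) whenever the new graph's outer boundary lies inside $C$), and on Lemmas~\ref{extraverts1} and~\ref{extraverts2} (so the vertices $v_{k'+1},v_{k'+2},v_{k'+3}$ genuinely exist).

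For (1): suppose $L(v_k)\not\subseteq L(v_{k+1})$, so there is a colour $c'\in L(v_k)\setminus L(v_{k+1})$; actually the cleaner route is to pick $c^\star\in L(v_{k+1})\setminus L(v_k)$ (which exists since $|L(v_{k+1})|\geq 3>1=|L(v_k)|$, but we want the \emph{containment} to fail, so instead pick a colour of $L(v_k)$ outside $L(v_{k+1})$). Set $L'(v_{k+1})=\{c^\star\}$ for a colour $c^\star\in L(v_{k+1})$ and try to extend $S$ to $S'=Sv_{k+1}$. The subtlety is exactly that $S'$ must be an acceptable path: $S$ has length at most three, and adding $v_{k+1}$ gives length at most four; if the length becomes four we need a girth condition on $v_2,v_3$ (if $k=3$, the new interior vertices are $v_2,v_3$) — but this is where the hypothesis $L(v_k)\not\subseteq L(v_{k+1})$ gets used differently depending on $k$. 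I expect the argument to mirror Observation~\ref{kgeq3}: precolour $v_{k+1}$ with a colour forbidden in a way that forces $K'=(G,L',S',A)$ to be a canvas; by minimality of $K$ (same vertex set, strictly smaller $\sum|L(v)|$) it must be an exceptional canvas, and since its precoloured path has length $\leq 4$ with $C$ chordless, it can only be type (iii) (when $k=3$) or forced to fall apart; then $W=G$ is a wheel with a degree-$3$ vertex $v_{k+1}$ whose list still has $\geq 3$ colours after removing the two-element set $L(v_k)$... wait, $|L(v_{k+1})|=3$ so this needs the available-colour trick. The honest expectation is that (1) reduces, after precolouring, to the statement that $G$ would be an exceptional canvas of type (iii) and then a direct greedy colouring of the degree-$3$ wheel-rim vertex yields a contradiction, exactly as in Observation~\ref{kgeq3}.

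For (2) and (3): these are structurally identical to (1) but one step further along $C$. For (2), suppose $d\in L(v_{k'+1})\setminus\big(\{c\}\cup L(v_{k'+2})\big)$ where $c$ is the available colour at $v_{k'}$. Colour $v_{k'+1}$ with $c$ (at $v_{k'}$, if $k=k'$, colour $v_k$ with $c$; this is consistent because $|L(v_k)|=1$; if $k'=k+1$, colour $v_{k+1}$ with its available colour — legitimate since $c\notin L(v_k)$), then colour $v_{k'+1}$ with $d$, delete these coloured vertices, subtract their colours from interior neighbours, and check the resulting $(G',L',S,A')$ is an unexceptional canvas exactly as in the proofs of Lemmas~\ref{extraverts1} and~\ref{extraverts2}: girth-$3$ interior vertices keep $\geq 3$ colours since they started with $\geq 5$; girth-$4$ interior vertices keep $\geq 3$ since they started with $\geq 4$ and touch at most one deleted vertex (here one uses Lemma~\ref{sep3cycle}/\ref{sep4cycle} to bound how many deleted rim vertices a single interior vertex can see); $A'$ is independent by Lemma~\ref{intg4s}; it is not type (i)/(ii) since $C$ is chordless and no new $A$-vertex sees both $v_1$ and $v_4$; not type (iii) since $|V(S)|\le 4$ is unchanged (and if $|V(S)|=3$ one argues as before that $G$ would be a wheel contradicting Lemma~\ref{sep3cycle} or ~\ref{sep4cycle}). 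Minimality of $K$ then gives an $L'$-colouring of $G'$ extending to $G$, the contradiction. Part (3) is the same with the pair $v_{k'+2},v_{k'+3}$ and the containment $L(v_{k'+2})\subseteq L(v_{k'+3})$: assume $d\in L(v_{k'+2})\setminus L(v_{k'+3})$, colour $v_{k'+2}$ with $d$ and $v_{k'+3}$ with anything in its list avoiding the colour of $v_1$ and $d$, delete, and repeat the bookkeeping.

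\textbf{Main obstacle.} The crux in every part is verifying that the newly precoloured/deleted configuration does not create an exceptional canvas — in particular ruling out type (iii) when $|V(S)|=3$, which (as in Observation~\ref{kgeq3} and Lemma~\ref{extraverts1}) forces $G$ to be a wheel or near-triangulation and then requires a separate short argument (a greedy colouring of a degree-$3$ rim vertex, or an appeal to Theorem~\ref{thomassen3ext}) to finish. The second delicate point is the precise choice of which colour to assign to $v_{k'}$ or $v_{k+1}$ so that the chosen extension is simultaneously (a) a proper partial $L$-colouring, (b) leaves every interior neighbour with enough colours, and (c) has its "failure colour" $d$ still available at $v_{k'+1}$ (resp. $v_{k'+2}$) — this is exactly what the definition of the available colour $c$ and the hypothesis $v_{k+1}\in A\Rightarrow |L(v_{k+1})|=2$ with $c\notin L(v_k)$ are engineered to guarantee, so the bookkeeping should go through, but it must be done carefully in each of the two cases $k=k'$ and $k'=k+1$.
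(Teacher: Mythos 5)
Your proposal has genuine gaps in all three parts, and in each case the missing idea is a different reduction than the one you sketch. For (1), you propose to extend the precoloured path to $S'=Sv_{k+1}$, but this cannot work in general: acceptable paths have at most four vertices, so when $|V(S)|=4$ the extended path is never acceptable, and when $|V(S)|=3$ with all three vertices of girth three the extended four-vertex path again fails the girth condition. You notice this obstacle but only say you ``expect the argument to mirror Observation~\ref{kgeq3}''; that observation works precisely because there $|V(S)|=2$, so the extension is automatically acceptable. The paper goes the opposite way: it deletes $v_k$, shortens the precoloured path to $S-v_k$ (which is automatically acceptable and short enough to kill all exceptional types), and uses the hypothesis $L(v_k)\not\subset L(v_{k+1})$ only to guarantee that $v_{k+1}$ loses no colour, so the deleted-and-recoloured structure is still a canvas; the colour of $v_k$ is restored at the end. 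For (2), your plan of colouring and deleting both $v_{k'}$ and $v_{k'+1}$ misses the key trick: leave $v_{k'}$ in the graph, delete only $v_{k'+1}$, and remove only the failure colour $c'$ from its interior neighbours; since every $L'$-colouring of the smaller graph is forced to give $v_{k'}$ the available colour $c\neq c'$, the colour $c'$ remains usable at $v_{k'+1}$, and because interior vertices lose at most one colour, girth-three interior vertices keep at least four colours, which instantly rules out exceptional canvases of types (ii) and (iii). In your two-deletion variant (the case $k'=k+1$), an interior girth-three vertex adjacent to both deleted vertices can drop to a list of size three, and then your claim that a type (iii) outcome ``contradicts Lemma~\ref{sep3cycle} or~\ref{sep4cycle}'' is simply false: dispatching that case requires the dedicated machinery of Lemmas~\ref{nogenwheels} and~\ref{deg3forv4}, which you never invoke.

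For (3) the sketch is wrong as written: you colour and delete the pair $v_{k'+2},v_{k'+3}$, which strands $v_{k'+1}$ (possibly of girth three, list size three) on the new outer boundary; either you remove $\phi(v_{k'+2})$ from its list, in which case the remaining structure is not a canvas, or you do not, in which case the colouring of the smaller graph need not be compatible with $\phi(v_{k'+2})$. The correct reduction colours and deletes $v_{k'+1}$ and $v_{k'+2}$, choosing $\phi(v_{k'+2})=c'$ and $\phi(v_{k'+1})\in L(v_{k'+1})\setminus\{c,c'\}$ (possible since $|L(v_{k'+1})|=3$), so that $v_{k'+3}$ keeps its full list because $c'\notin L(v_{k'+3})$. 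Even then, the type (iii) exceptional case cannot be waved away: an interior girth-three vertex adjacent to both deleted vertices can sit on the rim of a generalized wheel with list of size exactly three, and the contradiction is obtained by combining Lemma~\ref{nogenwheels} (to force adjacency to $v_4$) with Lemma~\ref{deg3forv4}; this is in fact the purpose of Lemma~\ref{deg3forv4}, and no appeal to Lemmas~\ref{sep3cycle} or~\ref{sep4cycle} or a ``$G$ is a wheel'' argument replaces it.
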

\begin{proof}[Proof of (1)]

Suppose not. That is, suppose $L(v_k) \not \subset L(v_{k+1})$. Recall that by Observation \ref{listsizes}, $|L(v_k)| = 1$. Let $G' = G-v_k$, and let $L'$ be a list assignment for $G'$ defined by $L'(v) = L(v) \setminus L(v_{k})$ for all $v \in N_G(v_k)$, and $L'(v) = L(v)$ for all $v \in V(G') \setminus N_G(v_k)$.  Note that $L(v_{k+1}) = L'(v_{k+1})$ since $L(v_k) \not \subset L(v_{k+1})$. Let $A'$ be the set of vertices in $V(G') \setminus V(S)$ that have lists of size at most two under $L'$. Since $L(v_{k+1}) = L'(v_{k+1})$, it follows that $v_{k+1} \not \in A'\setminus A$. Since $C$ is chordless by Lemma \ref{chordless}, it follows that  $A' \setminus A \subseteq V(\Int(C))$.  Thus every vertex $v \in A' \setminus A$ is a neighbour of $v_k$ in $G$ and has $|L(v)| = 3$, and so every vertex $v \in A' \setminus A$ satisfies both $|L'(v)| = 2$ and $\g(v) \geq 5$. Since every vertex in $A$ has girth at least five by definition, it follows that every vertex in $A'$ has girth at least five. Since every vertex in $A' \setminus A$ is adjacent to $v_k$ in $G$ and is in $V(\Int(C))$, it follows from Lemma \ref{intg4s} that $A'$ is an independent set. Thus $(G', L', S-v_k, A')$ is a canvas. Note that $S-v_k$ is an acceptable path in $G'$, since a subpath of an acceptable path is itself acceptable. Furthermore, note that $|V(S-v_k)| \leq 3$, and so $(G', L', S-v_k, A')$ is not an exceptional canvas of type (i) or (ii). If $|V(S-v_k)| = 3$, then by the definition of acceptable path since $S$ is acceptable $S-v_k$ contains a vertex of girth at least four. It follows that $(G', L', S-v_k, A')$ is not an exceptional canvas of type (iii), and is thus unexceptional. By the minimality of $K$, we have that $G'$ admits an $L'$-colouring $\phi$. But $\phi$ extends to an $L$-colouring of $G$ by setting $\phi(v_k) \in L(v_k)$, a contradiction.
\end{proof}
\begin{proof}[Proof of (2)]
Suppose not. That is, suppose there exists a colour $c' \in L(v_{k'+1}) \setminus (\{c\} \cup L(v_{k'+2})).$  Let $G' = G-v_{k'+1}$, and let $L'$ be a list assignment for $G'$ defined by $L'(v) = L(v) \setminus \{c'\}$ for all $v \in N_G(v_{k'+1})\setminus \{v_{k'}\}$, and $L'(v) = L(v)$ for all other $v \in V(G')$.  Let $A'$ be the set of vertices in $V(G') \setminus V(S)$ that have lists of size at most two under $L'$. Since $c' \not \in L(v_{k'+2})$, it follows that $L'(v_{k'+2}) = L(v_{k'+2})$. Since $C$ is chordless by Lemma \ref{chordless}, we have further that $A' \setminus A \subseteq V(\Int(C))$. We claim $K' = (G', L', S, A')$ is a canvas; the argument is identical to that in the proof of (1).  

First suppose $K'$ is unexceptional. Then by the minimality of $G$ we have that $G'$ admits an $L'$-colouring $\phi$. By the definition of available colour at $v_{k'}$, it follows that $\phi(v_{k'}) = c$. Since $c \neq c'$, we have that $\phi$ extends to an $L$-colouring of $G$ by setting $\phi(v_{k'+1}) = c'$, a contradiction. Thus we may assume that $K'$ is exceptional.

Suppose next that $K'$ is an exceptional canvas of type (i). Then there exists a vertex $u \in A'$ adjacent to $v_1$ and $v_4$. Since $K$ is unexceptional, we have that $u \not \in A$. Thus $u \in A' \setminus A$; but $v_1uv_4$ contradicts Lemma \ref{intg4s}, since $u \in V(\Int(C))$ and $\g(u) \geq 5$.

Next, suppose $(G', L', S, A')$ is an exceptional of type (ii). Then there exists a vertex $v \in V(G') \setminus V(S)$ with $\g(v) = 3$ and $|L'(v)|=3$ such that $v$ is adjacent to one of $v_2$ and $v_3$. Since $C$ is chordless, $v \in V(\Int(C))$, and so $|L(v)| \geq 5$. But then $|L'(v)| \geq 4$, a contradiction. 

Thus we may assume that $K'$ is an exceptional canvas of type (iii), and therefore that $G'$ contains a generalized wheel $W$ such that the vertices on the outer cycle of $W$ are on the outer cycle of $G'$ and have lists of size at most three under $L'$. Since $K$ is unexceptional, it follows that there exists a vertex $w$ on the outer cycle of $W$ that is not on the outer cycle of $G$. Since $w \in V(W)$, we have that $\g(w) = 3$ and so $|L(w)| \geq 5$. But then $|L'(w)| \geq 4$, a contradiction.
\end{proof}
\begin{proof}[Proof of (3)]
Suppose not. That is, suppose there exists a colour $c' \in L(v_{k'+2}) \setminus L(v_{k'+3})$. Let $\phi$ be an $L$-colouring of $v_{k'+2}$ and $v_{k'+1}$ where $\phi(v_{k'+2}) = c'$ and where $\phi(v_{k'+1}) \in L(v_{k'+1}) \setminus \{c, c'\}$, where $c$ is the available colour at $v_{k'}$. Note that since $A$ is an independent set, $|L(v_{k'+1})| = 3$ and so $\phi$ exists as described.

Let $G'$ be the graph obtained from $G$ by deleting $v_{k'+1}$ and $v_{k'+2}$, and let $C'$ be the subgraph of $G'$ with vertex-set and edge-set equal to that of the outer face boundary walk of $G'$. Let $L'$ be a list assignment obtained from $L$ by setting $L'(v_{k'}) = L(v_{k'})$, and $L'(v) = L(v) \setminus \{\phi(v_i) : i \in \{k'+1, k'+2\} \textnormal{ and } v_i \in N(v)\}$ for all $v \in V(G')\setminus \{v_{k'}\}$. Let $A'$ be the set of vertices in $V(G') \setminus V(S)$ with lists of size at most two under $L'$. Note that by assumption, $L'(v_{k'+3}) = L(v_{k'+3})$ and so since $C$ is chordless by Lemma \ref{chordless} it follows that every vertex $v \in V(C) \setminus \{v_{k'+1}, v_{k'+2}\}$ satisfies $L(v) = L'(v)$. We claim that $K' = (G', L', S, A')$ is a canvas. To see this, note that every vertex $v \in V(\Int(C))$ with $\g(v) = 3$ satisfies $|L(v)| \geq 5$ and thus $|L'(v)| \geq 3$. Similarly, every vertex $v \in V(\Int(C))$ of girth four is adjacent to at most one of $v_{k'+1}$ and $v_{k'+2}$ in $G$ and so satisfies $|L'(v)| \geq 3$. It follows that every vertex  $v \in A'$ has girth at least five and so has exactly two colours in its list under $L'$. Finally, we note that $A'$ is an independent set. To see this, observe that every vertex in $A'\setminus A$ has girth at least five and is adjacent to one of $v_{k'+1}$ and $v_{k'+2}$ in $G$. Thus $A'\setminus A$ is an independent set, and it follows from Lemma \ref{intg4s} that there are no edges between vertices in $A'\setminus A$ and vertices in $A$. This proves the claim that $K'$ is a canvas.

First suppose that $K'$ is unexceptional. By the minimality of $K$, we have that $G'$ admits an $L'$-colouring $\phi'$. But then $\phi \cup \phi'$ is an $L$-colouring of $G$, a contradiction. Thus we may assume $K'$ is exceptional.

Suppose next that $K'$ is an exceptional canvas of type (i). Then there exists a vertex $u \in A'$ adjacent to $v_1$ and $v_4$. Since $K$ is unexceptional, we have that $u \not \in A$. Thus $u \in A' \setminus A$; but $v_1uv_4$ contradicts Lemma \ref{intg4s} since $u \in V(\Int(C))$ and $\g(u) \geq 5$.

Next, suppose that $K'$ is an exceptional canvas of type (ii). Then there exists a vertex $w \in V(G') \setminus V(S)$ with $\g(w) = 3$ and $|L'(w)|=3$ such that $w$ is adjacent to one of $v_2$ and $v_3$ and to a vertex $u \in A'$. Since $C$ is chordless, $v \in V(\Int(C))$. Since $|L'(w)| = 3$ and $w \in V(\Int(C))$, it follows that $w$ is a neighbour of both $v_{k'+1}$ and $v_{k'+2}$ in $G$. If $u \in A$, then $uwv_{k'+1}$ contradicts Lemma \ref{intg4s} since $\g(u) \geq 5$. Thus $u \in A' \setminus A$, and so it follows that $u$ is adjacent to a vertex $v$ in $\{v_{k'+1}, v_{k'+2}\}$. This is a contradiction, since $\g(u) \geq 5$ and $uvwu$ is a cycle of length 3. 

Thus we may assume $K'$ is an exceptional canvas of type (iii). Then $k=3$, and $S$ contains only vertices of girth three. Moreover, $G'$ contains a generalized wheel $W$ such that the vertices on the outer cycle of $W$ are on the outer cycle of $G'$ and have lists of size at most three under $L'$. Let $v_1v_2v_3w_1 \dots w_tv_1$ be the outer face boundary walk of $W$. Since $|L'(w_1)| = 3$, it follows that $|L(w_1)| \geq 3$. Thus $w_1 \not \in A$. Moreover, $w_1 \neq v_{k'+1}$ since $v_{k'+1} \not \in V(G')$. It follows that $w_1 \neq v_4$, and so that $w_1 \in V(\Int(C))$. Thus $|L(w_1)| \geq 5$; and since $|L'(w_1)| =3$, we have that $w_1$ is adjacent to both $v_{k'+1}$ and $v_{k'+2}$ in $G$. Since $5 \in \{k'+1, k'+2\}$, we have that $w_1v_5 \in E(G)$. Since $w_1$ is adjacent to $v_3$, it follows from Lemma \ref{nogenwheels} that $w_1$ is also adjacent to $v_4$ in $G$. This contradicts Lemma \ref{deg3forv4}. 
\end{proof}

We require one final corollary, which will be used to show that $G$ contains our main reducible configuration. The corollary shows that $A \cap \{v_{k+1}, v_{k+2}, v_{k+3}, v_{k+4}\} \not \in \{\{v_{k+1}, v_{k+4}\}, \{v_{k+3}\}\}.$ 

\begin{cor}\label{notthird}
 $v_{k'+3} \not \in A$.
\end{cor}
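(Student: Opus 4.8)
The plan is to derive a contradiction from the assumption $v_{k'+3} \in A$ by combining part (3) of Lemma \ref{colours} with the list-size bookkeeping of Observation \ref{listsizes} and the independence of $A \setminus V(S)$. The key point is that membership in $A$ forces a list of size exactly two, while Lemma \ref{colours}(3) propagates a list-containment from $v_{k'+2}$ up to $v_{k'+3}$.

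Concretely, I would first record that by Lemmas \ref{extraverts1} and \ref{extraverts2} the vertices $v_{k'+1}, v_{k'+2}, v_{k'+3}$ all lie in $V(C) \setminus V(S)$; moreover the index bounds $q \geq k+3$ (and $q \geq k+4$ when $v_{k+1} \in A$) guarantee that $v_{k'+2}$ and $v_{k'+3}$ are distinct vertices of the cycle $C$, joined by an edge, and that neither equals $v_1$. Next, suppose toward a contradiction that $v_{k'+3} \in A$. Since $A \cap V(S) = \emptyset$, the definition of canvas (equivalently Observation \ref{listsizes}) gives $|L(v_{k'+3})| = 2$, and by Lemma \ref{colours}(3) we have $L(v_{k'+2}) \subseteq L(v_{k'+3})$, so $|L(v_{k'+2})| \leq 2$. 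But $v_{k'+2} \in V(C) \setminus V(S)$, and Observation \ref{listsizes} says every such vertex has list size $2$ (if it lies in $A$) or $3$ (otherwise); hence $v_{k'+2} \in A$. Then $v_{k'+2}$ and $v_{k'+3}$ are adjacent vertices of $C$ both lying in $A \setminus V(S)$, contradicting the fact that $A \setminus V(S)$ is an independent set. Thus $v_{k'+3} \notin A$.

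I do not expect a real obstacle here; this is a short deduction from already-established facts. The only care needed is in the setup step — verifying that $v_{k'+2}$ and $v_{k'+3}$ really are two distinct non-precoloured vertices of $C$ joined by an edge, so that the independence of $A \setminus V(S)$ applies — which is exactly what the index bounds from Lemmas \ref{extraverts1} and \ref{extraverts2} provide.
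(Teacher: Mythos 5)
Your proposal is correct and uses exactly the same ingredients as the paper's proof (Observation \ref{listsizes}, Lemma \ref{colours}(3), and the independence of $A$), merely rearranged: the paper deduces $v_{k'+2}\notin A$ from independence and then contradicts Lemma \ref{colours}(3), while you use Lemma \ref{colours}(3) to force $v_{k'+2}\in A$ and then contradict independence. This is essentially the same short argument.
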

\begin{proof}
Suppose not. Since $v_{k'+3} \in A$, it follows that $|L(v_{k'+3})| = 2$. Since $A$ is an independent set, $v_{k'+2} \not \in A$ and so by Observation \ref{listsizes} we have that $|L(v_{k'+2})| = 3$. But this contradicts Lemma \ref{colours} (3).

\end{proof}

Since $A$ is an independent set, we thus have that if $v_{k+1} \in A$, then $A \cap \{v_{k+2}, v_{k+3}, v_{k+4}\} \in \{\emptyset, \{v_{k+3}\}\}$. Similarly, if $v_{k+1} \not \in A$, then $A \cap \{v_{k+1}, v_{k+2}, v_{k+3}\} \in \{\emptyset, \{v_{k+2}\}\}$. To summarize: $A \cap \{v_{k'+1}, v_{k'+2}, v_{k'+3}\} \in \{\emptyset, \{v_{k'+2}\}\}$.

We now define our main reducible configuration.  It follows from Lemmas \ref{extraverts1} and \ref{extraverts2} that $q \geq k'+3$. Recall that by Observation \ref{listsizes}, every vertex in $V(C) \setminus (V(S) \cup A)$ has a list of size three.

\begin{definition}\label{defdelpath}
 Let $P = v_{k'+1}v_{k'+2} \dots v_{j}$ be a subpath of $C$ satisfying the following set of conditions: 
\begin{enumerate}
    \item $j \in \{k'+3, \dots, q\}$,
    \item either $V(P) \cap A = \{v_{k'+2}\}$ or $V(P) \cap A = \emptyset$, 
    \item $L(v_{i-1}) \subseteq L(v_i)$ for all $k'+3 \leq i \leq j$, and
    \item $L(v_{j}) \not \subseteq L(v_{(j \mod q)+1})$.
\end{enumerate}
We call $P$ a deletable path. 
\end{definition}

By Corollary \ref{notthird}, we have that $A \cap \{v_{k'+1}, v_{k'+2}, v_{k'+3}\} \in \{\emptyset, \{v_{k'+2}\}\}$. Thus we immediately have the following.

\begin{cor}\label{delpath}
$G$ contains a deletable path.
\end{cor}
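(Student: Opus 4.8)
The plan is to construct a deletable path greedily, starting from $v_{k'+1}$ and extending as far as possible along $C$ away from $S$, and then to verify that the path we obtain satisfies all four conditions of Definition \ref{defdelpath}. First I would set $v_{k'+1}$ as the first vertex of the prospective path $P$; note that by Lemmas \ref{extraverts1} and \ref{extraverts2} (summarized just before the definition) we have $q \geq k'+3$, so the vertices $v_{k'+1}, v_{k'+2}, v_{k'+3}$ all exist and lie in $V(C) \setminus V(S)$. Next I would define $j$ to be the largest index in $\{k'+3, \dots, q\}$ such that $L(v_{i-1}) \subseteq L(v_i)$ holds for every $k'+3 \leq i \leq j$; such a $j$ exists because $L(v_{k'+2}) \subseteq L(v_{k'+3})$ by Lemma \ref{colours}(3), which shows the chain of inclusions is nonempty, so $j = k'+3$ always qualifies as a candidate. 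This choice of $j$ automatically gives conditions (1) and (3).

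For condition (4), I would argue by the maximality of $j$. If $j < q$, then by the choice of $j$ we must have $L(v_j) \not\subseteq L(v_{j+1})$, which is exactly condition (4) since $(j \bmod q)+1 = j+1$ in this range. If $j = q$, then $(j \bmod q)+1 = 1$, so condition (4) becomes $L(v_q) \not\subseteq L(v_1)$; since $|L(v_1)| = 1$ by Observation \ref{listsizes} (as $v_1 \in V(S)$) while $|L(v_q)| \geq 2$ (as $v_q \in V(C) \setminus V(S)$ has a list of size $2$ or $3$), the inclusion $L(v_q) \subseteq L(v_1)$ is impossible, so condition (4) holds. Thus in either case the maximal $j$ yields condition (4) for free.

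Finally, for condition (2), I would invoke Corollary \ref{notthird} together with the fact that $A$ is an independent set: as noted in the paragraph preceding Definition \ref{defdelpath}, these facts give $A \cap \{v_{k'+1}, v_{k'+2}, v_{k'+3}\} \in \{\emptyset, \{v_{k'+2}\}\}$. So among the first three vertices of $P$, the only vertex that can lie in $A$ is $v_{k'+2}$. For the remaining vertices $v_{k'+4}, \dots, v_j$ of $P$ (if any), condition (3) forces $|L(v_i)| \geq |L(v_{i-1})| \geq |L(v_{k'+3})| = 3$ once $i \geq k'+4$ — wait, this needs $|L(v_{k'+3})| = 3$, which holds since $v_{k'+3} \notin A$ by Corollary \ref{notthird} and $v_{k'+3} \notin V(S)$, so by Observation \ref{listsizes} it has a list of size three — hence no $v_i$ with $i \geq k'+3$ lies in $A$ (elements of $A$ have lists of size two). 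Therefore $V(P) \cap A \subseteq \{v_{k'+2}\}$, giving condition (2). Combining the four verifications, $P = v_{k'+1}v_{k'+2}\cdots v_j$ is a deletable path.

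I expect the only subtle point — and hence the "main obstacle" — to be making sure condition (4) is handled correctly in the wrap-around case $j = q$, since there one is comparing against a precoloured vertex of $S$; but as indicated above this is immediate from the list-size bound in Observation \ref{listsizes}. Everything else is a direct unwinding of Lemma \ref{colours}(3), Corollary \ref{notthird}, independence of $A$, and Observation \ref{listsizes}.
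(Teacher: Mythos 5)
Your proposal is correct and matches the paper's (very terse) argument: the paper simply notes that $A \cap \{v_{k'+1}, v_{k'+2}, v_{k'+3}\} \in \{\emptyset, \{v_{k'+2}\}\}$ and calls the corollary immediate, implicitly relying on the same greedy choice of the largest admissible $j$ that you spell out. Your careful handling of the wrap-around case $j=q$ via the list-size bound and the observation that condition (3) forces $|L(v_i)|=3$ for $i \geq k'+3$ (ruling out membership in $A$) fills in precisely the routine details the paper leaves to the reader.
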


\section{A Proof of Our Main Theorem}
Before proceeding, we give a brief overview of this section. This section contains a proof of our main technical theorem, Theorem \ref{345colouring}. Recall that $K = (G, L, S, A)$ is a counterexample to Theorem \ref{345colouring}, where $|V(G)|$ is minimized over all counterexamples to the theorem, and subject to that, where $\sum_{v \in V(G)}|L(v)|$ is minimized. By Corollary \ref{delpath}, $G$ contains a deletable path $P = v_{k'+1}v_{k'+2} \dots v_{j}$. 

Most of the lemmas in this section take the following basic shape: we colour and delete $P$, and argue about the structure of what remains. In Lemma \ref{notnone}, we show that $V(P) \cap A = \{v_{k'+2}\}$ and argue that there exists a separating path as described in Lemma \ref{3-5-5-3}, where the endpoints of the path are $v_{k'+1}$ and $v_{k'+3}$. 

From there, Lemma \ref{P-structure-i} establishes that no vertex in $\Int(C)$ is adjacent to two vertices on $P$ at distance at least two in $P$. This will be useful in arguing that upon deleting $P$ and modifying lists where appropriate, what remains is a canvas: that is, no list loses too many colours. Finally, in Lemma \ref{notctx}, we colour and delete $P$ as well as the separating path described in Lemma \ref{notnone} and argue via a series of claims that what remains is an unexceptional canvas. By induction, this canvas admits an $L$-colouring \textemdash and so $G$ admits an $L$-colouring. This shows that $K$ is not a counterexample to Theorem \ref{345colouring}, completing the proof.

We begin with the following lemma.

\begin{lemma}\label{notnone}
$V(P) \cap A = \{v_{k'+2}\}$, and there exist distinct vertices $u_1, u_2 \in V(\Int(C))$ with $\g(u_1) \geq 5$ and $\g(u_2) \geq 5$ such that $u_1$ is adjacent to $v_{k'+1}$, $u_2$ is adjacent to $v_{k'+3}$, and $u_1$ is adjacent to $u_2$.
\end{lemma}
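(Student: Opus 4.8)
The plan is to take a deletable path $P = v_{k'+1}v_{k'+2}\cdots v_j$, colour it greedily starting from the available colour at $v_{k'}$, delete it, adjust the lists on the interior vertices, and argue that what remains is an unexceptional canvas (which by minimality of $K$ admits an $L$-colouring, giving the contradiction). The structure of $P$ forces this colouring to work: by Lemma \ref{colours}(1), $L(v_k)\subset L(v_{k+1})$, so (whether or not $v_{k+1}\in A$) there is a genuine available colour $c$ at $v_{k'}$, and by condition (3) in Definition \ref{defdelpath} the lists are nested along $P$ after $v_{k'+2}$. Thus I can colour $v_{k'+1}$ with any colour in $L(v_{k'+1})\setminus\{c\}$ (two choices, using Observation \ref{listsizes} and that $v_{k'+1}\notin A$ implies $|L(v_{k'+1})|=3$, or $v_{k'+1}\in A$ is impossible here since $v_{k'+1}$ would then be $v_{k+2}$... — careful: if $v_{k+1}\in A$ then $k'=k+1$ and $v_{k'+1}=v_{k+2}\notin A$ since $A$ independent), and then colour $v_{k'+2},\dots,v_j$ one at a time, each time avoiding the two previously-used colours on the two preceding path vertices; the nesting $L(v_{i-1})\subseteq L(v_i)$ makes this possible for all $i\ge k'+3$, and $v_{k'+2}$ (if in $A$, $|L|=2$) still gets coloured because we only need to avoid one forbidden colour there.

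The real content of the lemma is that this reduction cannot be carried out — i.e. that deleting $P$ and updating lists must create an exceptional canvas — and then to extract from that failure the claimed structure. So I would argue as follows. Let $G' = G - V(P)$, let $C'$ be its outer boundary, and let $L'$ be obtained by deleting from each interior neighbour of $V(P)$ the colours used on $P$; set $A'$ to be the vertices of $C'\setminus V(S)$ with $|L'|\le 2$. The key point is that each interior vertex of girth $3$ had list size $\ge 5$ and girth-$4$ interior vertices had list size $\ge 4$ and (by Lemmas \ref{sep3cycle}–\ref{sep5cycle}, planarity, and the chordlessness from Lemma \ref{chordless}) are adjacent to a bounded number of vertices of $P$; so every vertex ending up in $A'$ has girth $\ge 5$, and generically $A'$ is independent and $K' = (G', L', S, A')$ is a canvas. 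Since $G$ has no $L$-colouring, $K'$ must be exceptional, or $A'$ fails to be independent, or some interior vertex loses too many colours. Ruling out exceptional types (i) and (ii) uses Lemma \ref{intg4s} (no interior vertex of girth $\ge4$ sees two non-$S$ boundary vertices) and chordlessness, exactly as in Lemmas \ref{colours} and \ref{extraverts2}. Ruling out type (iii) uses Lemma \ref{deg3forv4} together with Lemma \ref{nogenwheels}, as in the proof of Lemma \ref{colours}(3). The remaining obstruction must therefore be that some interior vertex $u$ has lost three colours, or that $A'$ is not independent — and this is precisely where the claimed adjacency structure is forced: a vertex $u_1\in\Int(C)$ adjacent to $v_{k'+1}$ and a vertex $u_2\in\Int(C)$ adjacent to $v_{k'+3}$, with $u_1u_2\in E(G)$ and $\g(u_1),\g(u_2)\ge 5$, arising as a pair in $A'$ joined by an edge (each losing one colour from $P$, hence girth $\ge 5$, and by Lemma \ref{sep5cycle} the $5$-cycle $u_1u_2v_{k'+3}v_{k'+2}v_{k'+1}u_1$ has empty interior — wait, that forces $\deg(v_{k'+2})=2$ unless... this needs care, since $v_{k'+2}$ may be in $A$; in fact $v_{k'+2}\in A$ is what makes it consistent). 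So the first assertion $V(P)\cap A = \{v_{k'+2}\}$ must come out as: if $V(P)\cap A = \emptyset$ then one can choose the colouring on $P$ with enough freedom that no pair of interior vertices can both be forced down to size $2$ while adjacent, so $K'$ would be a valid unexceptional canvas — contradiction; hence $v_{k'+2}\in A$.

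The main obstacle I anticipate is the bookkeeping for $V(P)\cap A=\emptyset$ versus $V(P)\cap A=\{v_{k'+2}\}$: one has to show that in the $\emptyset$ case there is enough slack (an extra colour choice somewhere along $P$, or the fact that no vertex of $A'$ can arise) to complete the reduction, so that the failure of the reduction \emph{forces} $v_{k'+2}\in A$; and then, granting $v_{k'+2}\in A$, to show that the only surviving obstruction is a single edge $u_1u_2$ inside a $5$-cycle on $v_{k'+1}v_{k'+2}v_{k'+3}$, which by Lemma \ref{sep5cycle} has empty interior, pinning down $N(u_1)$ and $N(u_2)$ and giving $\g(u_1),\g(u_2)\ge 5$ from the fact that each lies in $A'$. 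Throughout, Observation \ref{subcanvas} is used freely to keep subcanvases unexceptional, and Lemmas \ref{extraverts1}, \ref{extraverts2}, and Corollary \ref{notthird} guarantee the vertices $v_{k'+1},v_{k'+2},v_{k'+3}$ exist and that $A\cap\{v_{k'+1},v_{k'+2},v_{k'+3}\}\in\{\emptyset,\{v_{k'+2}\}\}$, which is exactly the dichotomy the proof resolves.
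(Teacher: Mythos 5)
Your overall shape matches the paper's: colour $P$ greedily with $\phi(v_{k'+1})$ avoiding the available colour at $v_{k'}$ and $\phi(v_j)\notin L(v_{(j\bmod q)+1})$, delete $P$, pass to $(G',L',S,A')$, and extract the claimed structure from the failure of that quadruple to be an unexceptional canvas. However, there are two genuine gaps in your reasoning. First, the claim that girth-$3$ interior vertices are ``adjacent to a bounded number of vertices of $P$'' is not what controls their lists: by Lemma~\ref{nogenwheels} such a vertex $v$ can be adjacent to an arbitrarily long fan $v_\ell,\dots,v_m$ on $P$. What saves the day is that the paper's $\phi$ is chosen to be a $2$-colouring of $P$ minus its first one or two vertices, so $v$ can lose at most three colours, and even then only if $v$ is adjacent to $v_{k'+1}$ or $v_{k'+2}$. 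Lemma~\ref{nobigfans} then pins this down to $v$ being adjacent to exactly $v_{k'+1},v_{k'+2},v_{k'+3}$, and this residual case is \emph{not} the $u_1u_2$ obstruction at all --- it is handled by a separate identification argument (contract $v_{k'+1}$ and $v_{k'+3}$ to $z$, delete $v_{k'+2}$, apply the minimality of $K$ via Lemma~\ref{closeafan}). Your proposal treats this and the $A'$-not-independent failure as one obstruction; they are distinct and the first needs its own treatment.

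Second, your derivation of $V(P)\cap A=\{v_{k'+2}\}$ --- ``if $V(P)\cap A=\emptyset$, choose $\phi$ with enough freedom to avoid any adjacent pair in $A'$'' --- is not how the conclusion is (or can easily be) obtained, and you do not supply the argument that such freedom suffices. The paper instead derives it directly from the structure: once you have adjacent $u_1,u_2\in A'\setminus A$ with $\g(u_i)\ge 5$ each adjacent to some $v_a,v_b\in V(P)$, Lemma~\ref{3-5-5-3} produces a vertex $x\in A$ adjacent to both $v_a$ and $v_b$; since $A\subseteq V(C)$ and $C$ is chordless, $x$ is the vertex between $v_a$ and $v_b$ on $C$, forcing $x\in V(P)$, hence $x=v_{k'+2}\in A$ and $\{v_a,v_b\}=\{v_{k'+1},v_{k'+3}\}$. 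This simultaneously gives both halves of the lemma without any choice-of-$\phi$ argument. I'd also note that you cite Lemmas~\ref{sep3cycle}--\ref{sep5cycle} and planarity to bound interior adjacencies to $P$, but the relevant tool for girth $\ge 4$ is Lemma~\ref{intg4s} (which you invoke later for the exceptional-type analysis but not here).
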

\begin{proof}
Suppose not. Let $\phi$ be an $L$-colouring of $P$ such that: $\phi(v_j) \not \in L(v_{(j \mod q)+1})$;  $\phi(v_{k'+1}) \in L(v_{k'+1}) \setminus \{c\}$, where $c$ is the available colour at $v_{k'}$; if $v_{k'+2} \not \in A$, then $\phi$ is a $2$-colouring of $P -v_{k'+1}$; and if $v_{k'+2} \in A$, then $\phi$ is a 2-colouring of $P-v_{k'+1}-v_{k'+2}$. Note that $\phi$ exists: by Observation \ref{listsizes} and the definition of deletable path we have that $L(v_{k'+3}) = L(v_{i})$ for all $i \in \{k'+4, \dots, j\}$, and if $v_{k'+2} \not \in A$, then $L(v_{k'+2}) = L(v_{k'+3})$ as well. By Observation \ref{listsizes} and Lemma \ref{colours} (1), the available colour at $v_{k'}$ is unique.

Let $G'$ be the graph obtained from $G$ by deleting $V(P)$, and let $L'$ be the list assignment for $G'$ obtained from $L$ by setting $L'(v_{k'}) = L(v_{k'})$ and $L'(v) = L(v) \setminus \{\phi(x) : x \in V(P)\cap N_G(v)\}$ for all $v \in V(G') \setminus \{v_{k'}\}$. Let $C'$ be the subgraph of $G'$ whose vertex- and edge-set are precisely those of the outer face boundary walk of $G'$, and let $A'$ be the set of vertices in $V(G') \setminus V(S)$ with lists of size at most two under $L'$. 

We now show the following.

\begin{claim}\label{canvas}
$(G', L', S, A')$ is a canvas.
\end{claim}
\begin{proof}
We proceed via a series of subclaims. Note that by our choice of $L'$, every vertex $v \in V(S)$ satisfies $|L'(v)| = 1$.  \vskip 3mm
\noindent
\textbf{Subclaim a)} \emph{Every vertex $v \in V(C') \setminus V(S)$ with $\g(v) = 3$ satisfies $|L'(v)| \geq 3$}.
\begin{proof} Suppose not. Note that if $v \in V(C) \setminus V(S)$, then since $C$ is chordless by Lemma \ref{chordless} and $\phi(v_j) \not \in L(v_{(j \mod q)+1})$ by our choice of $\phi$, it follows that $|L'(v)| \geq 3$, a contradiction. Thus we may assume $v \in V(C') \setminus V(C)$, and so that $v \in V(\Int(C))$. Since $\g(v) =3$, we have by the definition of canvas that $|L(v)| \geq 5$. Since $|L'(v)| < 3$, it follows that $v$ is adjacent to at least three vertices in $V(P)$; and in particular, since $\phi$ is a 2-colouring of $P - v_{k'+2}-v_{k'+1}$, that either $vv_{k'+1} \in E(G)$ or $vv_{k'+2} \in E(G)$. Let $\ell$ be the smallest index such that $vv_\ell \in E(G)$ and $\ell \in \{k'+1, k'+2\}$. Let $m$ be the largest index such that $v_m \in N_G(v) \cap V(P)$. Since $v$ neighbours at least three vertices in $P$, it follows that $m \geq \ell+2$. By Lemma \ref{nogenwheels}, since $v$ is adjacent to $v_{\ell}$ and $v_m$, we have that $v$ is also adjacent to $v_{\ell+1}, v_{\ell+2}, \dots, v_{m-1}$. Thus for each $i \in \{\ell, \dots, m\}$, we have that $\g(v_i) = 3$. Since $i \leq k'+2 < m$, it follows that $\g(v_{k'+2}) = 3$ and so that $v_{k'+2} \not \in A$. Thus by the definition of deletable path, $L(v_{k'+2}) = L(v_{k'+3})$. It then follows from Lemma \ref{nobigfans} that $\ell = k'+1$ and $m = k'+3$. 

Hence $v$ is adjacent to $v_{k'+1}$, $v_{k'+2}$, and $v_{k'+3}$. Note that by Lemma \ref{sep3cycle}, $V(\Int(vv_{k'+1}v_{k'+2}v)) = V(\Int(vv_{k'+2}v_{k'+3}v)) = \emptyset$. Let $G''$ be the graph obtained from $G$ by identifying $v_{k'+1}$ and $v_{k'+3}$ to a new vertex $z$ and deleting $v_{k'+2}$. Let $L(z) = L(v_{k'+1})$. Note that by Lemma \ref{colours} (2) and (3), we have that $L(v_{k'+1}) \setminus \{c\} \subseteq L(v_{k'+3})$, where $c$ is the available colour at $v_{k'}$. By Lemma \ref{closeafan}, for every vertex $x \in V(G'')$, if $\g_G(x) \geq 5$, then $g_{G''}(x) \geq 5$, and similarly if $\g_G(x) = 4$, then $g_{G''}(x) \geq 4$. It follows that $S$ is an acceptable path in $G''$. Note that $K'' = (G'', L, S, A)$ is a canvas; in particular, $|L(z)| = 3$. 

First suppose $K''$ is unexceptional. By the minimality of $K$, we have that $K''$ admits an $L$-colouring $\varphi$. Note that by definition of available colour, $\varphi(z) \neq c$, where $c$ is the available colour at $v_{k'}$. But then $\varphi(z) \in L(v_{k'+3})$, and so $\varphi$ extends to an $L$-colouring of $G$ by setting $\varphi(v_{k'+1}) = \varphi(v_{k'+3}) = \varphi(z)$ and $\varphi(v_{k'+2}) \in L(v_{k'+2}) \setminus \{\varphi(z), \varphi(v)\}$, a contradiction.

Thus we may assume that $K''$ is an exceptional canvas. Since $C$ is chordless by Lemma \ref{chordless}, $z \not \in A$, and $K$ is unexceptional, we have that $K''$ is not an exceptional canvas of type (i) or (ii). We may therefore assume that $K''$ is an exceptional canvas of type (iii), and thus that $G''$ contains a subgraph $W$ that is a generalized wheel with principal path $S$ such that the vertices on the outer cycle of $W$ are on the outer face boundary of $G''$ and all have lists of size at most three under $L$. Again because $C$ is chordless it follows that the outer cycle of $W$ is the outer cycle of $G''$, and that the outer cycle of $G''$ is also chordless. Since every generalized wheel that is neither a triangle nor a wheel has a chord in its outer cycle, it follows that $W$ is either a triangle or a wheel. Note that since $K$ is unexceptional, we have that $z \in V(W)$. Since $|V(W)| \geq |V(S)| + |\{z\}| = 4$, we have that $W$ is a wheel. It follows that every triangle in $G''$ corresponds to a triangle in $G$ (replacing $z$ by $v_{k'+1}$ or $v_{k'+3}$ where appropriate). By Lemma \ref{sep3cycle}, we have that $W = G''$; and since $v \in V(\Int(C))$, it follows further that $v$ is the only vertex in $W$ not in the outer cycle of $G''$. But then $G$ too is a wheel with principal path $S$, and since $|L(v_{k'+2})| = 3$ by Observation \ref{listsizes}, it follows that $K$ is an exceptional canvas of type (iii), a contradiction. Thus we may assume that every vertex in $V(C') \setminus V(S)$ of girth three has a list of size at least three under $L'$. 
\end{proof}

\noindent
\textbf{Subclaim b)} \emph{Every vertex $v \in V(C') \setminus V(S)$ with $\g(v) = 4$ satisfies $|L'(v)| \geq 3$}.
\begin{proof}
Suppose not. Let $v \in V(C')\setminus V(S)$ satisfy $|L'(v)| \leq 2$ and $\g(v) = 4$. Note that by our choice of $\phi$ and the fact that $C$ is chordless by Lemma \ref{chordless}, we have that $L'(v) = L(v)$ for all $v \in V(C)$. Thus we may assume that $v \in V(C') \setminus V(C)$, and so that $v \in V(\Int(C))$. Thus $|L(v)| \geq 4$. By Lemma \ref{intg4s}, since $\g(v) =4$ we have that $v$ is adjacent to at most one vertex in $V(P)$  and thus since $|L(v)| \geq 4$, it follows that $|L'(v)| \geq 3$, a contradiction. Thus we may assume that every vertex in $V(C') \setminus V(S)$ of girth four has a list of size at least three under $L'$.
\end{proof}

\noindent
\textbf{Subclaim c)} \emph{Every vertex $v \in A'$ satisfies $\g(v) \geq 5$ and $|L'(v)| = 2$.}
\begin{proof}
It follows from Subclaims a) and b) that every vertex in $A'$ has girth at least five. Suppose now that there exits a vertex $v \in A'$ with $|L'(v)| \leq 1$. Note that if $v \in V(C)$, then by our choice of $\phi$ and the fact that $C$ is chordless by Lemma \ref{chordless} it follows that $L(v) = L'(v)$. Thus we may assume that $v \in V(C')\setminus V(C)$, and so that $v \in V(\Int(C))$. By Lemma \ref{intg4s}, since $\g(v) \geq 5$ we have that $v$ is adjacent to only one vertex in $V(P)$ and so that $|L'(v)| \geq 2$, a contradiction. Thus we may assume that every vertex in $V(C') \setminus V(S)$ of girth at least five has a list of size at least two under $L'$.
\end{proof}

Given Subclaims a), b), and c), to prove that $K'$ is a canvas it remains only to show that that $A'$ is an independent set. To see this, suppose not. Then there exist distinct vertices $u_1, u_2 \in A'$ such that $u_1u_2 \in E(G)$. (Note that $A' \cap V(P) = \emptyset$, and so that in particular $v_{k'+2} \not \in \{u_1, u_2\}$.) Since $C$ is chordless, $\{u_1, u_2\} \not \subseteq A$. Suppose that exactly one of $u_1$ and $u_2$ is in $A$. This contradicts Lemma \ref{intg4s}, since $\g(u_i) \geq 5$ for each $i \in \{1,2\}$ and every vertex in $A' \setminus A$ is adjacent to a vertex in $V(P)$. Thus we may assume that $\{u_1, u_2\} \subseteq A' \setminus A$, and so that $\{u_1, u_2\} \subseteq V(\Int(C))$. Recall that by the definition of deletable path, either $V(P) \cap A = \emptyset$ or $V(P) \cap A = \{v_{k'+2}\}$. Since $\g(u_1) \geq 5$ and $\g(u_2) \geq 5$ and each of $u_1$ and $u_2$ is adjacent to a vertex in $V(P)$, it follows from Lemma \ref{3-5-5-3} that $V(P) \cap A = \{v_{k'+2}\}$, and that, up to relabelling $u_1$ and $u_2$, both $u_1v_{k'+1} \in E(G)$ and $u_2v_{k'+3} \in E(G)$. Thus the hypotheses of Lemma \ref{notnone} hold, a contradiction.
\end{proof}

Thus $K' = (G', L', S, A')$ is a canvas by Claim \ref{canvas}. First suppose $K'$ is unexceptional. Then by the minimality of $K$ we have that $K'$ admits an $L'$-colouring $\phi'$. But then $\phi \cup \phi'$ is an $L$-colouring of $G$, a contradiction. We may therefore assume that $K'$ is an exceptional canvas.

Suppose next that $K'$ is an exceptional canvas of type (i). Then there exists a vertex $u \in A'$ adjacent to $v_1$ and $v_4$. Since $K$ is unexceptional, $u \not \in A$. Thus $u \in A'\setminus A$, and so $u \in V(\Int(C))$. But then $v_1uv_4$ contradicts Lemma \ref{intg4s}, since $\g(u) \geq 5$. 

Next suppose that $K'$ is exceptional of type (ii). Then $|V(S)| = 4$, and moreover $G'$ contains a vertex $w \in V(G')\setminus V(S)$ of girth three with $|L'(w)| = 3$ adjacent to one of $v_2$ and $v_3$, and a vertex $u \in A'$ adjacent to $w$ and one of $v_4$ and $v_1$. Since $|L'(w)| = 3$ and $C$ is chordless by Lemma \ref{chordless}, it follows that $w \in V(C')\setminus V(C)$, and so that $w$ is adjacent in $G$ to at least two vertices on $P$. If $u \in A$, then $w$ contradicts Lemma \ref{intg4s} since $w$ is adjacent to $u$ and a vertex on $P$ and $\g(u) \geq 5$. Thus we may assume $u \in A'\setminus A$. But then since $u$ is adjacent to one of $v_1$ and $v_4$, again this contradicts Lemma \ref{intg4s}, since every vertex in $A'\setminus A$ is adjacent to a vertex on $P$ and $\g(u) \geq 5$. 

Thus we may assume $K'$ is an exceptional canvas of type (iii). Then $k=3$, and $S$ contains only vertices of girth three. Moreover, $G'$ contains a generalized wheel $W$ such that the vertices on the outer cycle of $W$ are on the outer cycle of $G'$ and have lists of size at most three under $L'$. Let $v_1v_2v_3w_1 \dots w_tv_1$ be the outer cycle of $W$. Since $|L'(w_1)| = 3$, it follows that $|L(w_1)| \geq 3$. Thus $w_1 \not \in A$. Moreover, $w_1 \neq v_{k'+1}$ since $v_{k'+1} \not \in V(G')$. It follows that $w_1 \neq v_4$, and so since $C$ is chordless by Lemma \ref{chordless} we have that $w_1 \in V(\Int(C))$. Since $\g(w_1) = 3$, we have further that $|L(w_1)| \geq 5$; and since $|L'(w_1)| =3$, we have that $w_1$ is adjacent to at least two vertices in $V(P)$. Since $w_1$ is also adjacent to $v_3$, it follows from Lemma \ref{nogenwheels} that $w$ is adjacent to $v_{4}$ and $v_5$ in $G$. This contradicts Lemma \ref{deg3forv4}. 
\end{proof}

Following Lemma \ref{P-structure-i}, our final lemma \textemdash Lemma \ref{notctx}\textemdash will also involve colouring and deleting $P$, restricting lists where appropriate, and arguing that what remains is an unexceptional canvas. Lemma \ref{P-structure-i} shows that in doing so, the vertices in $\Int(C)$ lose at most one colour from their list.

\begin{lemma}\label{P-structure-i}
There does not exist a vertex $v \in V(\Int(C))$ such that $v$ is adjacent to two vertices in $V(P)$ at distance at least two in $P$.
\end{lemma}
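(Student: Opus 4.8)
The plan is to argue by contradiction. Suppose there is a vertex $v \in V(\Int(C))$ adjacent to two vertices $v_a, v_b$ of $P$ with, say, $a < b$ and $b \ge a+2$. Since $P$ avoids $V(S)$ we have $a, b \ge k'+1 > k$, so in particular $\{a,b\} \cap \{2, \dots, k-1\} = \emptyset$, and the arc $Q := v_a v_{a+1} \cdots v_b$ of $P$ is exactly the $v_a$--$v_b$ path in $C$ containing no edge of $S$ (the complementary arc of $C$ runs through $v_1, \dots, v_k$ and so, as $k \ge 3$, through edges of $S$). The first step is to apply Lemma \ref{nogenwheels} to the separating path $v_a v v_b$: it gives a broken wheel $W \subseteq G$ with principal path $v_a v v_b$ such that $Q$ lies on the outer cycle of $W$. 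Because $b \ge a+2$, $W$ is not a triangle, and since in any broken wheel the middle vertex of the principal path has exactly its two principal neighbours on the rim cycle, the outer cycle of $W$ must be precisely $v\,v_a v_{a+1} \cdots v_b\,v$, with hub $v$ adjacent to each of $v_a, v_{a+1}, \dots, v_b$. In particular $V(W) = \{v\} \cup \{v_a, \dots, v_b\}$ and every vertex of $W$ has girth three.

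Recall from Lemma \ref{notnone} that $V(P) \cap A = \{v_{k'+2}\}$. The next step is to split on the location of $a$. If $a \in \{k'+1, k'+2\}$, then since $b \ge a+2 \ge k'+3$ the vertex $v_{k'+2}$ belongs to $W$ — it is a rim vertex when $a = k'+1$, and it is $v_a$ itself when $a = k'+2$ — so $\g(v_{k'+2}) = 3$, contradicting $v_{k'+2} \in A$.

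Hence $a \ge k'+3$, so all of $v_a, v_{a+1}, \dots, v_b$ have index in $\{k'+3, \dots, j\}$. By Lemma \ref{notnone} none of these vertices lies in $A$, so by Observation \ref{listsizes} each has a list of size exactly three; and condition (3) in the definition of a deletable path yields $L(v_a) \subseteq L(v_{a+1}) \subseteq \cdots \subseteq L(v_b)$. A nested chain of finite sets all of the same size is constant, so in particular $L(v_a) = L(v_{a+2})$ (note $a+2 \le b$). But $W$ is a broken wheel with hub $v \in V(\Int(C))$, rim contained in $V(C) \setminus V(S)$, and $t = b - a + 1 \ge 3$ rim vertices, so Lemma \ref{nobigfans}, applied with rim index $\ell = 1$, forbids $L(v_a) = L(v_{a+2})$ — a contradiction, which completes the proof.

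I expect the only point requiring real care to be the structural extraction from Lemma \ref{nogenwheels}: namely that the broken wheel it produces has rim exactly $v_a, v_{a+1}, \dots, v_b$ (no extra vertices) and hub $v$. This is what forces $v_{k'+2}$ into $W$ in the first case, and it is what makes $W$ large enough — $t \ge 3$ rim vertices, all on $C \setminus S$ — to invoke Lemma \ref{nobigfans} in the second case. It follows from the definition of a broken wheel (the hub is joined to every rim vertex, and its only two neighbours on the rim cycle are the endpoints of the principal path), together with the fact that $Q$ has length $b - a \ge 2$ and so cannot coincide with the length-two arc $v_a v v_b$ through the hub. Everything else is routine bookkeeping with the deletable-path conditions and Observation \ref{listsizes}.
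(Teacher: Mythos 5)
Your proof is correct and follows essentially the same route as the paper's: apply Lemma \ref{nogenwheels} to the separating path through $v$ to extract a broken wheel whose rim is a segment of $P$, use $v_{k'+2}\in A$ (from Lemma \ref{notnone}) to force the rim to start at index at least $k'+3$, and then use the nested-equal-size lists from the deletable-path condition and Observation \ref{listsizes} to violate Lemma \ref{nobigfans}. The paper chooses the extremal indices $i$ (smallest) and $m$ (largest) among $N_G(v)\cap V(P)$ rather than an arbitrary non-adjacent pair $v_a,v_b$, but since Lemma \ref{nogenwheels} already forces $v$ to be adjacent to the entire segment between the two chosen endpoints, both choices yield the same broken wheel and the same contradiction; your version is just slightly more spelled out (explicit two-case split on $a$, and the explicit observation that a nested chain of equal-size sets is constant, which the paper leaves implicit in its line ``$L(v_m)=L(v_i)$\dots contradicts Lemma \ref{nobigfans}'').
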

\begin{proof}
Suppose not, and let $v$ be a vertex adjacent to two vertices of $P$ at distance at least two. Let $i$ be the smallest index with $k'+1 \leq i \leq j-2$ such that $v_i \in N_G(v) \cap V(P)$. Let $m$ be the largest index with $k'+3 \leq m \leq j$ such that $v_m \in N_G(v) \cap V(P)$.  By Lemma \ref{nogenwheels}, $v$ is adjacent to every vertex in $\{v_i, v_{i+1}, \dots, v_m\}$: that is, $vv_iv_{i+1}\dots v_mv$ is the outer cycle of a broken wheel with principal path $v_mvv_i$. By Lemma \ref{notnone}, $V(P) \cap A = \{v_{k'+2}\}$. Since $\g(v_{k'+2}) \geq 5$, it follows that $i \geq k'+3$. By the definition of $P$, we have that $L(v_m) = L(v_i)$. Since $m \geq i+2$, this contradicts Lemma \ref{nobigfans}.
\end{proof}

\begin{figure}[ht]
\begin{center}
\hskip 6mm
\begin{tikzpicture}[scale=0.80]
   \newdimen\R
   \R=6cm
   \draw (0:\R) \foreach \x in {0,25,50,75,100, 120} { -- (\x:\R) };
   \foreach \x/\l/\p in
     { 25/{$v_{k'+3}$}/right,
      75/{$v_{k'+1}$}/right
     }
     \node[inner sep=1pt, minimum size=7pt, circle,draw,fill=white,label={\p:\l}] (\x) at (\x:\R) {};
   \node[inner sep=1pt, minimum size=7pt, circle, draw, fill, label={above:$v_{k'}$}] (100) at (100:\R) {}; 
   \node[inner sep=1pt, minimum size=7pt, star,star points=4,star point ratio=0.5, draw, fill=white, label={right:$v_{k'+2}$}] (50) at (50:\R) {}; 
   \newdimen\r
   \r=3cm
   \foreach \x/\l/\p in
     { 15/{$u_{2}$}/right,
      85/{$u_{1}$}/right
     }
     \node[inner sep=1pt, minimum size=7pt, circle,draw,fill=white,label={\p:\l}] (\x) at (\x:\r) {};
     \draw (15)--(25);
     \draw (15)--(85);
     \draw (85)--(75);
\end{tikzpicture}
\hskip 6mm
\begin{tikzpicture}[scale=0.80]
   \newdimen\R
   \R=6cm
   \draw (0:\R) \foreach \x in {0,20,40,60,80,100,120} {  -- (\x:\R) };
   \foreach \x/\l/\p in
     { 20/{$v_{k'+3}$}/right,
      60/{$v_{k'+1}$}/above
     }
     \node[inner sep=1pt, minimum size=7pt, circle, draw, fill=white, label={\p:\l}] (\x) at (\x:\R) {};
   \foreach \x/\l/\p in
     { 40/{$v_{k'+2}$}/right,
      80/{$v_{k'}$}/above
     }
     \node[inner sep=1pt, minimum size=7pt, star,star points=4,star point ratio=0.5, draw, fill=white, label={\p:\l}] (\x) at (\x:\R) {};
   \node[inner sep=1pt, minimum size=7pt, circle, draw, fill, label={above:$v_{k}$}] (100) at (100:\R) {}; 
   \newdimen\r
   \r=3cm
   \foreach \x/\l/\p in
     { 10/{$u_{2}$}/right,
      70/{$u_{1}$}/right
     }
     \node[inner sep=1pt, minimum size=7pt, circle,draw,fill=white,label={\p:\l}] (\x) at (\x:\r) {};
     \draw (10)--(20);
     \draw (10)--(70);
     \draw (70)--(60);
\end{tikzpicture}
\caption{The final cases considered in Lemma \ref{notctx}. Vertices in $S$ are black; vertices in $A$ are drawn as four-pointed stars. On the left, $v_{k+1} \not \in A$ and so $k = k'$. On the right, since $v_{k+1} \in A$, it follows that $k' = k+1$. Recall that by definition, the deletable path $P$ begins at $v_{k'+1}$; recall moreover that by Lemma \ref{sep5cycle}, since $\g(u_1)=5$ it follows that the 5-cycles in the figure have no vertices in their interior.}
    \label{fig:final_arg}
\end{center}
\end{figure}
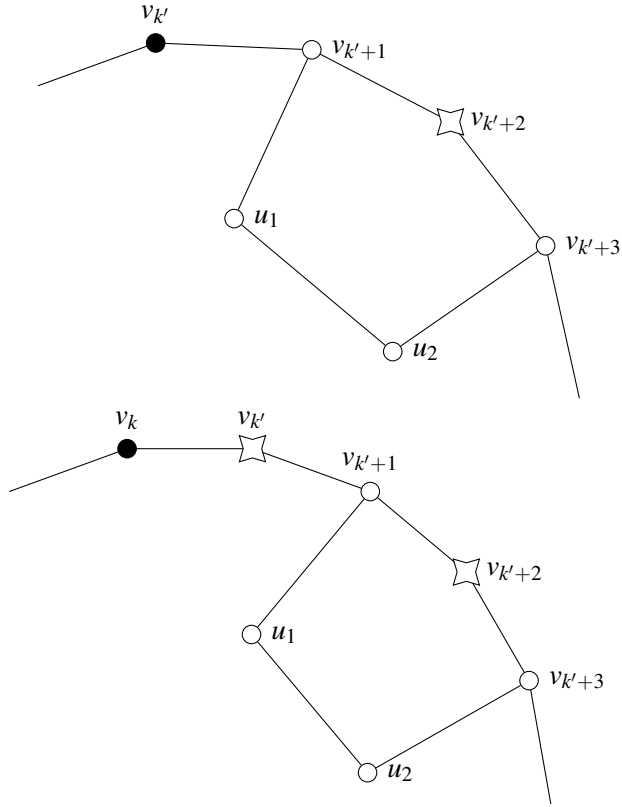

The following lemma concludes the proof of Theorem \ref{345colouring} (and moreover the paper). See Figure \ref{fig:final_arg} for an illustration of the cases considered in Lemma \ref{notctx}.
\begin{lemma}\label{notctx}
$K$ is not a counterexample to Theorem \ref{345colouring}.
\end{lemma}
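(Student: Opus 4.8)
The plan is to colour and delete the deletable path $P$ together with the two interior vertices $u_1,u_2$ furnished by Lemma~\ref{notnone}, and to show that what remains is an unexceptional canvas; then the minimality of $K$ produces an $L$-colouring of the remainder which extends to all of $G$, contradicting the choice of $K$. I would begin by recording local structure. Since $\g(u_1)\ge 5$, Lemma~\ref{sep5cycle} applied to the $5$-cycle $v_{k'+1}v_{k'+2}v_{k'+3}u_2u_1$ shows it has empty interior; as $\g(v_{k'+2})\ge 5$ forbids $v_{k'+2}$ from lying in a triangle, this gives $\deg_G(v_{k'+2})=2$ with $N(v_{k'+2})=\{v_{k'+1},v_{k'+3}\}$. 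Moreover, if $u_i$ had a neighbour on $C$ other than $v_{k'+1}$, resp.\ $v_{k'+3}$, then Lemma~\ref{nogenwheels} would force $u_i$ adjacent to all vertices of a $C$-arc joining its two $C$-neighbours, in particular to $v_{k'+2}$, again contradicting $\g(u_i)\ge 5$; so $u_1,u_2$ have no neighbour in $V(C)\setminus V(P)$, and $u_1\not\sim v_{k'+3}$, $u_2\not\sim v_{k'+1}$. Finally recall $v_{k'+3}\notin A$ (Corollary~\ref{notthird}), so $|L(v_{k'+3})|=3$ and $L(v_{k'+3})=\cdots=L(v_j)$ by the definition of a deletable path.

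Next I would fix an $L$-colouring $\phi$ of $(V(P)\setminus\{v_{k'+2}\})\cup\{u_1,u_2\}$ extending the precolouring on $V(S)$ and satisfying: $\phi(v_j)\notin L(v_{(j\bmod q)+1})$; $\phi(v_{k'+1})\notin\{c\}\cup L(v_{k'})$, where $c$ is the available colour at $v_{k'}$ (possible since $|L(v_{k'+1})|=3$, and when $k'=k$ this set is just $\{c\}$); $\phi(v_{k'+3})=\phi(v_{k'+1})$ and $\phi$ uses only two colours on $v_{k'+3}\cdots v_j$; and $\phi(u_1),\phi(u_2)$ chosen so as to repeat, where possible, colours already used on the other deleted neighbours of interior vertices. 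Existence follows from Lemma~\ref{colours}: parts (2) and (3) give $L(v_{k'+1})\setminus\{c\}\subseteq L(v_{k'+2})\subseteq L(v_{k'+3})$, so the value chosen at $v_{k'+1}$ is legal at $v_{k'+3}$, and the common size-$3$ list of $v_{k'+3},\dots,v_j$ admits a two-colouring with this endpoint value and with $\phi(v_j)\notin L(v_{(j\bmod q)+1})$, using $L(v_j)\not\subseteq L(v_{(j\bmod q)+1})$. Set $G':=G-(V(P)\cup\{u_1,u_2\})$, let $L'$ be obtained from $L$ by deleting, from the list of each surviving vertex, the $\phi$-colours of its deleted neighbours, and let $A'$ be the set of vertices of $V(G')\setminus V(S)$ with $|L'|\le 2$. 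Since $\deg_G(v_{k'+2})=2$ and $\phi(v_{k'+1})=\phi(v_{k'+3})$, every $L'$-colouring of $G'$ extends (by $\phi$ on the deleted vertices, then by colouring $v_{k'+2}$) to an $L$-colouring of $G$, so it suffices to show $K':=(G',L',S,A')$ is an unexceptional canvas and invoke the minimality of $K$.

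This last step is the heart of the argument. For vertices of $C$: by Lemma~\ref{chordless} their only possible deleted $C$-neighbours are $v_{k'}$ and $v_{(j\bmod q)+1}$, both handled by the choice of $\phi$, and by the first paragraph they have no neighbour among $u_1,u_2$; so they keep lists of the required size and $A'\cap V(C)\subseteq A$. For $w\in V(\Int(C))$: Lemma~\ref{P-structure-i} shows the neighbours of $w$ in $P$ span at most one edge, while $w$ can be adjacent to at most one of $u_1,u_2$ (else $wu_1u_2$ is a triangle, contradicting $\g(u_1)\ge 5$); and if $w$ is adjacent to $u_1$ or $u_2$ and also to a vertex of $P$, one derives a forbidden short cycle using $u_1u_2\in E(G)$, the edges $u_1v_{k'+1}$ and $u_2v_{k'+3}$, $\deg(v_{k'+2})=2$, and Lemmas~\ref{sep4cycle}, \ref{sep5cycle}, \ref{sep6cycle}, \ref{nogenwheels}, \ref{nobigfans}. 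Hence $w$ loses at most two colours from its list (at most one if $\g(w)\ge 4$, by Lemma~\ref{intg4s}), which is enough since girth-$3$ interior vertices have lists of size $\ge 5$ and girth-$4$ ones of size $\ge 4$. That $A'$ is independent with no edge to $A$ follows, as in the proof of Lemma~\ref{notnone}, from Lemmas~\ref{intg4s} and \ref{3-5-5-3}. Finally $K'$ is unexceptional: it is not of type (i) or (ii) since an offending vertex of $A'$ would lie in $V(\Int(C))$ with girth $\ge 5$, contradicting Lemma~\ref{intg4s}, and an offending girth-$3$ vertex (having $|L'|=3$ yet $|L|\ge 5$) would be adjacent to two vertices of $P$ and hence, by Lemma~\ref{nogenwheels}, to $v_3,v_4,v_5$, contradicting Lemma~\ref{deg3forv4}; it is not of type (iii) for the same reason (this would force $k=3$ with $S$ of girth $3$ and the identical forbidden configuration). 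This yields an $L'$-colouring of $G'$ and hence an $L$-colouring of $G$, the desired contradiction.

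I expect the canvas claim — and within it the case of an interior vertex adjacent both to vertices of $P$ and to one of $u_1,u_2$ — to be the main obstacle: this is precisely where the girth-$\ge 5$ hypothesis on $u_1,u_2$ must be converted into a short forbidden cycle, and where the separating-cycle lemmas (Lemmas~\ref{sep4cycle}, \ref{sep5cycle}, \ref{sep6cycle}) and the broken-wheel lemmas (Lemmas~\ref{nogenwheels}, \ref{nobigfans}) are all brought to bear simultaneously. A secondary nuisance is the bookkeeping around $v_{k'}$ when $k'=k+1$ (so $v_{k'}\in A$), which is why the choice of $\phi(v_{k'+1})$ must avoid all of $\{c\}\cup L(v_{k'})$ rather than merely $c$.
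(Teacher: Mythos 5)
Your overall strategy is the same as the paper's (colour and delete $P\cup\{u_1,u_2\}$ with $\phi(v_j)\notin L(v_{(j\bmod q)+1})$ and $\phi(v_{k'+1})$ avoiding the available colour, then show the remainder is an unexceptional canvas), but the proposal leaves exactly the hard parts unproved, and the tools you cite cannot close them. The key claim you wave at \textemdash{} that an interior vertex $w$ adjacent both to a vertex of $P$ and to one of $u_1,u_2$ yields ``a forbidden short cycle'' via Lemmas \ref{sep4cycle}--\ref{sep6cycle}, \ref{nogenwheels}, \ref{nobigfans} \textemdash{} is false in general: for instance $w\sim u_2$ and $w\sim v_i$ with $i\ge k'+5$ creates only cycles of length at least five through $u_2$ (and for $i=k'+5$ a $5$-cycle which Lemma \ref{sep5cycle} merely forces to have empty interior), so no girth or separating-cycle lemma is violated. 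The paper's Claim \ref{notadjto2} rules this configuration out not by a short cycle but by a fresh separating-path argument: the acceptable path $v_{k'+1}u_1wv_i$ (resp.\ $v_{k'+3}u_2wv_i$) splits $G$, one side is coloured by minimality, and the exceptional types for the other side are eliminated using $A\cap V(G_2)=\{v_{k'+2}\}$, $N(v_{k'+2})=\{v_{k'+1},v_{k'+3}\}$ and the girth of $u_2$. Without this (or an equivalent inductive argument) your verification that interior vertices lose at most the allowed number of colours, and your dismissal of exceptional type (ii) for $K'$, both collapse.

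The same omission recurs in two further places. First, independence of $A'$ does \emph{not} ``follow as in Lemma \ref{notnone} from Lemmas \ref{intg4s} and \ref{3-5-5-3}'': the new phenomenon here is an $A'$-vertex whose lost colour comes from $u_1$ or $u_2$ rather than from $P$, and since $u_1,u_2$ are interior vertices neither lemma applies to it. This is precisely the content of the paper's Claim \ref{5sfine}, whose Cases 1 and 2 require substantial extra work (deleting $v_{k'+1},v_{k'+2}$ and analysing a new canvas when $i\le k'+4$; a five-vertex separating path $v_{k'+1}u_1vwv_i$ with a two-stage colouring of $H_2$ when $i\ge k'+5$). Second, in ruling out type (ii) for $K'$ the offending vertex $u\in A'$ may again be adjacent to $u_1$ or $u_2$ rather than to $P$, so Lemma \ref{intg4s} does not apply; the paper's Claim \ref{unexceptional} needs another separating-path induction (the path $v_4uwv_i$) together with Lemma \ref{3-5-5-3}. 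A smaller but real flaw: your preliminary assertion that $u_1,u_2$ have no neighbour in $V(C)\setminus V(P)$ is not justified by Lemma \ref{nogenwheels}, whose hypothesis excludes internal vertices of $S$; indeed the paper allows one of $u_1,u_2$ to have a neighbour in $V(S)$ and orders the colouring of $u_1,u_2$ in Claim \ref{colexists} accordingly, a point your under-specified choice of $\phi(u_1),\phi(u_2)$ (lists possibly of size exactly three, with up to three coloured neighbours) does not address.
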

\begin{proof}
Suppose not. By Corollary \ref{notthird}, $G$ contains a deletable path $P$, and by Lemma \ref{notnone}, $V(P) \cap A = \{v_{k'+2}\}$ and there exists an edge $u_1u_2$ in $E(\Int(C))$ such that both $u_1$ and $u_2$ have girth at least five, and $\{u_1v_{k'+1}, u_2v_{k'+3}\} \subset E(G)$. By Lemma \ref{sep5cycle}, we have that $\Int(u_1u_2v_{k'+3}v_{k'+2}v_{k'+1}u_1) = \emptyset$, and so that $N_G(v_{k'+2}) = \{v_{k'+1}, v_{k'+3}\}$.
We first show the following. 

\begin{claim}\label{colexists}
There exists an $L$-colouring $\phi$  of $G[\{u_1, u_2\} \cup V(P) \cup V(S)]$ such that:
\begin{itemize}
    \item $\phi(v_{j}) \not \in L(v_{(j \mod q)+1})$, and
    \item $\phi(v_{k'+1})$ is not the available colour at $v_{k'}$.
\end{itemize}
\end{claim}
\begin{proof}
Note that by Lemma \ref{intg4s}, since $\g(u_1) \geq 5$ it follows that $v_{k'+1}$ is the unique neighbour of $u_1$ in $V(P) \cup \{v_1, v_k\}$. Similarly, since $\g(u_2) \geq 5$, we have that $v_{k'+3}$ is the unique neighbour of $u_2$ in $V(P) \cup \{v_1, v_k\}$. Since each of $u_1$ and $u_2$ have girth at least five, it follows that at most one of $u_1$ and $u_2$ has a neighbour in $V(S)$.

Since $|L(v_{k'+1})| = 3$ and $C$ is chordless by Lemma \ref{chordless}, we have that $G[V(P) \cup V(S)]$ admits an $L$-colouring $\phi$ as described in the statement of the claim (by first colouring $S$ and then the vertices of $P$ in decreasing order of index). Moreover, as argued above there exists $i \in \{1,2\}$ such that $u_i$ has degree at most two in $H = G[\{u_1, u_2\} \cup V(P) \cup V(S)]$. After colouring $G[ V(P) \cup V(S)]$, we then colour $u_{3-i}$ with a colour $\phi(u_{3-i}) \in L(u_{3-i})\setminus \{ \phi(x): x \in N_H(u_{3-i})\}$. Note that $u_{3-i}$ has at most two neighbours in $H-u_i$, and so since $|L(u_{3-i})|\geq 3$, we have that $\phi(u_{3-i})$ exists. Finally, we colour $u_{i}$ with a colour in $L(u_i) \setminus\{ \phi(x): x \in N_H(u_{i})\}$. Since $u_i$ has at most two neighbours in $H$ and $|L(u_i)| \geq 3$, this is possible.
\end{proof}

Let $\phi$ be as in the statement of Claim \ref{colexists}, and let $G'$ be the graph obtained from $G$ by deleting $V(P) \cup \{u_1, u_2\}$. Note that by Lemma \ref{sep5cycle} and the fact that $\g(u_1) \geq 5$, we have that $\Int(v_{k'+1}v_{k'+2}v_{k'+3}u_2u_1v_{k'+1}) = \emptyset$. Let $C'$ be the graph whose vertex- and edge-set are precisely those of the outer face boundary of $G'$. Let $L'$ be the list assignment obtained from $L$ by setting $L'(v_{k'}) = L(v_{k'})$ and $L'(v) = L(v) \setminus \{\phi(x) : x \in (V(P)\cup \{u_1, u_2\}) \cap N(v)\}$ for all $v \in V(G') \setminus \{v_{k'}\}$. Let $A'$ be the set of vertices in $V(G') \setminus V(S)$ with lists of size at most two under $L'$.

Claims \ref{Csfine} and \ref{notadjto2} will be used repeatedly to argue that $(G', L', S, A')$ is a canvas.
\begin{claim}\label{Csfine}
Every vertex $v \in V(C')$ satisfies $L(v) = L'(v)$.
\end{claim}
\begin{proof}
Let $v \in V(C')$. If $v \in V(S)$, then $L(v) = L'(v)$ since $\phi$ is a colouring of $G[\{u_1, u_2\} \cup V(P) \cup V(S)]$. Thus we may assume $v \in V(C')\setminus V(S)$. Note that that since $u_1v_{k'+1} \in E(G)$ and $\g(u_1) \geq 5$, it follows from Lemma \ref{intg4s} that $u_1$ is not adjacent to $v$. Similarly, $u_2$ is not adjacent to $v$. By Lemma \ref{chordless}, $C$ is chordless and so no internal vertex of $P$ is adjacent to $v$. Moreover, note that $L'(v_{k'}) = L(v_{k'})$ by definition of $L'$. Thus we may assume that $v = v_{(j \mod q) + 1}$, as otherwise $L(v) = L'(v)$. By Claim \ref{colexists}, $\phi(v_j) \not \in L(v_{(j \mod q) + 1})$, and so $L(v_{(j \mod q) + 1}) = L'(v_{(j \mod q) + 1})$, as desired.
\end{proof}

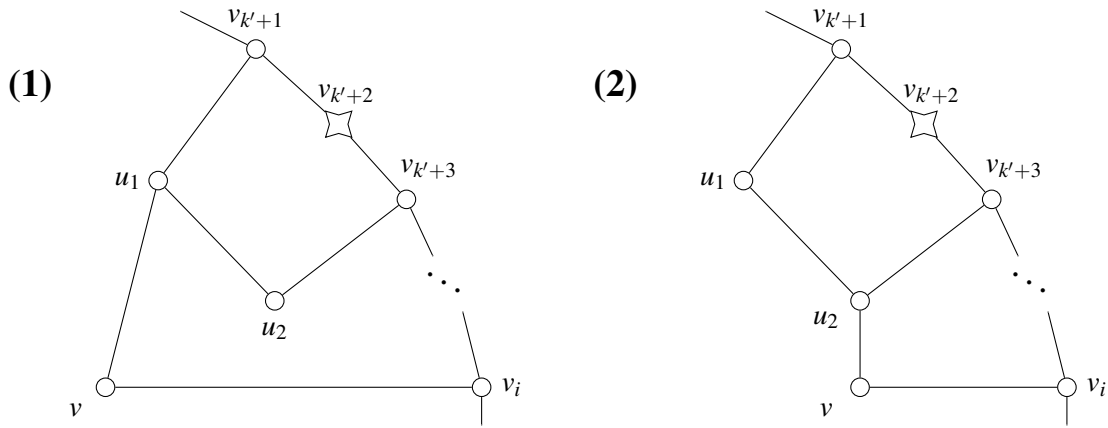
\begin{figure}[ht]
\tikzset{white/.style={shape=circle,draw=black,fill=white,inner sep=1pt, minimum size=7pt}}
\tikzset{starnode/.style={inner sep=1pt, minimum size=7pt, star,star points=4,star point ratio=0.5, draw, fill=white}}
\tikzset{invisible/.style={shape=circle,draw=black,fill=black,inner sep=0pt, minimum size=0.1pt}}

\begin{center}
\hskip 6mm
\begin{tikzpicture}
        \node[]  at (4,8.5){\Large{\textbf{(1)}}};
        \node[white] (1) at (7,9){};
        \node[] at (7, 9.4) {$v_{k'+1}$};
        \node[starnode] (2) at (8.1,8){};
        \node[]  at (8.2, 8.4){$v_{k'+2}$};
        \node[white] (3) at (9,7){};     
        \node[]  at (9.3, 7.4){$v_{k'+3}$};
        \node[invisible] (11) at (9.35, 6.25){};
        \node[]  at (9.5, 6){\Large{$\ddots$}};
        \node[invisible] (12) at (9.75, 5.5){};
        \node[white] (4) at (10,4.5){};
        \node[]  at (10.4, 4.5){$v_i$};
        \node[white] (6) at (7.25,5.65){}; 
        \node[]  at (7.25,5.25){$u_2$};
        \node[white] (7) at (5.7,7.25){}; 
        \node[]  at (5.3, 7.25){$u_1$};
        \node[white] (8) at (5,4.5){};  
        \node[]  at (4.6,4.2){$v$};
        \node[invisible] (9) at (6,9.5){};                
        \node[invisible] (10) at (10,4){};                
        
        \draw[black] (1)--(2); 
        \draw[black] (2)--(3);
        \draw[black] (3)--(11);
        \draw[black] (4)--(12);   
        \draw[black] (4)--(10); 
        \draw[black] (1)--(7);
        \draw[black] (1)--(9);
        \draw[black] (6)--(7);
        \draw[black] (6)--(3);  
        \draw[black] (8)--(7);   
        \draw[black] (8)--(4);

\end{tikzpicture}
\hskip 6mm
\begin{tikzpicture}
        \node[]  at (4,8.5){\Large{\textbf{(2)}}};
        \node[white] (1) at (7,9){};
        \node[] at (7, 9.4) {$v_{k'+1}$};
        \node[starnode] (2) at (8.1,8){};
        \node[]  at (8.2, 8.4){$v_{k'+2}$};
        \node[white] (3) at (9,7){};     
        \node[]  at (9.3, 7.4){$v_{k'+3}$};
        \node[invisible] (11) at (9.35, 6.25){};
        \node[]  at (9.5, 6){\Large{$\ddots$}};
        \node[invisible] (12) at (9.75, 5.5){};
        \node[white] (4) at (10,4.5){};
        \node[]  at (10.4, 4.5){$v_i$};
        \node[white] (6) at (7.25,5.65){}; 
        \node[]  at (6.8,5.4){$u_2$};
        \node[white] (7) at (5.7,7.25){}; 
        \node[]  at (5.3, 7.25){$u_1$};
        \node[white] (8) at (7.25,4.5){};  
        \node[]  at (6.8,4.2){$v$};
        \node[invisible] (9) at (6,9.5){};                
        \node[invisible] (10) at (10,4){};                
        
        \draw[black] (1)--(2); 
        \draw[black] (2)--(3);
        \draw[black] (3)--(11);
        \draw[black] (4)--(12);   
        \draw[black] (4)--(10); 
        \draw[black] (1)--(7);
        \draw[black] (1)--(9);
        \draw[black] (6)--(7);
        \draw[black] (6)--(3);  
        \draw[black] (8)--(6);   
        \draw[black] (8)--(4);

\end{tikzpicture}
\caption{Cases considered in Claim \ref{notadjto2}. Vertices in $A$ are drawn as four-pointed stars. Both $u_1$ and $u_2$ have girth five. Recall that by Lemma \ref{sep5cycle}, since $\g(u_1)=5$ it follows that the 5-cycles in all figures have no vertices in their interior.}
    \label{fig:45sfine}
\end{center}
\end{figure}

\begin{claim}\label{notadjto2}
There does not exist a vertex in $V(\Int(C))  \setminus \{u_1, u_2\}$ adjacent to a vertex in $V(P)$ and a vertex in  $\{u_1, u_2\}$.
\end{claim}
\begin{proof}
Suppose not, and let $v \in V(\Int(C)) \setminus \{u_1, u_2\}$ be a counterexample. Let $i$ be an index such that $k'+1 \leq i \leq j$ and $v$ is adjacent to $v_i$. Since $\g(u_2) \geq 5$ and $v$ is adjacent to one of $u_1$ and $u_2$, it follows that $i \geq k'+4$. See Figure \ref{fig:45sfine} for an illustration of the cases considered in this claim.

First suppose $v$ is adjacent to $u_1$. In this case, the path $v_{k'+1}u_1vv_i$ separates $G$ into two graphs $G_1$ and $G_2$ where without loss of generality $S \subseteq G_1$. (See Figure \ref{fig:45sfine}.) By Observation \ref{subcanvas}, $K[G_1]$ is an unexceptional canvas. By the minimality of $K$, it follows that $G_1$ admits an $L$-coloring $\psi$. Let $L''$ be a list assignment for $G_2$ obtained from $L$ by setting $L''(x) = \{\psi(x)\} $ for $x \in \{v_{k'+1},u_1,v,v_i\}$, and $L''(x) = L(x)$ for $x \in V(G_2) \setminus  \{v_{k'+1},u_1,v,v_i\}$. Since $\g(u_1) \geq 5$, it follows that $v_{k'+1}u_1vv_i$ is an acceptable path for $G_2$. Note that $K_2 =(G_2, L'',  v_{k'+1}u_1vv_i, A \cap V(G_2))$ is a canvas. If $K_2$ is unexceptional, it follows from the minimality of $K$ that $G_2$ admits an $L''$-colouring $\psi'$. But then $\psi \cup \psi'$ forms an $L$-colouring of $G$, a contradiction. Thus we may assume $K_2$ is an exceptional canvas.

First suppose $K_2$ is an exceptional canvas of type (i). Since $A \cap V(G_2) = \{v_{k'+2}\}$, it follows that $v_{k'+2}$ is adjacent to $v_i$. This is a contradiction, since $i \geq k'+4$ and $C$ is chordless by Lemma \ref{chordless}.

Next, suppose $K_2$ is an exceptional canvas of type (ii). Note that $\g(u_1) \geq 5$, and every vertex in a generalized wheel has girth three. Thus there exists a vertex $w \not \in \{u_1, v_{k'+1}\}$ such that $G_2$ contains a subgraph $W$ that is a generalized wheel with principal path $wvv_i$ such that the vertices on the outer cycle of $W$ are on the outer face boundary of $G_2$ and have lists of size at most three under $L''$. Moreover, there exists a vertex in $A \cap V(G_2)$ adjacent to $w$ and $v_{k'+1}$. Recall that $A \cap V(G_2) = \{v_{k'+2}\}$, and $N_G(v_{k'+2}) = \{v_{k'+1}, v_{k'+3}\}$. It follows that $w = v_{k'+3}$. But $vv_{k'+3}u_2u_1v$ is a cycle of length four and $\g(u_2) = 5$, a contradiction.

Thus we may assume that $K_2$ is an exceptional canvas of type (iii). But this too is a contradiction, since $v_{k'+1}u_1vv_i$ is a path of length three.

Thus we may assume instead that $v$ is adjacent to $u_2$. In this case, the path $v_{k'+3}u_2vv_i$ separates $G$ into two graphs $G_1$ and $G_2$ as above. Note here that $i \geq k'+5$ since $\g(u_2) \geq 5$. The argument is the same as in the previous case, except that here $V(G_2) \cap A  = \emptyset$, and so we have immediately that the canvas $(G_2, L'',  v_{k'+3}u_2vv_i, A \cap V(G_2)\})$ is unexceptional and thus that $G_2$ admits an $L''$-colouring, a contradiction.
\end{proof}

We now prove $(G', L', S, A')$ is an unexceptional canvas via the following claims.

\begin{claim} \label{3sfine}
Every vertex $v \in V(C')\setminus V(S)$ with $\g(v) = 3$ satisfies $|L'(v)| \geq 3$.
\end{claim}
\begin{proof}
Suppose not, and let $v \in V(C')\setminus V(S)$ be a counterexample. By Claim \ref{Csfine}, we have that $v \in V(\Int(C))$ and so that $|L(v)| \geq 5$. Since $|L'(v)| \leq 2$, it follows that $v$ is adjacent to at least three vertices in $V(P) \cup \{u_1, u_2\}$. It follows from Lemma \ref{P-structure-i} that $v$ is adjacent to at least one of $u_1$ and $u_2$. But this contradicts Claim \ref{notadjto2}.
\end{proof}

\begin{claim} \label{4sfine}
Every vertex $v \in V(C') \setminus V(S)$ with $\g(v) = 4$ satisfies $|L'(v)| \geq 3$.
\end{claim}
\begin{proof}
Suppose not, and let $v \in V(C')\setminus V(S)$ be a counterexample. It follows from Claim \ref{Csfine} that $v \in V(\Int(C))$ and so that $|L(v)| \geq 4$. Since $|L'(v)| \leq 2$, it follows further that $v$ is adjacent to at least two vertices in $V(P) \cup \{u_1, u_2\}$. By Lemma \ref{P-structure-i}, since $\g(v) >3$ we have that $v$ is not adjacent to two vertices in $V(P)$. Since $\g(v) = 4$, it follows that $v$ is adjacent to exactly one of $u_1$ and $u_2$ and a vertex in $V(P)$. This contradicts Claim \ref{notadjto2}.
\end{proof}

From Claims \ref{3sfine} and \ref{4sfine}, it follows that every vertex in $A'$ has girth at least five. The following lemma completes the proof that $(G', L', S, A')$ is a canvas. After this, it will remain only to show that it is unexceptional.

\begin{claim}\label{5sfine}
$A'$ is an independent set, and if $v \in A'$, then $|L'(v)| = 2$. 
\end{claim}
\begin{proof}
First, we show that every vertex in $A'$ has two colours in its list under $L'$. Suppose not; let $v \in A'$ have a list of size at most one. It follows from Claim \ref{Csfine} that $v \in A' \setminus A$, and so that $v \in V(\Int(C))$ and $|L(v)| \geq 3$. Since $|L'(v)| \leq 1$, we have that $v$ is adjacent in $G$ to at least two vertices in $V(P) \cup \{u_1, u_2\}$. Since $\g(v) \geq 5$, we have that $v$ is adjacent to at most one of $u_1$ and $u_2$. By Claim \ref{intg4s}, we have furthermore that $v$ is adjacent to at most one vertex in $V(P)$. Thus $v$ is adjacent to exactly one of $u_1$ and $u_2$, and exactly one vertex in $V(P)$. This contradicts Claim \ref{notadjto2}.

It remains to show that $A'$ is independent. Suppose not: let $v, w$ be two vertices in $A'$ with $vw \in E(G)$. Since $A$ is an independent set, it follows that at least one of $v$ and $w$ is in $A' \setminus A$. Without loss of generality, let $v \in A' \setminus A$.

First suppose $w \in A$. Note that $v$ is not adjacent to a vertex in $V(P)$, as otherwise $v$ contradicts Lemma \ref{intg4s}. Since every vertex in $A' \setminus A$ is adjacent to a vertex in $V(P) \cup \{u_1, u_2\}$, it follows that $v$ is adjacent to a vertex $u \in \{u_1, u_2\}$. If $u = u_1$, define $Q = wvu_1v_{k'+1}$. If $u = u_2$, define $Q = wvu_2v_{k'+3}$. In either case, since $\g(u_1) \geq 5$ and $\g(u_2) \geq 5$, by Lemma \ref{3-5-5-3} there exists a vertex $x \in A$ adjacent to both endpoints of $Q$. This is a contradiction, since $A$ is an independent set and $w \in A$ is an endpoint of $Q$.

Thus we may assume that both $v$ and $w$ are in the set $A' \setminus A$. Note that if each of $v$ and $w$ is adjacent to a vertex in $V(P)$, then by Lemma \ref{3-5-5-3} we have that without loss of generality $v$ is adjacent to $v_{k'+1}$ and $w$ is adjacent to $v_{k'+3}$. Since $\g(v_{k'+2}) \geq 5$, we have by Lemma \ref{sep5cycle} that $\Int(u_1v_{k'+1}v_{k'+2}v_{k'+3}u_2u_1) = \emptyset$. Thus $\{u_1, u_2\} \subseteq V(\Int(uv_{k'+1}v_{k'+2}v_{k'+3}w))$. This is a contradiction, since $\g(v_{k'+2}) \geq 5$ and so by Lemma \ref{sep5cycle} we have that $\Int(wuv_{k'+1}v_{k'+2}v_{k'+3}w) = \emptyset$. 

Thus at least one of $v$ and $w$ is adjacent to a vertex in $\{u_1, u_2\}$; moreover, since $\g(v) \geq 5$ and $\g(w) \geq 5$, exactly one of $v, w$ is adjacent to a vertex in $\{u_1, u_2\}$. Without loss of generality, we may assume $v$ is adjacent to one of $u_1$ and $u_2$, and so that there exists an index $i$ such that $w$ is adjacent to a vertex $v_i \in V(P)$. 

\vskip 4mm
\noindent
\textbf{Case 1. $i \leq k'+4$.} Note that in this case, we may assume $i = k'+4$. To see this, suppose not. First assume $v$ is adjacent to $u_2$. If $i = k'+1$, then since $\g(u_1) \geq 5$ we have by Lemma \ref{sep5cycle} that $\Int(vwv_{k'+1}u_1u_2v) = \emptyset$. Since $\Int(u_2u_1v_{k'+1}v_{k'+2}v_{k'+3}u_2) = \emptyset$ by the same lemma, it follows that $\deg(u_1) = 2$. This is a contradiction, since $|L(u_1)| \geq 3$ and $K$ is a vertex-minimum counterexample. It further follows from the fact that $\Int(u_2u_1v_{k'+1}v_{k'+2}v_{k'+3}u_2) = \emptyset$ that $i \neq k'+2$. Thus we may assume $i = k'+3$. But this is a contradiction, since $vwv_{k'+3}u_2v$ is a 4-cycle and $\g(v) \geq 5$. Symmetrical arguments show that if $v$ is adjacent to $u_1$, then $i \not \in \{k'+3, k'+2, k'+1\}$.

We may therefore assume that $i = k'+4$. If $v$ is adjacent to $u_1$, then by Lemma \ref{sep6cycle} we have that $\Int(u_1vwv_{k'+4}v_{k'+3}u_2u_1) = \emptyset$ since $\g(u_1) \geq 5$. By Lemma \ref{sep5cycle}, $\Int(u_2u_1v_{k'+1}v_{k'+2}v_{k'+3}u_2) = \emptyset$, and so $u_2$ has degree two. Since $|L(u_2)| \geq 3$ and $K$ is a vertex-minimal counterexample, this is a contradiction since every $L$-colouring of $G-u_2$ extends to an $L$-colouring of $G$.  Thus we may assume that $v$ is adjacent to $u_2$. See Figure \ref{fig:A'indep} (1) for an illustration of this case.

\begin{figure}[ht]
\tikzset{white/.style={shape=circle,draw=black,fill=white,inner sep=1pt, minimum size=7pt}}
\tikzset{starnode/.style={inner sep=1pt, minimum size=7pt, star,star points=4,star point ratio=0.5, draw, fill=white}}
\tikzset{invisible/.style={shape=circle,draw=black,fill=black,inner sep=0pt, minimum size=0.1pt}}

\begin{center}
\begin{tikzpicture}
        \node[]  at (4,8.5){\Large{\textbf{(1)}}};
        \node[white] (1) at (7,9){};
        \node[] at (7, 9.4) {$v_{k'+1}$};
        \node[starnode] (2) at (8.1,8){};
        \node[]  at (8.2, 8.4){$v_{k'+2}$};
        \node[white] (3) at (9,7){};     
        \node[]  at (9.3, 7.4){$v_{k'+3}$};
        \node[white] (4) at (10,4.5){};
        \node[]  at (10.6, 4.5){$v_{k'+4}$};
        \node[white] (6) at (6.8,5.8){}; 
        \node[]  at (6.4,5.8){$u_2$};
        \node[white] (7) at (5.7,7.25){}; 
        \node[]  at (5.3, 7.25){$u_1$};
        \node[white] (8) at (7,5.2){};  
        \node[]  at (6.6,5.1){$v$};
        \node[white] (13) at (7.25,4.5){};  
        \node[]  at (6.8,4.3){$w$};
        \node[invisible] (9) at (6,9.5){};                
        \node[invisible] (10) at (10,4){};                
        
        \draw[black] (1)--(2); 
        \draw[black] (2)--(3);
        \draw[black] (4)--(3);   
        \draw[black] (4)--(10); 
        \draw[black] (1)--(7);
        \draw[black] (1)--(9);
        \draw[black] (6)--(7);
        \draw[black] (6)--(3);  
        \draw[black] (8)--(6);   
        \draw[black] (8)--(13);
        \draw[black] (4)--(13);

\end{tikzpicture}
\vskip 6mm
\begin{tikzpicture}
        \node[]  at (4,8.5){\Large{\textbf{(2)}}};
        \node[white] (1) at (7,9){};
        \node[] at (7, 9.4) {$v_{k'+1}$};
        \node[starnode] (2) at (8.1,8){};
        \node[]  at (8.2, 8.4){$v_{k'+2}$};
        \node[white] (3) at (9,7){};     
        \node[]  at (9.3, 7.4){$v_{k'+3}$};
        \node[invisible] (11) at (9.35, 6.25){};
        \node[]  at (9.5, 6){\Large{$\ddots$}};
        \node[invisible] (12) at (9.75, 5.5){};
        \node[white] (4) at (10,4.5){};
        \node[]  at (10.5, 4.5){$v_{i-1}$};
        \node[white] (5) at (10,3.5){}; 
        \node[]  at (10.4, 3.5){$v_i$};
        \node[white] (6) at (7.25,5.65){}; 
        \node[]  at (7.25,5.25){$u_2$};
        \node[white] (7) at (5.7,7.25){}; 
        \node[]  at (5.3, 7.25){$u_1$};
        \node[white] (8) at (5,4.5){};  
        \node[]  at (4.6,4.3){$v$};
        \node[invisible] (9) at (6,9.5){};                
        \node[invisible] (10) at (10,3){};                
        \node[white] (13) at (7.25,3.5){};  
        \node[]  at (6.8,3.2){$w$};
        
        \draw[black] (1)--(2); 
        \draw[black] (2)--(3);
        \draw[black] (3)--(11);
        \draw[black] (4)--(12);   
        \draw[black] (4)--(5); 
        \draw[black] (1)--(7);
        \draw[black] (1)--(9);
        \draw[black] (6)--(7);
        \draw[black] (6)--(3);  
        \draw[black] (8)--(7);   
        \draw[black] (13)--(5);
        \draw[black] (5)--(10);
        \draw[black] (8)--(13);

\end{tikzpicture}
\begin{tikzpicture}
        \node[]  at (4,8.5){\Large{\textbf{(3)}}};
        \node[white] (1) at (7,9){};
        \node[] at (7, 9.4) {$v_{k'+1}$};
        \node[starnode] (2) at (8.1,8){};
        \node[]  at (8.2, 8.4){$v_{k'+2}$};
        \node[white] (3) at (9,7){};     
        \node[]  at (9.3, 7.4){$v_{k'+3}$};
        \node[invisible] (11) at (9.35, 6.25){};
        \node[]  at (9.5, 6){\Large{$\ddots$}};
        \node[invisible] (12) at (9.75, 5.5){};
        \node[white] (4) at (10,4.5){};
        \node[]  at (10.5, 4.5){$v_{i-1}$};
        \node[white] (5) at (10,3.5){}; 
        \node[]  at (10.4, 3.5){$v_i$};
        \node[white] (6) at (7.25,5.65){}; 
        \node[]  at (6.9,5.4){$u_2$};
        \node[white] (7) at (5.7,7.25){}; 
        \node[]  at (5.3, 7.25){$u_1$};
        \node[white] (8) at (7.5,4.5){};  
        \node[]  at (7.2,4.45){$v$};
        \node[invisible] (9) at (6,9.5){};                
        \node[invisible] (10) at (10,3){};                
        \node[white] (13) at (7.75,3.5){};  
        \node[]  at (7.3,3.45){$w$};
        
        \draw[black] (1)--(2); 
        \draw[black] (2)--(3);
        \draw[black] (3)--(11);
        \draw[black] (4)--(12);   
        \draw[black] (4)--(5); 
        \draw[black] (1)--(7);
        \draw[black] (1)--(9);
        \draw[black] (6)--(7);
        \draw[black] (6)--(3);  
        \draw[black] (8)--(6);   
        \draw[black] (13)--(5);
        \draw[black] (5)--(10);
        \draw[black] (8)--(13);

\end{tikzpicture}

\caption{Cases considered in Claim \ref{5sfine} of Lemma \ref{notctx}. Vertices in $A$ are drawn as four-pointed stars. The vertices $v$, $w$, $u_1$, and $u_2$ all have girth at least five. Recall that by Lemma \ref{sep5cycle}, since $\g(u_1)=5$ it follows that the 5-cycles in all figures have no vertices in their interior.}
    \label{fig:A'indep}
\end{center}
\end{figure}
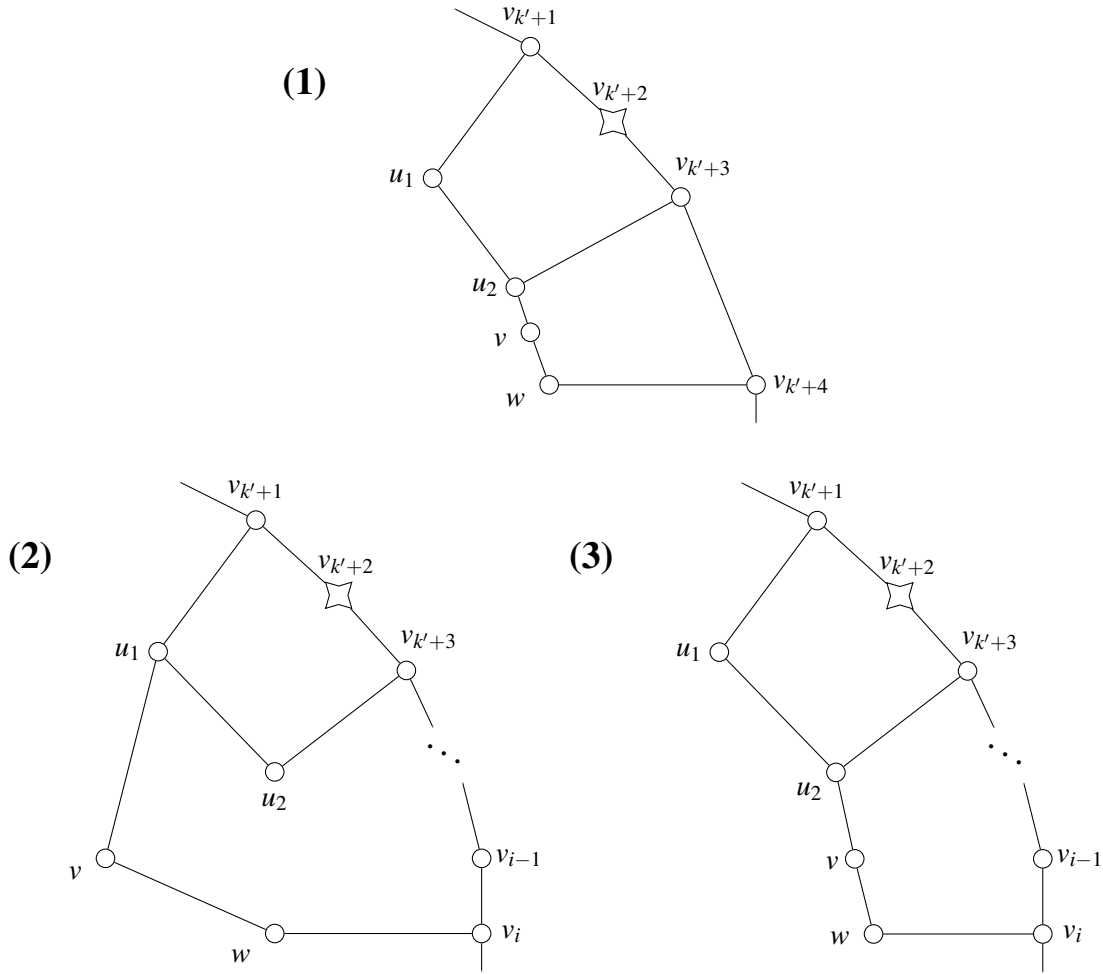

Note that by Lemma \ref{sep5cycle}, we have that $\Int(wvu_2v_{k'+3}v_{k'+4}w) = \emptyset$ since $\g(u_2) \geq 5$ and so $\deg(v_{k'+3}) = 3$ and $N_G(v_{k'+3}) = \{v_{k'+2}, u_2, v_{k'+4}\}$. Since $v_{k'+2} \in A$, we have that $\g(v_{k'+2}) \geq 5$. Furthermore, $\g(u_2) \geq 5$. It follows that $\g(v_{k'+3}) \geq 5$.

Let $\varphi$ be an $L$-colouring of $v_{k'+1}$ and $v_{k'+2}$, where $\varphi(v_{k'+1})$ is not the available colour at $v_{k'}$. Let $G'' = G - v_{k'+1} -v_{k'+2}$, and let $L''$ be a list assignment for $G''$ obtained from $L$ by setting $L''(v_{k'}) = L(v_{k'})$, and $L''(v) = L(v) \setminus \{\varphi(v_i): i \in \{k'+1, k'+2\} \textnormal{ and } v_i \in N_G(v)\}$ for all $v \in V(G'') \setminus \{v_{k'}\}$. Let $A''$ be the set of vertices in $V(G'') \setminus V(S)$ with lists of size at most two under $L''$. Note that $A'' \setminus A \subseteq (N_G(v_{k'+1}) \setminus \{v_{k'}\}) \cup \{v_{k'+3}\}$ and by definition of $P$, we have that $v_{k'+3} \in A''$. Since every vertex $v \in A'' \setminus A$ has $|L(v)| \leq 2$, it follows that $A'' \setminus (A \cup \{v_{k'+3}\})$ contains only vertices of girth at least five. Moreover, $v_{k'+3}$ also has girth at least five as mentioned above. Thus $A''$ contains only vertices of girth at least five, and so $A'' \setminus A$ is an independent set as $A'' \setminus A \subseteq N_G(v_{k'+1}) \cup \{v_{k'+3}\}$. We now argue that $A''$ is an independent set. Note that there are no edges between $A'' \setminus (A \cup \{v_{k'+3}\})$ and $A$ by Lemma \ref{intg4s}. By definition of $P$, since $i= k'+4$ we have that $v_{k'+4} \not \in A$. Moreover, since $C$ is chordless by Lemma \ref{chordless} there are no edges between $v_{k'+3}$ and vertices in $A$. Thus $A''$ is an independent set, and so  $K'' = (G'', L'', S, A'')$ is a canvas.

First suppose that $K''$ is unexceptional. Since $|V(G'')| < |V(G)|$, we have by the minimality of $K$ that $G''$ admits an $L''$-colouring $\varphi''$. But then $\varphi \cup \varphi''$ is an $L$-colouring of $G$, a contradiction. Thus we may assume $K''$ is exceptional.

Suppose that $K''$ is an exceptional canvas of type (i), and so that there exists a vertex $u \in A''$ adjacent to both $v_1$ and $v_4$. Since $K$ is unexceptional it follows that $u \in A'' \setminus A$. If $u \neq v_{k'+3}$, then $u \in N_G(v_{k'+1})$ and so $u$ contradicts Lemma \ref{intg4s}. Thus we may assume $u = v_{k'+3}$: but then $uv_4$ is a chord of $C$, contradicting Lemma \ref{chordless}.

Suppose now that $K''$ is an exceptional canvas of type (ii). Then there exists a vertex $u \in A''$ adjacent to one of $v_1$ and $v_4$ and to a vertex $y$ such that either $v_4v_3y$ or $v_1v_2y$ is the principal path of a generalized wheel $W_1$ where the vertices on the outer cycle of $W_1$ are on the outer face boundary of $G''$ and have lists of size at most three under $L''$. Note that every vertex in a generalized wheel has girth three, and for every vertex $x \in V(G'')$ with $\g_G(x) = 3$, if $|L(x)| > 3$ (and so, by Observation \ref{listsizes} if $x$ is not in $C$) then $|L(x)| \geq 5$ and so $|L''(x)| \geq 4$. It follows that every vertex in the outer cycle of $W_1$ is in the outer cycle of $G$. Since $K$ is unexceptional, we thus have that $u \in A'' \setminus A$. This is a contradiction, since there are no edges between $A'' \setminus A$ and $\{v_1, v_4\}$.

Thus we may assume that $K''$ is an exceptional canvas of type (iii) and thus that there exists a subgraph $W_2$ that is a generalized wheel with principal path $S$ such that the vertices on the outer cycle of $W_2$ are on the outer cycle of $G''$ and have lists of size at most three under $L''$. As noted above, for every vertex $x \in V(G'')$ with $\g_G(x) = 3$, if $|L(x)| > 3$ then $|L(x)| \geq 5$ and so $|L''(x)| \geq 4$. It follows that every vertex $x$ in the outer cycle of $W_2$ is in the outer cycle of $G$ and has $|L(x)| = |L''(x)|$. Since $K$ is unexceptional, this is a contradiction.

\vskip 4mm
\noindent
\textbf{Case 2.  $i \geq k'+5$.} See Figure \ref{fig:A'indep} (2), (3) for an illustration of these cases. If $v$ is adjacent to $u_1$, define $Q =  v_{k'+1}u_1vwv_i$. If $v$ is adjacent to $u_2$, define $Q =  v_{k'+3}u_2vwv_i$. In either case, the path $Q$ separates $G$ into two graphs $H_1$ and $H_2$ where without loss of generality $S \subseteq H_1$. By Observation \ref{subcanvas}, $K[H_1]$ is an unexceptional canvas. By the minimality of $K$, we therefore have that $H_1$ admits an $L$-colouring $\varphi$.

Let $L''$ be a list assignment defined by $L''(x) = L(x)$ for all $x \in V(H_2) \setminus V(Q)$, and $L''(x) = \{\varphi(x)\}$ for all $x \in V(Q)$. Note that the outer face boundary walk of $H_2$ gives a cycle formed by $Q$ and a subpath of $C$. Let $C_2$ be the outer cycle of $H_2$. By Claim \ref{P-structure-i} and the fact that $\g(w) \geq 5$, we have that $N_G(w) \cap V(P) = \{v_i\}$. Similarly, $N_G(u_1) \cap V(P) = \{v_{k'+1}\}$ and $N_G(u_2) \cap V(P) = \{v_{k'+3}\}$. Since $v$ is adjacent to one of $u_1$ and $u_2$, it follows from Claim \ref{notadjto2} that $N_G(v) \cap V(P) = \emptyset$. Finally, since $\g(x) \geq 5$ for all $x \in \{u_1, u_2, v, w\}$, it follows that $C_2$ is chordless. Since $i \geq k'+5$ and $V(P) \cap A = \{k'+2\}$, it follows from Observation \ref{listsizes} that $|L(v_{i-1})| = 3$.

Thus $H_2[V(C_2)]$ admits an $L''$-colouring $\varphi'$ obtained by extending $\varphi$ to $C_2$ by colouring the vertices of $V(C) \cap V(C_2)$ in increasing order of index. Let $H_2'$ be obtained from $H_2$ by deleting the vertices in $V(C_2) \cap V(C)$. Let $C_2'$ be the graph whose vertex- and edge-set are precisely those of the outer face boundary of $H_2'$, and let $S' = Q -C$. Let $L'''$ be the list assignment obtained from $L$ by setting $L'''(v) = L(v) \setminus \{\varphi(x): x \in N_{H_2}(v)\}$ for all $v \in V(H_2')\setminus V(S')$, and $L'''(u) = \{\varphi'(u)\}$ for all $u \in V(S')$.  Let $A_2 \subseteq V(C_2') \setminus V(S')$ be the set of vertices with lists of size at most two under $L'''$. Note that $S'$ is an acceptable path, since it has exactly three vertices. We claim $(H_2', L''', S', A_2)$ is a canvas. To see this, note that since every vertex $x \in V(C_2') \setminus V(S')$ of girth three has $|L(x)| \geq 5$ and every vertex $y \in V(C_2') \setminus V(S')$ has $|L(y)| \geq 4$, it follows from  Lemma \ref{P-structure-i} that every vertex $u \in V(C_2')\setminus V(S')$ with $\g_G(u) \in \{3,4\}$ has $|L'''(u)| \geq 3$. Thus $A_2$ contains only vertices of girth at least five in $G$. Note that $A_2 \cap A = \emptyset$, and hence $|L(u)| \geq 3$ for every $u \in A_2$. It therefore follows from Lemma \ref{P-structure-i} that every vertex in $A_2$ has a list of size exactly two under $L'''$. It remains to show that $A_2$ is an independent set. To see this, suppose not. Then there exists an edge $y_1y_2 \in E(G_2)$ with $\{y_1,y_2\} \subseteq A_2$. Note that every vertex in $A_2$ is adjacent in $G$ to a vertex in $V(P)$. Since $\g_G(y_1) \geq 5$ and $\g_G(y_2) \geq 5$, it follows from Lemma \ref{3-5-5-3} and the fact that $V(P) \cap A = \{v_{k'+2}\}$ that one of $y_1$ and $y_2$ is adjacent to $v_{k'+1}$. But since $\g_G(u_1) \geq 5$, by Lemma \ref{sep5cycle} we have that $V(\Int(u_1u_2v_{k'+3}v_{k'+2}v_{k'+1})) = \emptyset$. Thus no vertex in $A_2$ is adjacent to $v_{k'+1}$ in $G$, a contradiction.

Thus $K_2$ is a canvas. Recall that $w \in V(S')$ has girth at least five. Since $V(S') = 3$, it follows that $K_2$ is unexceptional. By the minimality of $K$, we have that $H_2'$ admits an $L'''$-colouring $\varphi''$. But then $\varphi'' \cup \varphi' \cup \varphi$ is an $L$-colouring of $G$, a contradiction. 

\end{proof}

By the previous claims, we have that $K' =(G', L', S, A')$ is a canvas. We now show it is unexceptional. 
\begin{claim}\label{unexceptional}
$K'$ is unexceptional.
\end{claim}
\begin{proof}
Suppose not. First suppose $K'$ is an exceptional canvas of type (iii).  Then $G'$ contains a generalized wheel $W$ such that the vertices on the outer cycle of $W$ are on the outer cycle of $G'$ and have lists of size at most three under $L'$. Let $v_1v_2v_3w_1 \dots w_tv_1$ be the outer cycle of $W$. Since $|L'(w_1)| = 3$, it follows that $|L(w_1)| \geq 3$. Thus $w_1 \not \in A$. Moreover, $w_1 \neq v_{k'+1}$ since $v_{k'+1} \not \in V(G')$. It follows that $w_1 \neq v_4$, and so that $w_1 \in V(\Int(C))$. Thus $|L(w_1)| \geq 5$, and so we have that $w_1$ is adjacent to at least two vertices in $V(P) \cup \{u_1, u_2\}$. Since both $u_1$ and $u_2$ have girth at least five, it follows that $w_1$ is adjacent to a vertex $v_\ell$ in $V(P)$, with $\ell \geq k'+3$. Since $w_1$ is adjacent to $v_3$, by Lemma \ref{nogenwheels} we have that $w_1$ is also adjacent to $v_{k'+1}, v_{k'+2}, \dots, v_{\ell-1}$. This contradicts the fact that $v_{k'+2} \in A$ by Lemma \ref{notnone}, and thus $\g(v_{k'+2}) \geq 5$.

Next, suppose that $K'$ is an exceptional canvas of type (i). Then there exists a vertex $u \in A'$ adjacent to $v_1$ and $v_4$. Since $K$ is unexceptional, $u \not \in A$. Thus $u \in A'\setminus A$. By Claim \ref{Csfine}, every vertex $x \in V(C) \setminus V(S)$ has $L(x) = L'(x)$, and hence $u \not \in V(C)$. It follows that $u \in V(\Int(C))$. But then $v_1uv_4$ contradicts Lemma \ref{intg4s}.

We may thus assume that $K'$ is exceptional of type (ii): and in particular, that there exists a vertex $u \in A'$ such that $u$ is adjacent to either $v_1$ or $v_4$; and such that $u$ is adjacent to a vertex $w \in V(C') \setminus V(S)$ where $v_4v_3w$  or $v_1v_2w$ is the principal path of a generalized wheel $W$ where the vertices on the outer cycle of $W$ are on the outer cycle of $G'$ and all have lists of size at most three under $L'$.  Note that since $C$ is chordless by Lemma \ref{chordless}, it follows that $w \not \in V(C)$ and so that $w \in V(\Int(C))$.

First suppose that  $u \in A$. Since $\g(w) = 3$ and $w \in V(\Int(C))$, it follows that $|L(w)| \geq 5$: thus in $G$, $w$ is adjacent to two vertices in $V(P) \cup \{u_1, u_2\}$. Note that since each of $u_1$ and $u_2$ have girth at least five, it follows that $w$ is adjacent to at most one of $u_1$ and $u_2$, and thus that $w$ is adjacent to a vertex $v_i$ in $P$. But since $u \in A$ and $w$ is adjacent to both $v_i$ and $u$, this contradicts Lemma \ref{intg4s}.

Thus we may assume that $u \in A' \setminus A$, and so that $u$ is adjacent to a vertex $x  \in V(P) \cup \{u_1, u_2\}$. If $x \in V(P)$, this contradicts Lemma \ref{intg4s} since $u$ is also adjacent to one of $v_1$ and $v_4$. Thus $u$ is adjacent to one of $u_1$ and $u_2$. 

\begin{figure}[ht]
\tikzset{white/.style={shape=circle,draw=black,fill=white,inner sep=1pt, minimum size=7pt}}
\tikzset{blacknode/.style={shape=circle,draw=black,fill=black,inner sep=1pt, minimum size=7pt}}

\tikzset{starnode/.style={inner sep=1pt, minimum size=7pt, star,star points=4,star point ratio=0.5, draw, fill=white}}
\tikzset{invisible/.style={shape=circle,draw=black,fill=black,inner sep=0pt, minimum size=0.1pt}}

\begin{center}
\hskip 6mm
\begin{tikzpicture}
        \node[invisible] (9) at (3,9.5){}; 
        \node[blacknode] (15) at (3.6,9.5){};
        \node[] at (3.6, 9.9) {$v_4$};
        \node[starnode] (14) at (5.2,9.5){};  
        \node[] at (5.2, 9.9) {$v_5$};
        \node[white] (16) at (6.4, 4.4){};     
        \node[] at (6, 4.4) {$w$};
        \node[white] (1) at (7,9){};
        \node[] at (7, 9.4) {$v_{6}$};
        \node[starnode] (2) at (8.1,8){};
        \node[]  at (8.2, 8.4){$v_{7}$};
        \node[white] (3) at (9,7){};     
        \node[]  at (9.3, 7.4){$v_{8}$};
        \node[invisible] (11) at (9.35, 6.25){};
        \node[]  at (9.5, 6){\Large{$\ddots$}};
        \node[invisible] (12) at (9.75, 5.5){};
        \node[white] (4) at (10,4.5){};
        \node[]  at (10.4, 4.5){$v_i$};
        \node[white] (6) at (7.25,5.65){}; 
        \node[]  at (7.25,5.25){$u_2$};
        \node[white] (7) at (5.7,7.25){}; 
        \node[]  at (6.1, 7.25){$u_1$};
        \node[white] (8) at (4,7.25){};  
        \node[]  at (3.7,7.2){$u$};
        \node[invisible] (10) at (10,4){};                
        
        \draw[black] (1)--(2); 
        \draw[black] (2)--(3);
        \draw[black] (3)--(11);
        \draw[black] (4)--(12);   
        \draw[black] (4)--(10); 
        \draw[black] (1)--(7);
        \draw[black] (1)--(14);
        \draw[black] (15)--(14);
        \draw[black] (15)--(9);
        \draw[black] (15)--(8);
        \draw[black] (8)--(16);
        \draw[black] (4)--(16);
        \draw[black] (6)--(7);
        \draw[black] (6)--(3);  
        \draw[black] (8)--(7);   

\end{tikzpicture}

\caption{A case considered in Claim \ref{unexceptional}. Vertices in $A$ are drawn as four-pointed stars. Both $u_1$ and $u_2$ have girth five. Recall that by Lemma \ref{sep5cycle}, since $\g(u_1)=5$ it follows that the 5-cycles in all figures have no vertices in their interior.}
    \label{fig:finalexcept}
\end{center}
\end{figure}
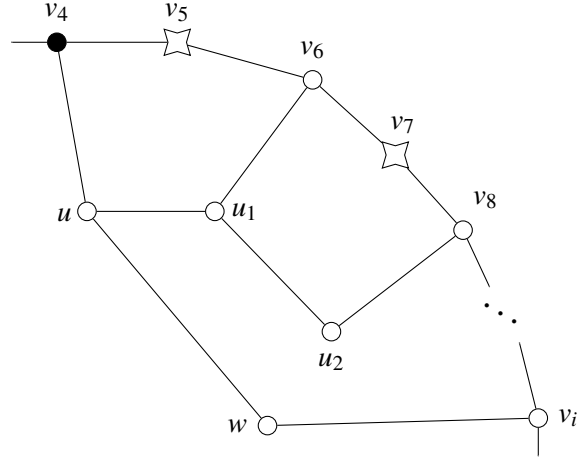

First suppose $u$ is adjacent to $u_1$. If $u$ is adjacent to $v_1$, then by Lemma \ref{3-5-5-3} applied to $u$ and $u_1$ we have that $v_1$ is adjacent to $v_{k'+2}$. Thus $q = k'+2$, a contradiction to either Lemma \ref{extraverts1} or Lemma \ref{extraverts2}. Thus we may assume that $u$ is adjacent to $v_4$, and by Lemma \ref{3-5-5-3} applied to $u$ and $u_1$, we have that $v_5 \in A$. Thus in this case $k' = k+1 = 5$. See Figure \ref{fig:finalexcept}. Note that since $|L'(w)| = 3$, we have that in $G$, $w$ is adjacent to two vertices in $V(P) \cup \{u_1, u_2\}$. Since $u_1$ has girth five and $u$ is adjacent to $u_1$, it follows that $w$ is adjacent to a vertex $v_i \in V(P)$. We claim $i \geq 8$: this follows from the fact that $u_2$ has girth five. The path $v_4uwv_i$ separates $G$ into two graphs $G_1$ and $G_2$, where without loss of generality $S \subset G_1$. By Observation \ref{subcanvas}, $K[G_1]$ is an unexceptional canvas. By the minimality of $K$, it follows that $G_1$ admits an $L$-colouring $\varphi$. Let $L''$ be the list assignment for $G_2$ obtained from $L$ by setting $L''(v) = L(v)$ for all $v \in V(G_2) \setminus \{v_4, u, w, v_i\}$ and setting $L''(v) = \{\varphi(v)\}$ for all $v \in  \{v_4, u, w, v_i\}$. Note that since $\g(u) \geq 5$, we have that $v_4uwv_i$ is an acceptable path for $G_2$. Moreover, $K_2 = (G_2, L'', v_4uwv_i, A \cap V(G_2))$ is a canvas. Since $A \cap V(G_2) = \{v_{5}, v_{7}\}$ and $i \geq 8$, it follows from the fact that $C$ is chordless (Lemma \ref{chordless}) that $K_2$ is not an exceptional canvas of type (i). Since $G$ is planar, we have that $w$ is not adjacent to $v_6$. Since $\g(u_1) \geq 5$, we have furthermore that $u$ is not adjacent to $v_6$. It follows that $K_2$ is not an exceptional canvas of type (ii). Finally, $K_2$ is trivially not an exceptional canvas of type (iii) since $v_4uwv_i$ has four vertices. Since $|V(G_2) | < |V(G)|$, it follows from the minimality of $K$ that $K_2$ admits an $L''$-colouring $\varphi''$. As $\varphi'' \cup \varphi$ is an $L$-colouring of $G$, this is a contradiction. 

We may thus assume $u$ is adjacent to $u_2$.  Recall that $w$ is adjacent to one of $v_2$ and $v_3$, and so since $G$ is planar $u$ is not adjacent to $v_1$. Thus $u$ is adjacent to $v_4$. By Lemma \ref{3-5-5-3} applied to $u_2$ and $u$, there exists a vertex in $A$ adjacent to $v_4$ and $v_{k'+3}$. Since $C$ is chordless by Lemma \ref{chordless}, this is a contradiction.
\end{proof}

Since $K$ is a minimum counterexample to Theorem \ref{345colouring} and $|V(G')| < |V(G)|$, it follows that $G'$ admits an $L'$-colouring $\phi'$. Recall that by the properties of $\phi$ described in Claim \ref{colexists}, we have that $\phi(v_{k'+1})$ is not the available colour at $v_{k'}$, and so $\phi'(v_{k'}) \neq \phi(v_{k'+1})$. But then $\phi \cup \phi'$ is an $L$-colouring of $G$, contradicting that $K$ is a counterexample to Theorem \ref{345colouring}. 
\end{proof}


\section*{Acknowledgments} 
The authors are grateful to the anonymous reviewer for helpful suggestions.

\bibliographystyle{amsplain}


\begin{aicauthors}
\begin{authorinfo}[lpos]
  Luke Postle\\
  University of Waterloo\\
  Waterloo, Ontario, Canada\\
  lpostle\imageat{}uwaterloo\imagedot{}ca \\
\end{authorinfo}
\begin{authorinfo}[esr]
  Evelyne Smith-Roberge\\
  Georgia Institute of Technology\\
  Atlanta, Georgia, USA\\
  evelyne.smithroberge\imageat{}gmail\imagedot{}com \\
\end{authorinfo}

\end{aicauthors}

\end{document}